\documentclass [12pt]{amsart}
\usepackage[utf8]{inputenc}
\usepackage[margin=1in]{geometry}
\usepackage{amsmath,amssymb,amsthm,amsfonts}

\usepackage{mathtools}%

\usepackage[english]{babel}%
\usepackage{comment}%
\usepackage[unicode]{hyperref}%
\usepackage{bbm}%
\usepackage{bm}
\usepackage{mathrsfs}%
\usepackage[shortlabels]{enumitem}
\usepackage{cleveref}
\usepackage{parskip}

\usepackage{tikz,graphicx,color}
\usepackage{tikz-cd}%
\usepackage{tikz-3dplot}%
\usetikzlibrary{calc}%
\usetikzlibrary{arrows}%
\usetikzlibrary{shapes}%
\usetikzlibrary{patterns}%
\usetikzlibrary{positioning}%
\usetikzlibrary{decorations.markings, arrows.meta}
\usetikzlibrary{arrows.meta}
\usetikzlibrary{decorations.markings}
\usetikzlibrary{knots}

\usepackage{epstopdf}%

\usepackage[arrow]{xy}%
\usepackage{diagbox}%
\usepackage[normalem]{ulem}
\usepackage{subfig}%
\usepackage{arcs}%
\usepackage{xcolor}%
\usepackage{xspace}

\usepackage{enumitem}
\usepackage{comment}
\usepackage{letltxmacro}
\usepackage{thmtools,etoolbox}

\newtheorem{theorem}{Theorem}[section]

\newtheorem{lemma}[theorem]{Lemma}
\newtheorem{proposition}[theorem]{Proposition}
\newtheorem{corollary}[theorem]{Corollary}
\theoremstyle{definition}
\newtheorem{remark}[theorem]{Remark}
\theoremstyle{definition}
\newtheorem{definition}[theorem]{Definition}

\theoremstyle{definition}

\theoremstyle{definition}
\newtheorem{example}[theorem]{Example}
\theoremstyle{definition}

\DeclareMathOperator{\Gr}{Gr}
\DeclareMathOperator{\Deo}{Deo}
\DeclareMathOperator{\DS}{DS}
\newcommand{\C}{\mathbb{C}}
\newcommand{\F}{\mathbb{F}}

\newcommand{\cR}{\mathcal R}

\newcommand{\cM}{\mathcal M}
\newcommand{\Pio}{\Pi^{\circ}}
\newcommand{\Ro}{\mathring{\mathcal R}}
\newcommand{\Rfin}{\mathring R}
\newcommand{\Bkn}{\mathbf B_{k,n}}

\tikzset{
    node/.style = {circle, draw, fill=white, inner sep=0pt, minimum size=6pt, outer sep = 2pt},
    arrow/.style = {-{Stealth[length=5pt, width=4pt]}}
    amidarrow/.style  args={#1,#2}={postaction={decorate, decoration={markings, mark=at position #2 with {\arrow[scale=1.2]{Stealth[#1]}}}}}
}
\tikzset{square/.style={regular polygon, regular polygon sides=4, draw, inner sep=0,outer sep = 2pt,minimum size=8pt}}

\title{Toric Richardson Varieties and Slice Links}
\author{Thomas C. Martinez, Matthew J. Tyler }
\date{\today}

\subjclass[2020]{Primary 14M15; Secondary 05E14, 14M25, 20F55, 57K10}
\keywords{affine Richardson variety, algebraic torus, braid Richardson variety, Richardson link, Deodhar decomposition, bounded affine permutation, Bruhat interval, solid-crossing graph}

\begin{document}

\begin{abstract}
For type $A$ braid varieties, including Richardson varieties and positroid varieties, we prove that the standard torus has a dense orbit exactly when the associated link is smoothly slice. More generally, we relate the smooth slice genus to the codimension of a generic standard-torus orbit. For knots, this codimension equals twice the slice genus. We also characterize algebraic tori among open affine Richardson varieties and positroid patches by $2$-crown avoidance in their Bruhat intervals. Finally, positroid links have equal Seifert and slice genera, so a positroid link is smoothly slice exactly when it is an unlink.
\end{abstract}

\maketitle

\section{Introduction}\label{sec:intro}

We relate standard torus orbits on braid Richardson varieties to the topology of their associated links. Our first result characterizes dense orbits by smooth sliceness.

\subsection{Standard torus orbits and slice links}

Let $\beta=(s_{i_1},\ldots,s_{i_m})$ be a positive word on $n$ strands, with Demazure product $\delta(\beta)$. We also write $\beta$ for the corresponding braid and its image $s_{i_1}\cdots s_{i_m}\in S_n$. For $u\leq\delta(\beta)$, the braid Richardson variety $\Ro_{u,\beta}$ is nonempty and carries the standard action of $T_n=(\C^*)^n/\C^*$. Its associated link is
\[
 L_{u,\beta}:=\widehat{\beta\,\beta(u)^{-1}}\subset S^3,
\]
where $\beta(u)$ is the positive braid lift of $u$ \cite[Section~10.2]{GLSBS}.

The \emph{solid multigraph} $T_{u,\beta}$ has one vertex for each strand and one edge for each omitted position of the positive distinguished $u$-subword. The edge joins the two strand labels in the selected-subword wiring, and parallel edges are retained; see Section~\ref{sec:braid} and Definition~\ref{def:solid-graph}.

Write $c(\pi)$ for the number of cycles of a permutation $\pi$, including fixed points. A link is \emph{smoothly slice} if it bounds disjoint smooth disks in $B^4$.

\begin{theorem}\label{thm:braid-standard}
    Let $u\leq \delta(\beta)$. The following are equivalent.
    \begin{enumerate}[label=\textup{(\roman*)},nosep]
        \item $\Ro_{u,\beta}$ has a dense $T_n$-orbit.
        \item $\Ro_{u,\beta}$ is a single $T_n$-orbit.
        \item $T_{u,\beta}$ is a forest.
        \item $m-\ell(u) = n-c(\beta u^{-1})$.
        \item $L_{u,\beta}$ is smoothly slice.
    \end{enumerate}
\end{theorem}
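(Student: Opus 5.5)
We prove the cycle (i)$\Leftrightarrow$(ii)$\Leftrightarrow$(iii)$\Leftrightarrow$(iv) algebraically and combinatorially, then attach (v) through a genus computation. The main tool is the Deodhar decomposition of $\Ro_{u,\beta}$. Its open cell, indexed by the positive distinguished $u$-subword, is isomorphic to $(\C^*)^{a}\times\C^{b}$ with $a+2b=m-\ell(u)$. In the positive (rightmost) distinguished subword there are no $\C$-factors, so the open cell is a torus $(\C^*)^{m-\ell(u)}$. This torus is dense since $\Ro_{u,\beta}$ is irreducible of dimension $m-\ell(u)$. The $\C^*$-coordinates are indexed by the omitted positions, i.e.\ by the edges of $T_{u,\beta}$.

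\textbf{Step 1: the torus action on the open cell.} We check that $T_n$ preserves the open Deodhar cell. On it, $T_n$ acts by characters: at an omitted position $k$ the coordinate is scaled by $t_it_j^{-1}$, where $i,j$ are the strand labels that meet at that crossing in the selected-subword wiring. This uses that conjugating the braid matrices $B_{s}(z)$ by a diagonal matrix rescales $z$ by a root character, and that the diagonal factor is transported along the wiring. Consequently the image of $T_n\to(\C^*)^{m-\ell(u)}$ is the subtorus whose character lattice is spanned by the edge vectors $e_i-e_j$ of $T_{u,\beta}$. This lattice has rank $n-c(T_{u,\beta})$, where $c(T_{u,\beta})$ is the number of connected components.

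\textbf{Step 2: the equivalences (i)--(iv).} The $T_n$-action on the open cell is free modulo a finite stabilizer onto a translate of this subtorus. Hence a generic orbit has dimension $n-c(T_{u,\beta})$. The orbit is dense iff this equals $m-\ell(u)$, the number of edges. A multigraph has (number of edges) $=$ $n-$(components) iff it is a forest, which gives (i)$\Leftrightarrow$(iii).

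For (ii), suppose $T_{u,\beta}$ is a forest. Then the characters are linearly independent, so $T_n$ acts transitively on the open cell. It remains to see that the open cell is everything. A lower-dimensional Deodhar cell would contain $\C$-factors and carry a nontrivial $\C^*$-weight structure. We argue instead that in a forest case no other distinguished subword exists: any other distinguished subword would produce a second edge set of the same size in the same connected components, forcing a cycle. If that counting proves delicate, we instead note that $\Ro_{u,\beta}$ is affine and smooth and that the dense orbit is closed because its complement has codimension $\geq1$. An affine variety containing a dense torus orbit whose boundary divisor is nonempty would have nonconstant invertible functions exceeding the rank, contradicting the unit group computation.

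For (iv), the components of $T_{u,\beta}$ are exactly the orbits of the permutation $\beta u^{-1}$. The wiring composes the subword permutation $u$ with the inverse of the full word, so strands connected by omitted crossings lie in the same cycle of $\beta u^{-1}$, and conversely. Thus $c(T_{u,\beta})=c(\beta u^{-1})$, and (iii)$\Leftrightarrow$(iv) by the edge count.

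\textbf{Step 3: sliceness (v).} The link $L_{u,\beta}$ is the closure of the positive braid obtained from $\beta$ by omitting, in effect, the $\ell(u)$ selected crossings. Concretely, we realize it as the closure of $\beta\beta(u)^{-1}$ after cancellation, so that it is a quasipositive link with $m-\ell(u)$ bands on $n$ strands. Its number of components is $c(\beta u^{-1})$. Rudolph's slice-Bennequin inequality (Kronheimer--Mrowka) gives $\chi_4(L)=n-(m-\ell(u))$, since the ribbon surface from the bands is genus-minimizing. The link is slice iff $\chi_4$ equals the number of components $c(\beta u^{-1})$, which is exactly (iv).

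The main obstacle is justifying the quasipositive band presentation of $\beta\beta(u)^{-1}$ with exactly $m-\ell(u)$ bands. We expect to do this by inductively canceling each selected crossing of the positive distinguished subword against $\beta(u)^{-1}$, using that positivity of the subword makes each cancellation a conjugation.
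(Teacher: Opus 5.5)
Your proposal has two genuine gaps, both in the combinatorial steps; the (i)$\Leftrightarrow$(iii) weight computation on $D_+$ and the Rudolph step for (iv)$\Leftrightarrow$(v) match the paper. In Step~3, the link is not the closure of a positive braid on the omitted crossings. The quasipositive factors are the conjugates $\beta(u_{j-1})\sigma_{i_j}\beta(u_{j-1})^{-1}$ over omitted $j$. This comes from telescoping $\sigma_{i_1}\cdots\sigma_{i_j}\,\beta(u_j)^{-1}$, using that the selected letters of $A_+$ are reduced, so it is not the obstacle you expect.

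\textbf{Dense orbit to single orbit.} The step that turns a dense orbit into a single orbit is not established. A distinguished subword $A\neq A_+$ has only $e(A)=d-2b(A)<d$ omitted positions, so there is no ``second edge set of the same size,'' and nothing forces a cycle. Your fallback also fails:
\begin{itemize}
\item an open orbit is not closed merely because its complement has positive codimension;
\item units of $\Ro_{u,\beta}$ restrict injectively to units of the open orbit, so a boundary can only lower the unit rank, never raise it;
\item $\C^*\subset\C$ is a smooth affine variety with a dense torus orbit and nonempty boundary.
\end{itemize}
The missing idea is a reflection-length bound valid for \emph{every} subword $A$ with product $u$. Induction on $W_ja_{(j)}^{-1}$, where $W_j=s_{i_1}\cdots s_{i_j}$, gives
\[
 \beta u^{-1}=\prod_{j:\,a_j=e}^{\longrightarrow}a_{(j-1)}s_{i_j}a_{(j-1)}^{-1}.
\]
This is a product of $e(A)=d-2b(A)$ transpositions, so $n-c(\beta u^{-1})\leq d-2b(A)$. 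Under (iv) this forces $b(A)=0$, hence $A=A_+$ and $\Ro_{u,\beta}=D_+$. On $D_+$ the forest weights are independent, so the action is transitive.

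\textbf{The claimed identity $c(T_{u,\beta})=c(\beta u^{-1})$ is false.} Take $\beta=(s,s)$ and $u=e$ in $S_2$. Both positions are omitted, so $T_{u,\beta}$ consists of two parallel edges joining $1$ and $2$, which is one component. However $\beta u^{-1}=e$ has two cycles, and $L_{u,\beta}$ is the positive Hopf link. So strands joined by omitted crossings need not lie in one cycle. Only the inequality $c(\beta u^{-1})\geq\kappa$ holds in general, where $\kappa$ is the number of components of $T_{u,\beta}$. It follows because the $A_+$ case of the displayed identity writes $\beta u^{-1}$ as the ordered product of the edge transpositions $\tau_j$, and this product preserves each graph component.

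That inequality, together with $d\geq n-\kappa$, gives (iv)$\Rightarrow$(iii). For (iii)$\Rightarrow$(iv) you need a separate argument. In a forest, each $\tau_j$ taken in order joins two different components of the graph built from the earlier edges. The partial product preserves those components, so $\tau_j$ merges two distinct cycles, and starting from $n$ cycles you end with $n-d$. In general $c(\beta u^{-1})-\kappa$ can be positive; it is exactly the correction term in the genus--codimension formula.
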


The cycles of $\beta u^{-1}$ correspond to the components of $L_{u,\beta}$, so condition \textup{(iv)} compares the dimension of the variety with the number of link components. We call a variety with a dense standard-torus orbit \emph{standard toric}; the action need not be effective.

For finite Richardson varieties, the standard diagonal action is well understood. Can--Saha \cite[Theorem~4.7]{CanSaha} characterize standard toricity across finite types by linear independence of the roots in the positive Deodhar parametrization. In type $A$, Tsukerman--Williams \cite[Theorem~4.6 and Proposition~4.12]{tsukerman2015bruhat} compute the generic orbit dimension from a maximal-chain graph and obtain a forest criterion; see also Lee--Masuda--Park \cite{lee2021toric}.

Theorem~\ref{thm:braid-standard} extends the graph criterion to arbitrary positive type $A$ words and identifies standard toricity with sliceness of the associated link.

The proof uses the omitted positions of the positive distinguished subword in two ways. They give the weights of the standard action on the positive Deodhar torus and a quasipositive factorization of the link braid with $d=m-\ell(u)$ bands. The weights form an incidence matrix of $T_{u,\beta}$. If the graph is a forest, reflection length rules out every other Deodhar stratum. The dense orbit then fills the variety.

The corresponding \emph{ribbon surface} $S_{u,\beta}\subset B^4$ is the quasipositive disk-and-band surface associated with \eqref{eq:braid-quasipositive}. It has $n$ disks and $d=m-\ell(u)$ bands, with boundary $L_{u,\beta}$. In the 3D plabic-graph picture of \cite[Section~10.2]{GLSBS}, we resolve the ribbon intersections by pushing the interiors into the fourth coordinate while keeping the boundary fixed. The 3D plabic graph is a spine of the surface. Contracting each wiring strand to a point gives $T_{u,\beta}$. These strands are disjoint trees, so the contraction is a homotopy equivalence and $b_1(S_{u,\beta})=b_1(T_{u,\beta})$. Figure~\ref{fig:3D-graph-T-and-Link} shows the graph and the associated link.

\begin{figure}
    \centering
    \begin{tikzpicture}[vertex/.style={circle,draw,fill=white,inner sep=0pt,minimum size=15pt,font=\small},overcross/.style={preaction={draw, white, line width=3.8pt, -}},link/.style={orange!90!black, very thick, -{Stealth[length=2.5mm]}},or/.style={orange!90!black, very thick},scale=1.5, line width=1pt]
     \foreach \y/\label in {0/1, 0.5/2, 1.0/3, 1.5/4} {
        \draw  (0,\y) -- (1,\y);
        \draw (1,\y) -- (1.5,\y);
        \node[left] at (0,\y) {$\label$};
        \node[right,black] at (3,\y) {
        };}

        \draw[thick] (1.5,1.0) to [out=0,in=180] (2.0,0.5);
   \draw[color=white, line width=5] (1.5,0.5) to [out=0,in=180] (2.0,1.0);
   \draw[thick] (1.5,0.5) to [out=0,in=180] (2.0,1.0);

     \draw (1.5,0) -- (3,0);
       \draw (2.0,0.5) -- (3,0.5);
      \draw  (2.0,1) -- (3,1);
       \draw (1.5,1.5) -- (3,1.5);

    \filldraw[fill=black] (0.75, 0.) circle (2pt); 
       \draw (0.75, 0.5) -- (0.75, 0.);
    \filldraw[fill=white] (0.75, 0.5) circle (2pt);
    
    \filldraw[fill=black] (0.75, 1.) circle (2pt); 
       \draw (0.75, 1.5) -- (0.75, 1.);
    \filldraw[fill=white] (0.75, 1.5) circle (2pt);
    
    \filldraw[fill=black] (2.5, 0.) circle (2pt); 
       \draw (2.5, 0.5) -- (2.5, 0.);
    \filldraw[fill=white] (2.5, 0.5) circle (2pt);

    \node[vertex] (1) at (5,1.75) {$1$};
    \node[vertex] (2) at (6,1) {$2$};
    \node[vertex] (3) at (5,0.25) {$3$};
    \node[vertex] (4) at (4,1) {$4$};

    \draw (1) to (2);
    \draw (1) to (3);
    \draw (3) to (4);

    \def\off{6.5}
    \draw[link,overcross] (\off+0.4,0.1) to [out=0,in=180] (\off+1.1,0.6);
    \draw[link,overcross] (\off+0.4,0.6) to [out=0,in=180] (\off+1.1,0.1);
    
    \draw[link,overcross](\off+1.1,0.4) to (\off+0.4,0.4);
    \draw[link,overcross] (\off+1.1,-0.1) to (\off+0.4,-0.1);

    \draw[link,overcross] (\off+0.4,1.1) to [out=0,in=180] (\off+1.1,1.6);
    \draw[link,overcross] (\off+0.4,1.6) to [out=0,in=180] (\off+1.1,1.1);
    
    \draw[link,overcross](\off+1.1,1.4) to (\off+0.4,1.4);
    \draw[link,overcross] (\off+1.1,0.9) to (\off+0.4,0.9);

    \draw[link,overcross] (\off+1.4,0.6) to [out=0,in=180] (\off+2.1,1.1);
    \draw[link,overcross] (\off+1.4,1.1) to [out=0,in=180] (\off+2.1,0.6);

    \draw[link,overcross] (\off+2.1,0.4) to [out=180,in=0] (\off+1.4,0.9);
    \draw[link,overcross] (\off+2.1,0.9) to [out=180,in=0] (\off+1.4,0.4);

    \draw[link,overcross] (\off+2.4,0.1) to [out=0,in=180] (\off+3.1,0.6);
    \draw[link,overcross] (\off+2.4,0.6) to [out=0,in=180] (\off+3.1,0.1);
    
    \draw[link,overcross](\off+3.1,0.4) to (\off+2.4,0.4);
    \draw[link,overcross] (\off+3.1,-0.1) to (\off+2.4,-0.1);

    \draw[or](\off+2.4,-0.1) to (\off+1.1,-0.1);
    \draw[or](\off+2.4,0.1) to (\off+1.1,0.1);
    \draw[or](\off+1.4,0.4) to (\off+1.1,0.4);
    \draw[or](\off+1.4,0.6) to (\off+1.1,0.6);
    \draw[or](\off+2.4,0.4) to (\off+2.1,0.4);
    \draw[or](\off+2.4,0.6) to (\off+2.1,0.6);

    \draw[or](\off+1.4,1-0.1) to (\off+1.1,0.9);
    \draw[or](\off+1.4,1.1) to (\off+1.1,1.1);
    
    \draw[or](\off+3.1,1-0.1) to (\off+2.1,0.9);
    \draw[or](\off+3.1,1.1) to (\off+2.1,1.1);
    
    \draw[or](\off+3.1,1.5-0.1) to (\off+1.1,1.4);
    \draw[or](\off+3.1,1.6) to (\off+1.1,1.6);

    \draw[or](\off+0.4,0.1) to [out=180, in=180] (\off+0.4,-0.1);
    \draw[or](\off+0.4,0.6) to [out=180, in=180] (\off+0.4,0.4);
    \draw[or](\off+0.4,1.1) to [out=180, in=180] (\off+0.4,0.9);
    \draw[or](\off+0.4,1.6) to [out=180, in=180] (\off+0.4,1.4);
    
    \draw[or](\off+3.1,0.1) to [out=0, in=0] (\off+3.1,-0.1);
    \draw[or](\off+3.1,0.6) to [out=0, in=0] (\off+3.1,0.4);
    \draw[or](\off+3.1,1.1) to [out=0, in=0] (\off+3.1,0.9);
    \draw[or](\off+3.1,1.6) to [out=0, in=0] (\off+3.1,1.4);

    \end{tikzpicture}
    \caption{The 3D plabic graph,  $T_{u,\beta}$, and the braid closure for $u=s_2$, $\beta=s_1s_3s_2s_1$.}
    \label{fig:3D-graph-T-and-Link}
\end{figure}

For an oriented link $L$ with $c$ components, let $\chi_4(L)$ be the maximum Euler characteristic of a smooth, compact, oriented surface in $B^4$ with boundary $L$, and let $\chi_3(L)$ be the analogous maximum for Seifert surfaces in $S^3$. Surfaces may be disconnected but have no closed components. We use the normalization
\[
 g_i(L)=\frac{c-\chi_i(L)}2,\qquad i\in\{3,4\}.
\]
For knots these are the usual genera. For links, $g_4(L)=0$ means that $L$ bounds disjoint smooth disks, and $g_3(L)=0$ means that $L$ is an unlink. These are not connected-surface genera: the positive Hopf link has $g_3=g_4=1$, although it bounds an annulus.

The graph also measures how far a generic orbit is from being dense. Let $\kappa=\#\pi_0(T_{u,\beta})$ be the number of graph components, let $c=c(\beta u^{-1})$ be the number of link components, and set
\[
 C_{u,\beta}:=\dim\Ro_{u,\beta}-\dim(T_n\cdot x)
\]
for a general point $x\in\Ro_{u,\beta}$.

\begin{theorem}\label{thm:genus-codimension}
Let $\beta$ be a positive word of length $m$ on $n$ strands, let
$u\leq\delta(\beta)$, and put $d=m-\ell(u)$. Then
\begin{equation}\label{eq:genus-codimension}
 C_{u,\beta}=d-n+\kappa=b_1(T_{u,\beta}),
 \qquad
 2g_4(L_{u,\beta})=C_{u,\beta}+c-\kappa,
\end{equation}
where $b_1(T_{u,\beta})$ is the first Betti number of the graph. In particular,
\[
 g_4(L_{u,\beta})\leq C_{u,\beta}
 \leq 2g_4(L_{u,\beta}).
\]
If $L_{u,\beta}$ is a knot, then
$C_{u,\beta}=2g_4(L_{u,\beta})=2g(S_{u,\beta})$.
\end{theorem}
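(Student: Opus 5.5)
The argument splits into an algebraic half, $C_{u,\beta}=b_1(T_{u,\beta})$, and a topological half, $2g_4=b_1+c-\kappa$. Both rest on the two uses of the omitted positions described in the introduction: they give the weights of the $T_n$-action on the positive Deodhar torus, and they give the quasipositive factorization of $\beta\,\beta(u)^{-1}$ with $d=m-\ell(u)$ bands.

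\emph{Step 1 (orbit codimension).} The positive Deodhar stratum is open and dense in $\Ro_{u,\beta}$ and is isomorphic to $(\C^*)^d$, with coordinates indexed by the omitted positions. I will check that $T_n$ acts on it through characters. The coordinate at an omitted position whose wiring strands are $a,b$ is scaled by $t_a t_b^{-1}$, possibly up to sign. So the weight matrix is the (signed) incidence matrix of $T_{u,\beta}$, with one row per edge. A general point of the dense torus has orbit dimension equal to the rank of this matrix. For a graph on $n$ vertices with $\kappa$ components, this rank is $n-\kappa$. Hence
\[
C_{u,\beta}=d-(n-\kappa)=b_1(T_{u,\beta}),
\]
using that $\dim\Ro_{u,\beta}=d$ and the Euler formula $b_1=d-n+\kappa$.

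\emph{Step 2 (upper bound on $g_4$).} The ribbon surface $S_{u,\beta}$ has boundary $L_{u,\beta}$. Its Euler characteristic is
\[
\chi(S_{u,\beta})=n-d,
\]
equivalently $\chi=\kappa-b_1$ via the homotopy equivalence with $T_{u,\beta}$. It has no closed components. Therefore
\[
2g_4\le c-\chi(S_{u,\beta})=c-n+d=b_1+c-\kappa.
\]

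\emph{Step 3 (lower bound; main obstacle).} Sharpness needs a lower bound on $g_4$, and for this I would use the slice-Bennequin inequality for quasipositive braids (Rudolph, via Kronheimer--Mrowka). A quasipositive surface of $n$ disks and $d$ bands realizes $\chi_4$, i.e.\ it maximizes Euler characteristic among surfaces in $B^4$. Here surfaces may be disconnected but have no closed components. The disconnected normalization needs care: for a split union of quasipositive pieces, apply the bound to each piece, and argue that any optimal surface can be taken with no closed components. This gives $\chi_4=n-d$, and with it the second identity in \eqref{eq:genus-codimension}.

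\emph{Step 4 (consequences).} Each component of $S_{u,\beta}$, equivalently each component of $T_{u,\beta}$, has at least one boundary component. This gives $c\ge\kappa$, and hence $C\le 2g_4$. For the other inequality I need $c-\kappa\le b_1$. The boundary components of a connected surface number at most $b_1+1$, so for each component $S_i$ we get $c_i\le b_1(S_i)+1$. Summing gives $c-\kappa\le b_1$, so $2g_4\le 2C$. For a knot we have $c=1$. The surface is then connected, so $\kappa=1$, and therefore $C=2g_4$. Its genus satisfies
\[
b_1(S_{u,\beta})=2g(S_{u,\beta}),
\]
which gives the final equality.
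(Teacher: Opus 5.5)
Your proposal is correct. Steps~1--3 follow the paper: the weights on $D_+$ form the incidence matrix of $T_{u,\beta}$ (Proposition~\ref{prop:braid-weights}), and Rudolph's theorem for the quasipositive factorization \eqref{eq:braid-quasipositive} gives $\chi_4=n-d$ (Lemma~\ref{lem:braid-quasipositive}). The extra care you plan in Step~3 is not needed. Rudolph's invariant is already the maximum over possibly disconnected surfaces without closed components, which is exactly the paper's $\chi_4$. A reduction over split unions would in any case only address split links.

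The real difference is in Step~4. The paper proves $\kappa\le c\le\kappa+C_{u,\beta}$ inside $S_n$. It multiplies the edge transpositions of \eqref{eq:hollow-reflection-factorization} in order: the $n-\kappa$ edges that join new graph components each merge two cycles, and each of the remaining $C_{u,\beta}$ edges raises the cycle count by at most one. You instead read both bounds off the ribbon surface. You use $\pi_0(S_{u,\beta})=\pi_0(T_{u,\beta})$, $b_1(S_{u,\beta})=b_1(T_{u,\beta})$, and the fact that a connected compact surface $S_i$ has between $1$ and $b_1(S_i)+1$ boundary circles.

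The paper's route is self-contained combinatorics. It reuses the factorization behind Lemma~\ref{lem:hollow-forest} and never needs to know which disks a band joins. Yours depends on the spine identification from the introduction (band $j$ joins disks $a_j$ and $b_j$). In return it gives an exact identity. With $g(S_{u,\beta})$ the total genus of the components, $b_1(S_{u,\beta})=2g(S_{u,\beta})+c-\kappa$, hence $g_4(L_{u,\beta})=C_{u,\beta}-g(S_{u,\beta})$. The two inequalities then say simply $0\le 2g(S_{u,\beta})\le C_{u,\beta}$, with equality on the right exactly when $c=\kappa$, in particular for knots.
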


For knots, the graph is connected and the ribbon surface $S_{u,\beta}$ is a minimal slice surface. For links, the correction term $c-\kappa$ is the difference between the numbers of link components and graph components. In both cases, the inequalities show that orbit codimension is zero exactly when the link is smoothly slice. The topological input is Rudolph's Euler-characteristic formula for quasipositive links \cite[Section~3]{Rudolph1993}. Theorem~\ref{thm:genus-codimension} expresses this formula in terms of the standard torus action.

For a reduced word, these results apply to ordinary Richardson varieties and their links; see Corollary~\ref{cor:finite-slice}. When the upper endpoint is Grassmannian, the open Richardson variety projects isomorphically to an open positroid variety $\Pio_f$, and its link is the positroid link $\Lambda_f$.

Write $\mathbf B_{k,n}$ for the $(k,n)$-bounded affine permutations, which index the open positroid varieties in $\Gr(k,n)$. For $f\in\mathbf B_{k,n}$ and $I\in\binom{[n]}k$, the corresponding Pl\"ucker patch is
\[
 \Pio_{f,I}:=\Pio_f\cap\{\Delta_I\neq0\}.
\]
Positroid links admit positive grid diagrams. This positivity implies that their Seifert and slice genera agree, so the slice criterion becomes an unlink criterion.

\begin{theorem}
\label{thm:intro-positroid-link} For every $f\in\mathbf B_{k,n}$, the positroid link $\Lambda_f$
satisfies
\[
g_3(\Lambda_f)=g_4(\Lambda_f).
\]
Consequently, the following are equivalent.
\begin{enumerate}[label=\textup{(\roman*)}]
 \item $\Pio_f$ is a single standard-torus orbit.
 \item Some, equivalently every, nonempty patch $\Pio_{f,I}$ is a single standard-torus orbit.
 \item $\Lambda_f$ is smoothly slice.
 \item $\Lambda_f$ is an unlink.
\end{enumerate}
\end{theorem}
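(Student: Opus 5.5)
The proof has two parts. First we establish $g_3(\Lambda_f)=g_4(\Lambda_f)$. Then the equivalences follow from Theorem~\ref{thm:braid-standard} and the compatibility of patches with the torus action.

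\emph{Genus equality.} The inequality $g_4\le g_3$ is automatic: push a Seifert surface into $B^4$. For the reverse inequality, we realize $\Lambda_f$ as the closure of a braid of the form $\beta\,\beta(u)^{-1}$. Here $\beta$ is a reduced word for a Grassmannian-type upper endpoint $w$ with $u\le w$, and $\Pio_f\cong\Ro_{u,w}$. Theorem~\ref{thm:genus-codimension} then computes $\chi_4(\Lambda_f)=\chi(S_{u,\beta})=n-d$, with $d=\ell(w)-\ell(u)$. What remains is to produce a Seifert surface in $S^3$ with the same Euler characteristic $n-d$.

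This is where we use the positive grid diagram of positroid links. We would show that $\Lambda_f$ has a diagram that, after isotopy, is the closure of a \emph{positive} braid with $N$ strands and $M$ crossings, with $N-M=n-d$. For a positive braid closure, Seifert's algorithm gives a surface of Euler characteristic $N-M$. Bennequin's inequality (or Rudolph's slice-Bennequin inequality) then gives
\[
\chi_4\le N-M\le\chi_3\le\chi_4 .
\]
This forces equality throughout. The main obstacle is the combinatorial bookkeeping. We must check that the grid diagram really yields a positive braid, perhaps after conjugating or cancelling the $\beta(u)^{-1}$ part against the Grassmannian structure of $w$ in the Richardson-to-positroid dictionary. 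We must also check that its Euler characteristic $N-M$ matches $n-d$. A fallback that avoids exact counting is to show $\Lambda_f$ is a positive link, hence strongly quasipositive. Then Rudolph's theorem gives that the quasipositive surface built from the canonical Seifert surface realizes $\chi_4$, and $\chi_3=\chi_4$ follows.

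\emph{Equivalences.} By Theorem~\ref{thm:braid-standard} applied to $\Ro_{u,w}\cong\Pio_f$, condition (i) is equivalent to (iii). This uses that the isomorphism is $T_n$-equivariant, as is standard for the projection from the flag variety. The equivalence (iii)$\Leftrightarrow$(iv) follows from genus equality: $g_4=0$ forces $g_3=0$, and the normalization records that $g_3=0$ means an unlink; the converse is trivial.

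For (ii), each patch $\Pio_{f,I}$ is a $T_n$-stable open subset, because $\Delta_I$ is a torus weight vector. If nonempty, it is dense, since $\Pio_f$ is irreducible. If $\Pio_f$ is a single orbit, every nonempty patch is $T_n$-stable and nonempty, hence equals the whole orbit. Conversely, if some nonempty patch is a single orbit, that orbit is open and dense in $\Pio_f$, so (i) of Theorem~\ref{thm:braid-standard} holds. By (ii) of that theorem, $\Pio_f$ is then a single orbit. This gives the "some, equivalently every" clause.
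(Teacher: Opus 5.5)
Your handling of the equivalences matches the paper's: the $T_n$-equivariant Richardson--positroid isomorphism with Theorem~\ref{thm:braid-standard} for (i)$\Leftrightarrow$(iii), the genus equality for (iii)$\Leftrightarrow$(iv), and $T_n$-stability and density of nonempty patches for (ii).

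The problem is your primary route to $g_3=g_4$. It needs $\Lambda_f$ to be the closure of a \emph{positive braid}, and you give no argument for this. The grid presentation yields a positive \emph{diagram}, not a braid closure. Positive diagrams cannot in general be braided positively: braiding by Vogel moves creates negative crossings, and the Perko knot of Section~\ref{subsec:positive-genus} has a positive diagram but no positive braid presentation. So the step you flag as the main obstacle is a genuine gap, not bookkeeping. Two smaller points: the count $N-M=n-d$ is not needed for your chain of inequalities, and $\chi_4\le N-M$ requires the slice--Bennequin inequality, not Bennequin's.

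Your fallback closes the gap, and it is exactly the paper's proof (Corollary~\ref{cor:positroid-genera}). It runs in three steps:
\begin{enumerate}
\item Lemma~\ref{lem:positroid-positive} checks that every crossing is positive in the grid diagram with all $X$-markings on the diagonal. Vertical segments pass over horizontal ones, and both orientations reverse across the diagonal.
\item Rudolph's theorem makes the Seifert-algorithm surface $S$ of a positive diagram a quasipositive Seifert surface.
\item Rudolph's lemma gives $\chi_4(\Lambda_f)=\chi(S)$, so $\chi(S)\le\chi_3\le\chi_4=\chi(S)$.
\end{enumerate}
Commit to that route, and include the crossing-sign check rather than treating positivity of the grid diagram as given. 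The detour through Theorem~\ref{thm:genus-codimension} then becomes unnecessary.
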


We prove Theorem~\ref{thm:intro-positroid-link} in Section~\ref{sec:finite}. The equivalent criterion that a reduced plabic graph is a forest is due to \L{}ukowski--Parisi--Williams \cite[Propositions~3.15--3.16]{lukowski2023positive}; see Proposition~\ref{prop:plabic-forest}. This plabic graph differs from the solid multigraph used in the braid criterion.

\subsection{Intrinsic tori and affine patches}

The choice of torus action matters. A variety can be isomorphic to an algebraic torus without having a dense orbit under the standard action. We call a variety isomorphic to $(\C^*)^d$ an \emph{intrinsic torus}. An intrinsic torus that is not standard toric is \emph{unexpected}. This broadens the terminology of Gorsky--Kim--Sherman-Bennett \cite{unexpected}, who use the term for tori of dimension greater than $n-1$.

For $v\leq w$ in $S_n$, write $\Rfin_{v,w}$ and $\overline R_{v,w}$ for the open and closed Richardson varieties. Gorsky--Kim--Sherman-Bennett \cite[Theorem~1.2]{unexpected} prove that $\Rfin_{v,w}$ is an algebraic torus exactly when $\overline R_{v,w}$ is toric for some torus. They also show that these conditions are equivalent to $[v,w]$ containing no $2$-crown, or equivalently to $[v,w]$ being a lattice. Here a $2$-crown is a rank-three Bruhat interval isomorphic to the Bruhat order on $S_3$.

Bossinger--Gorsky--Simental \cite[Theorems~1.4 and~1.8]{bossinger2026torus} characterize intrinsic tori among braid varieties using double-root-free words and extend the finite Richardson criterion to finite crystallographic types. The torus supplied by this intrinsic structure can be larger than the standard diagonal torus. The link criteria above concern the standard action.

Our intrinsic criterion applies to affine type $A$ and, through affine Richardson varieties, to individual positroid patches. Let $f\leq g$ be affine permutations in the same component of the extended affine symmetric group, and let $\Ro_{f,g}$ be the corresponding open Richardson variety in the affine flag variety. We reserve $R_{f,g}(q)$ for its Kazhdan--Lusztig $R$-polynomial.

\begin{theorem}\label{thm:main}
    Let $f\leq g$ be affine permutations. The following are equivalent.
    \begin{enumerate}[label=\textup{(\roman*)}]
        \item $\Ro_{f,g}\cong(\C^*)^{\ell(g)-\ell(f)}$ as an algebraic variety.
        \item The affine Bruhat interval $[f,g]$ contains no 2-crown.
        \item $R_{f,g}(q)=(q-1)^{\ell(g)-\ell(f)}$.
        \item The Deodhar decomposition of $\Ro_{f,g}$ has a single stratum.
        \item For some, hence every, reduced word of $g$, the positive distinguished subexpression of $f$ is its only distinguished subexpression.
    \end{enumerate}
\end{theorem}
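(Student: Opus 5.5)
Fix a reduced word $\mathbf g$ for $g$ in the affine Weyl group. For the extended group, write $g=g'\tau$ with $\tau$ of length zero; since $f\le g$ forces $f=f'\tau$, we may reduce to the Coxeter group. The hub is the Deodhar decomposition of $\Ro_{f,g}$ relative to $\mathbf g$. It applies verbatim in the affine flag variety because the Bruhat cells $BwB/B$ are finite-dimensional and the Richardson variety sits in a finite-dimensional Schubert variety. The decomposition reads
\[
\Ro_{f,g}=\bigsqcup_{\mathbf v}\mathcal D_{\mathbf v},\qquad \mathcal D_{\mathbf v}\cong(\C^*)^{a(\mathbf v)}\times\C^{b(\mathbf v)},
\]
where $\mathbf v$ runs over the distinguished subexpressions for $f$ in $\mathbf g$. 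Here $a(\mathbf v)$ counts the non-selected positions that are not ``forced'', and $b(\mathbf v)$ counts the descending positions. The positive distinguished subexpression has $b=0$ and $a=\ell(g)-\ell(f)$, and gives the open dense stratum. Counting points over $\F_q$ yields Deodhar's formula
\[
R_{f,g}(q)=\sum_{\mathbf v}(q-1)^{a(\mathbf v)}q^{b(\mathbf v)}.
\]

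First I establish (iv)$\Leftrightarrow$(v)$\Leftrightarrow$(iii), which is bookkeeping from the formula.
\begin{itemize}
\item Strata correspond to distinguished subexpressions, and the strata with $b(\mathbf v)>0$ are nonempty. Hence (iv)$\Leftrightarrow$(v) for the chosen word.
\item If only the positive subexpression occurs, then $R_{f,g}=(q-1)^{\ell(g)-\ell(f)}$, which gives (v)$\Rightarrow$(iii).
\item Conversely, every summand is a polynomial with positive leading coefficient of degree $a+b\le\ell(g)-\ell(f)$. Any other subexpression has $a<\ell(g)-\ell(f)$, since it has at least one descent, which also forces $a+b=\ell(g)-\ell(f)-b$. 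Evaluating at $q=1$ shows extra summands with $a=0$ contribute positively. More cleanly, $(q-1)^{a}q^{b}$ with $b\ge1$ is nonzero at $q=1$ when $a=0$. Otherwise, compare the $\tilde R$-polynomial, which has nonnegative coefficients and for which extra terms strictly increase a coefficient.
\item Since $R_{f,g}$ is independent of the word, so is the condition. This justifies ``some, hence every'' in (v).
\end{itemize}

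Next, (iv)$\Rightarrow$(i) is immediate, since the single stratum is $(\C^*)^{\ell(g)-\ell(f)}$. For (i)$\Rightarrow$(iii), if $\Ro_{f,g}\cong(\C^*)^d$ over $\C$, I spread out to a finitely generated ring. Counting points over large finite fields gives $\#\Ro_{f,g}(\F_q)=(q-1)^d$ for infinitely many $q$, at least after a finite extension. Since the $R$-polynomial counts $\F_q$-points, the two polynomials agree. The spreading-out step needs care: the isomorphism is defined over some finitely generated $\mathbb Z$-algebra, whose residue fields of large characteristic are enough.

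Finally, for (ii)$\Leftrightarrow$(iii) I use the combinatorial characterization of $R$-polynomials. Dyer's theorem says the $R$-polynomial, equivalently the complete $cd$-index or $\tilde R$, of a Bruhat interval in any Coxeter group depends only on the poset. I will show $\tilde R_{f,g}(q)=q^{\ell(g)-\ell(f)}$, that is $R=(q-1)^{\ell}$, iff no subinterval is a 2-crown.
\begin{itemize}
\item If $[x,y]\subseteq[f,g]$ is a 2-crown, then $R_{x,y}=(q-1)^3+q(q-1)$. I want to propagate this upward using the standard recursion in $q$ on a descent $s$ of $g$. When $sf<f$, we have $R_{f,g}=R_{sf,sg}$. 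Otherwise, $R_{f,g}=(q-1)R_{f,sg}+qR_{sf,sg}$.
\item Inductively, $R_{f,g}=(q-1)^{\ell}$ exactly when every interval appearing in this recursion has the pure form and the term $qR_{sf,sg}$ vanishes. This means $sf\not\le sg$, which is a lattice-like condition.
\end{itemize}
Rather than redo this, I would invoke the Gorsky--Kim--Sherman-Bennett argument: it is purely combinatorial about Bruhat intervals via the recursion and Dyer's results, so it holds in any Coxeter group. I expect this to be the main obstacle, namely checking that their proof of ``no 2-crown $\Leftrightarrow R=(q-1)^\ell$'' uses only Coxeter-theoretic facts valid for infinite groups, such as the lifting property and Dyer's reflection-order description, and no finiteness such as the longest element. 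I would rewrite it via reflection orders. $\tilde R_{f,g}$ counts paths in the Bruhat graph with increasing labels, so $\tilde R=q^{\ell}$ iff the only increasing paths from $f$ to $g$ are maximal chains counted once. A 2-crown produces an increasing path of length $1$ across a length-$3$ subinterval. Conversely, a shortest non-covering increasing edge yields a length-$3$ subinterval with $\tilde R_{x,y}=q^3+q$, which by Dyer's classification of rank-$3$ intervals is a 2-crown.
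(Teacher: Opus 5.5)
Your Deodhar bookkeeping for (iii)$\Leftrightarrow$(iv)$\Leftrightarrow$(v) is sound. Positivity of $\tilde R_{f,g}=\sum_A q^{e(A)}$ is the paper's substitution $q=1+z$ in another form. Proving (i)$\Rightarrow$(iii) by spreading out and counting $\F_q$-points is also valid, though heavier than the paper's argument, which applies Lemma~\ref{lem:open-torus} to the open torus $D_+$ inside the torus $\Ro_{f,g}$.

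The genuine gap is the crown equivalence (ii)$\Leftrightarrow$(iii). You leave it as a sketch, and both directions of the sketch fail as written.

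For (ii)$\Rightarrow$(iii), you take an increasing Bruhat path of length $<d$ and assert that a shortest non-cover edge has length $3$. Nothing in your argument forces this. If $g=ft$ with $\ell(g)-\ell(f)\geq 5$, the single edge $f\to g$ is already such a path. Inducting on smaller intervals gains nothing, because $[f,ft]=[f,g]$.

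The missing ingredient is the paper's Lemma~\ref{lem:codim-one}. Given any distinguished $A\neq A_+$, replace the prefix before its last length-decreasing step by the positive distinguished subword of that prefix. The result is distinguished with $b=1$. Through Dyer's path formula, this gives an increasing path of length $d-2$. Its only non-cover edge has length exactly $3$, and your rank-three classification then turns it into a $2$-crown.

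For (iii)$\Rightarrow$(ii), a $2$-crown $[x,y]\subseteq[f,g]$ does give an increasing path of length $1$ inside $[x,y]$. But increasing paths for a fixed reflection order do not concatenate, so this says nothing about $\tilde R_{f,g}$. You need the hereditary statement that a $2$-crown anywhere in $[f,g]$ forces $[q^{d-1}]R_{f,g}\neq -d$.

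The paper routes both directions through this coefficient. Brenti's crown criterion states that $[x,y]$ is crown-free iff $[q]R_{x,y}=(-1)^{d-1}d$, and it holds in every Coxeter group. Together with the reciprocity $R_{x,y}(q)=(-q)^dR_{x,y}(q^{-1})$, it identifies crown-freeness with $[q^{d-1}]R_{f,g}=-d$. That settles (iii)$\Rightarrow$(ii) at once. For the converse, Lemma~\ref{lem:codim-one}, via $[q^{d-1}]R=-d+m$ with $m$ the number of $b=1$ subexpressions, upgrades the coefficient condition to $R=(q-1)^d$.

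Your fallback of transplanting the Gorsky--Kim--Sherman-Bennett proof does not avoid this problem. Their step that produces a codimension-one Deodhar stratum uses the finite-type cluster structure, which is exactly what Lemma~\ref{lem:codim-one} replaces. Finally, Dyer did not prove that $R$-polynomials depend only on the poset; that is the open combinatorial invariance conjecture. You only need the rank-three case here, so this is a misattribution rather than a fatal error.
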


The proof starts with the positive Deodhar stratum, which is a dense open torus. An algebraic torus cannot contain a proper open subset that is itself a torus (Lemma~\ref{lem:open-torus}). Thus the whole variety is a torus exactly when the Deodhar decomposition has only this stratum.

To connect this condition with crown avoidance, we show that every Deodhar decomposition with more than one stratum has a stratum of codimension one (Lemma~\ref{lem:codim-one}). The argument replaces the prefix before the last length-decreasing step by a positive distinguished subword. It works for arbitrary words in arbitrary Coxeter systems and replaces the cluster-structure argument in the finite-type proof of \cite[Lemma~3.1]{unexpected}.

For a positroid patch, the upper endpoint of the affine interval is a translation. Given $f\in\mathbf B_{k,n}$ and $I\in\binom{[n]}k$, define
\[
 t_I(i)=\begin{cases} i+n,&i\bmod n\in I,\\ i,&i\bmod n\notin I \end{cases},
\]
where residues are represented by $[n]$. The patch is nonempty exactly when $I$ is a basis of the positroid of $f$, equivalently when $f\leq t_I$. In that case Snider's isomorphism \cite{snider,thomas} gives
\[
 \Pio_{f,I}\cong\Ro_{f,{t_I}},\qquad
 \dim\Pio_{f,I}=\ell(t_I)-\ell(f)=k(n-k)-\ell(f).
\]
The periodic path $P_I$ and affine Deograms in the following statement are recalled in Section \ref{sec:affine-patches}.

\begin{corollary}\label{cor:patch-main}
Let $f\in\Bkn$ and $I\in\binom{[n]}k$ satisfy $f\leq t_I$. The following are equivalent. 
\begin{enumerate}[label=\textup{(\roman*)}]
    \item $\Pio_{f,I}\cong(\C^*)^{k(n-k)-\ell(f)}$ as an algebraic variety.
    \item The affine Bruhat interval $[f,t_I]$ contains no 2-crown.
    \item $R_{f,t_I}(q)=(q-1)^{k(n-k)-\ell(f)}$.
    \item The Deodhar decomposition of $\Pio_{f,I}$ has a single stratum.
    \item There is a unique $(f,P_I)$-affine Deogram.
    \end{enumerate}
\end{corollary}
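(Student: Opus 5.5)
The plan is to deduce Corollary~\ref{cor:patch-main} from Theorem~\ref{thm:main}, applied with $g=t_I$ and lower endpoint $f$. Snider's isomorphism $\Pio_{f,I}\cong\Ro_{f,t_I}$ translates each condition of the corollary into the corresponding condition of the theorem. First we check the hypotheses. Both $f$ and $t_I$ lie in the component of the extended affine symmetric group with $\sum_{i=1}^n (g(i)-i)=kn$, so they are in the same component, and $f\le t_I$ holds by assumption. The dimension formula recalled before the statement gives $\ell(t_I)-\ell(f)=k(n-k)-\ell(f)$. Hence the exponents in (i) and (iii) of the corollary agree with those in (i) and (iii) of Theorem~\ref{thm:main}.

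With these identifications, the equivalences follow term by term. Condition (i) of the corollary is condition (i) of the theorem transported along Snider's isomorphism, since being an algebraic torus is an isomorphism invariant. Conditions (ii) and (iii) are literally conditions (ii) and (iii) of the theorem for the pair $(f,t_I)$.

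For (iv) one needs to know that the Deodhar decomposition of $\Pio_{f,I}$ is defined as the transport of the Deodhar decomposition of $\Ro_{f,t_I}$ under Snider's map, relative to a reduced word for $t_I$. If instead it is defined intrinsically on the patch, as in Section~\ref{sec:affine-patches}, one must check that Snider's isomorphism carries strata to strata. I would verify this by following the Marsh--Rietsch style parametrization through the isomorphism. This compatibility is where I expect the only real work to lie, if any.

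Finally, (v) should match condition (v) of the theorem. The periodic path $P_I$ encodes a particular reduced word of $t_I$. The $(f,P_I)$-affine Deograms should be in bijection with the distinguished subexpressions for $f$ in that word, and the positive distinguished subexpression always exists because $f\le t_I$. Uniqueness of the Deogram therefore means that the positive distinguished subexpression is the only one. Since Theorem~\ref{thm:main}(v) allows ``some reduced word'', this suffices. The main obstacle is confirming, from the definitions in Section~\ref{sec:affine-patches}, that Deograms for $P_I$ are exactly the distinguished subexpressions of a reduced word of $t_I$. I would check this by reading off the crossings of $P_I$ as simple reflections and comparing the Deogram rules with Deodhar's distinguishedness condition $v_{(j-1)}s_{i_j}\ge v_{(j-1)}$ whenever $s_{i_j}$ is skipped.
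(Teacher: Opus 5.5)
Your proposal is correct and follows the paper's proof: transport along Snider's isomorphism (Theorem~\ref{thm:snider}), apply Theorem~\ref{thm:main} with $g=t_I$, and use the bijection between distinguished subexpressions of the $P_I$-word and $(f,P_I)$-affine Deograms. The two points you flag as possible work are not proved in the paper. It defines the Deodhar decomposition of $\Pio_{f,I}$ as the transport of \eqref{eq:deodhar-decomposition} under Snider's map (Section~\ref{sec:affine-patches}), and it takes the Deogram bijection from \cite{thomas}.
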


The examples in Section~\ref{sec:example} show why the distinction between intrinsic and standard toricity matters. For the top open positroid variety in $\Gr(2,4)$, the $\Delta_{13}$-patch is an intrinsic torus and the $\Delta_{12}$-patch is not. Standard toricity, however, is the same for every nonempty patch. The unexpected Richardson torus of Gorsky--Kim--Sherman-Bennett \cite[Example~2.11]{unexpected} has a nonslice Solomon link, so intrinsic toricity alone does not imply sliceness.

We also use the genus formula to recover the slice genus of the Perko knot. Finally, a seven-dimensional standard Richardson torus has the nontrivial slice knot $12n_{582}$ as its link. This example shows why the equality $g_3=g_4$ and the unlink characterization require the positroid hypothesis. The standard-torus slice criterion and the genus--codimension formula apply to braid Richardson varieties more generally.

\subsection*{Conventions and organization}

All varieties are over $\C$ unless a finite field is specified. Permutations compose right to left, so $(uv)(i)=u(v(i))$ and right multiplication by $s_i$ interchanges positions $i$ and $i+1$. We use $\ell$ and $\leq$ for Coxeter length and Bruhat order. For extended affine permutations in the same component, write $f=\omega\bar f$ and $g=\omega\bar g$ with $\omega$ of length zero. Reduced words, distinguished subexpressions, and $R_{f,g}$ refer to the Coxeter pair $\bar f\leq\bar g$.

Section~\ref{sec:braid} develops the Deodhar and graph calculations and proves the standard-torus slice criterion and the genus--codimension formula. Section~\ref{sec:finite} specializes these results to finite Richardson varieties and proves the stronger statement for positroid links. Sections~\ref{sec:affine-patches}--\ref{sec:affine-proof} establish the intrinsic criterion for affine Richardson varieties and positroid patches. Section~\ref{sec:standard} brings the intrinsic and standard criteria together on patches. Section~\ref{sec:example} illustrates the distinction between intrinsic and standard toricity, computes the slice genus of the Perko knot, and concludes with a nontrivial slice Richardson knot.

\section{Braid Richardson varieties}\label{sec:braid}

We prove Theorems~\ref{thm:braid-standard} and~\ref{thm:genus-codimension} using the omitted positions of the positive distinguished subword. These positions determine both the weights of the standard action and a quasipositive factorization of the associated link braid. We begin with the Deodhar decomposition and the intrinsic-torus facts needed here and in the affine argument.

\subsection{Positive words and distinguished subwords}

Let $G=\operatorname{GL}_n(\C)$, let $B_+$ and $B_-$ be its standard opposite Borel subgroups, and let $\mathcal F=G/B_+$. For flags $F,F'\in\mathcal F$ we write $F\xrightarrow{w}F'$ when their relative position is $w$. Fix a positive word $\beta=(s_{i_1},\dots,s_{i_m})$, not necessarily reduced. Its Demazure product is computed from $\delta_0=e$ by
\[
 \delta_j=\max(\delta_{j-1},\delta_{j-1}s_{i_j}),
 \qquad \delta(\beta)=\delta_m,
\]
the maximum being taken in Bruhat order. Assume $u\leq\delta(\beta)$. The braid Richardson variety is
\[
 \Ro_{u,\beta}
 =\left\{(F_0,\dots,F_m)\in\mathcal F^{m+1}:  \begin{array}{l}
 F_0=B_+,\quad F_{j-1}\xrightarrow{s_{i_j}}F_j
                 \text{ for }1\leq j\leq m,\\  F_m\in B_-\dot uB_+/B_+
 \end{array}\right\}.
\]
This endpoint-cell convention agrees with the braid Richardson model of \cite[Section~2.7]{GLSBS}. If $\beta$ is a reduced word for $w$, the endpoint map identifies $\Ro_{u,\beta}$ with
the open Richardson variety $\Rfin_{u,w}$.

A subexpression (or subword) of $\beta$ is a tuple $A=(a_1,\dots,a_m)$ of chosen letters, with $a_j\in\{e,s_{i_j}\}$. We write
\[
 a_{(0)}=e,\qquad a_{(j)}=a_1\cdots a_j\quad(1\leq j\leq m)
\]
for its prefix products. It is a distinguished $u$-subword if $a_{(m)}=u$ and
\[
 a_{(j)}\leq a_{(j-1)}s_{i_j}\quad(1\leq j\leq m).
\]
Thus a length-decreasing step must be taken, whereas a length-increasing step may be taken or skipped. We call position $j$ \emph{selected} if $a_j=s_{i_j}$ and \emph{omitted} if $a_j=e$. Write $\DS(u,\beta)$ for the set of distinguished $u$-subwords and put
\[
 e(A)=\#\{j:a_j=e\},\qquad
 b(A)=\#\{j:\ell(a_{(j)})<\ell(a_{(j-1)})\}.
\]
Counting selected increasing, selected decreasing, and omitted positions gives
\begin{equation}\label{eq:braid-balance}
 e(A)+2b(A)=m-\ell(u)=d.
\end{equation}
The set $\DS(u,\beta)$ is nonempty exactly when $u\leq\delta(\beta)$. These definitions and the construction below apply to arbitrary words in any Coxeter system $(W,S)$; the geometric applications use finite and affine type $A$.

For $u\leq\delta(\beta)$, the \emph{positive distinguished subword} $A_+=(p_1,\dots,p_m)$ is obtained by first computing its prefix products $u_j$ by the backward recursion
\begin{equation}\label{eq:pds-backward}
 u_m=u,\qquad u_{j-1}=\min(u_j,u_js_{i_j})\quad(j=m,\dots,1),
\end{equation}
and then setting $p_j=u_{j-1}^{-1}u_j\in\{e,s_{i_j}\}$. To see that $u_0=e$, choose any $A=(a_1,\dots,a_m)\in\DS(u,\beta)$. The lifting property makes $\pi_s(x):=\min(x,xs)$ order-preserving, and each step of $A$ satisfies $\pi_{s_{i_j}}(a_{(j)})\leq a_{(j-1)}$. Downward induction therefore gives $u_j\leq a_{(j)}$, hence $u_0=e$ and $u_j=p_1\cdots p_j$. The backward rule produces distinguished steps with no length decrease and is forced for every such subword. Thus $A_+$ is the unique element of $\DS(u,\beta)$ with $b=0$.

\subsection{Two elementary lemmas}

We use the following rigidity observation from \cite{unexpected}. We recall its proof for completeness.

\begin{lemma}[{\cite[Lemma~2.1]{unexpected}}]\label{lem:open-torus}
Let $T$ be an algebraic torus and let $U\subseteq T$ be a nonempty open subset. If $U$ is an algebraic torus, then $U=T$.
\end{lemma}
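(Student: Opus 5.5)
The plan is to compare units of coordinate rings. Write $T=\operatorname{Spec}\C[x_1^{\pm1},\dots,x_n^{\pm1}]$. Since $U$ is a nonempty open subset of the irreducible variety $T$, it is dense, and restriction of functions gives an injection $\mathcal O(T)\hookrightarrow\mathcal O(U)$. Suppose $U\cong(\C^*)^m$. Then $m=\dim U=\dim T=n$. Moreover, $U$ is affine, and its unit group modulo constants is the character lattice $\mathcal O(U)^\times/\C^\times\cong\mathbb Z^n$. The same holds for $T$.

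First I will reduce to a divisor statement. The complement $Z=T\setminus U$ is closed. Since $U$ is affine and $T$ is smooth (in particular locally factorial), the complement of an affine open set in a normal variety has pure codimension one. So $Z$ is either empty or a union of prime divisors $D_1,\dots,D_r$. Because $\C[x^{\pm1}]$ is a UFD, each $D_i$ is cut out by an irreducible element $h_i\in\mathcal O(T)$. The function $h_i$ is regular and nonvanishing on $U$, so it is a unit of $\mathcal O(U)$. If $h_i$ were a unit on $T$ it would be a monomial and would never vanish on $T$. Hence $h_i\notin\mathcal O(T)^\times$.

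The key step is a rank count. I claim that $\mathcal O(U)^\times/\C^\times$ contains $\mathcal O(T)^\times/\C^\times\cong\mathbb Z^n$ together with the classes of $h_1,\dots,h_r$, and that these are multiplicatively independent. To see this, suppose a relation $\prod h_i^{a_i}=c\,x^{b}$ holds with not all $a_i$ zero. Unique factorization in the Laurent polynomial ring is violated, because the $h_i$ are pairwise non-associate nonunit primes. So $\operatorname{rank}\bigl(\mathcal O(U)^\times/\C^\times\bigr)\ge n+r$. This rank equals $n$ because $U\cong(\C^*)^n$, which forces $r=0$. Therefore $Z=\emptyset$ and $U=T$.

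I expect the main obstacle to be justifying that $Z$ has pure codimension one. If that route causes trouble, there is an alternative that avoids it. Any isomorphism $U\cong(\C^*)^n$ identifies $\mathcal O(U)$ with a Laurent ring generated by units. These units lie in $\mathcal O(U)^\times$, and by the above this group is generated by $\mathcal O(T)^\times$ and the $h_i$. Hence $\mathcal O(U)=\mathcal O(T)[h_1^{-1},\dots,h_r^{-1}]$, so $U$ is a principal open set $\{h\neq0\}$ with $h=\prod h_i$. Then I run the same unit-rank comparison: the units of $\mathcal O(T)[h^{-1}]$ modulo constants form $\mathbb Z^{n+r}$, where $r$ is the number of distinct prime factors of $h$. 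Setting this equal to $n$ again gives $r=0$, so $h$ is a unit and $U=T$.
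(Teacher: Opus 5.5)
Your proof is correct, but it takes a different route from the paper.

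\textbf{The paper's route.} The paper never examines the complement $Z=T\setminus U$. It restricts the coordinates $y_i$ of $T$ to $U$. Each restriction is a unit of the Laurent ring $\mathcal O(U)$, hence a scalar times a monomial. After a translation, the inclusion $U\hookrightarrow T$ therefore becomes a homomorphism of algebraic groups. Its image is a translate of $U$, so it is an open subgroup of the connected group $T$, and hence all of $T$. This argument uses the group structures on both sides and needs nothing beyond the description of units of a Laurent polynomial ring.

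\textbf{Your route.} Your argument is essentially the units--class-group exact sequence for $U\subseteq T$, using $\operatorname{Cl}(T)=0$. It requires heavier input: purity of the complement of an affine open subset, and factoriality of $\C[x^{\pm1}]$. In exchange, it never uses the group structure on $U$. It uses only that $U$ is affine with $\operatorname{rank}\mathcal O(U)^\times/\C^\times\le n$. So it proves more: every proper affine open subset of $T$ has unit rank at least $n+1$. The same reasoning shows that an affine open subset of a factorial affine variety with the same unit rank as the ambient variety is the whole variety.

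\textbf{A caution about your fallback paragraph.} As written, it does not avoid the purity step. ``The above'' only showed that $\mathcal O(T)^\times$ and the $h_i$ lie in $\mathcal O(U)^\times$ and are independent there, not that they generate it. Also, the $h_i$ themselves were defined through the divisorial decomposition of $Z$. Generation can be proved directly:
\begin{enumerate}
\item Write a unit $u$ as $a/b$ in lowest terms in the UFD $\mathcal O(T)$.
\item Regularity of $u$ and $u^{-1}$ on $U$ forces every prime factor of $a$ and $b$ to vanish only inside $Z$.
\item Hence each such prime factor cuts out one of the finitely many codimension-one components of $Z$.
\end{enumerate}
With this generation statement in place, the fallback argument goes through.
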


\begin{proof}
Since $U$ is open in the irreducible variety $T$ we have $\dim U=\dim T=:d$. Choose coordinates $y_1,\dots,y_d$ on $T$ and $x_1,\dots,x_d$ on $U$. Each restriction $y_i|_U$ is a unit in $\C[x_1^{\pm1},\dots,x_d^{\pm1}]$, hence a nonzero scalar $c_i$ times a Laurent monomial. Composing the inclusion $U\hookrightarrow T$ with translation by $(c_1^{-1},\ldots,c_d^{-1})$ therefore gives a homomorphism of algebraic groups. Its image is an open subgroup of the connected group $T$, hence is all of $T$. Translating back gives $U=T$.
\end{proof}

We will use the following lemma as well.

\begin{lemma}\label{lem:stable-open}
Let an algebraic torus $T$ act on an irreducible variety $X$, and let $U\subseteq X$ be a nonempty $T$-stable open subset. $X$ has a dense $T$-orbit if and only if $U$ does.
\end{lemma}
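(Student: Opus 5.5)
The plan is to prove both implications directly from the topology of irreducible varieties; no structure specific to Richardson varieties is needed. Throughout, $X$ is irreducible, so any nonempty open subset of $X$ is dense and irreducible, and its closure is $X$.

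First suppose $U$ has a dense $T$-orbit $T\cdot x$ with $x\in U$. Its closure in $U$ is $U$. Since $U$ is dense in $X$, the closure of $T\cdot x$ in $X$ contains $\overline U=X$. Hence $T\cdot x$ is dense in $X$. This direction uses only the density of $U$; $T$-stability is not needed here.

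Conversely, suppose $X$ has a dense orbit $O=T\cdot x$. An orbit of an algebraic group action is locally closed, being open in its closure by Chevalley's theorem. As $O$ is dense, $O$ is open in $\overline O=X$. Then $O$ and $U$ are nonempty open subsets of the irreducible space $X$, so they meet, say at $y\in O\cap U$. Because $U$ is $T$-stable, $O=T\cdot y\subseteq U$. Density in $X$ then gives density in $U$, so $O$ is a dense $T$-orbit of $U$.

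The only non-formal input is the local closedness of orbits, which is standard. Orbit maps are morphisms, and an image with dense closure contains a dense open subset of its closure. Homogeneity under $T$ then spreads this open subset over the entire orbit. I expect no real obstacle: the key point is simply to use $T$-stability of $U$ in the converse, so that the orbit through a point of $O\cap U$ lies entirely inside $U$.
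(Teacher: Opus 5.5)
Your proof is correct and follows the paper's argument: density of $U$ in the irreducible $X$ gives one direction, and $T$-stability of $U$ traps a dense orbit that meets $U$ for the other. The appeal to Chevalley's theorem and local closedness of orbits is unnecessary, since a dense subset of $X$ meets every nonempty open subset by definition, and that is all the paper uses to see that the orbit meets $U$.
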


\begin{proof}
A dense orbit in $X$ meets $U$ and hence lies in $U$ by stability. Since $X$ is irreducible and $U$ is nonempty and open, $U$ is dense in $X$. Hence an orbit dense in $U$ is also dense in $X$.
\end{proof}

\subsection{The Deodhar decomposition and intrinsic tori}

The braid Richardson variety $\Ro_{u,\beta}$ is smooth, affine, and irreducible of dimension $d$ \cite[Proposition~6.13]{GLSBS}. Its Deodhar decomposition for an arbitrary positive word is
\begin{equation}\label{eq:braid-deodhar}
 \Ro_{u,\beta}
 =\bigsqcup_{A\in\DS(u,\beta)}D_A,
 \qquad D_A\cong(\C^*)^{e(A)}\times\C^{b(A)},
\end{equation}
and identifies the point count over $\F_q$ with the generalized $R$-polynomial
\begin{equation}\label{eq:braid-R}
 R_{u,\beta}(q)
 :=\sum_{A\in\DS(u,\beta)}
       (q-1)^{e(A)}q^{b(A)}.
\end{equation}
These statements hold for arbitrary positive words \cite[Section~7]{galashin2024rational}. The stratum $D_+:=D_{A_+}$ is open and isomorphic to $(\C^*)^d$ \cite[Proposition~7.3]{GLSBS}; by \eqref{eq:braid-balance} every other stratum has dimension $d-b(A)<d$. When $\beta$ is a reduced word for $w$, \eqref{eq:braid-R} is the ordinary Kazhdan--Lusztig $R$-polynomial $R_{u,w}(q)$.

\begin{lemma}\label{lem:braid-intrinsic}
For $u\leq\delta(\beta)$, the following are equivalent:
\[
 \Ro_{u,\beta}\cong(\C^*)^d,\qquad
 \DS(u,\beta)=\{A_+\},\qquad
 R_{u,\beta}(q)=(q-1)^d.
\]
When these conditions hold, $\Ro_{u,\beta}=D_+$.
\end{lemma}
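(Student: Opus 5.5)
I will prove the three equivalences by passing through the middle condition $\DS(u,\beta)=\{A_+\}$, using the Deodhar decomposition \eqref{eq:braid-deodhar}, the point count \eqref{eq:braid-R}, and Lemma~\ref{lem:open-torus}.

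\emph{Second condition implies first.} If $\DS(u,\beta)=\{A_+\}$, then \eqref{eq:braid-deodhar} has a single piece. Hence $\Ro_{u,\beta}=D_+\cong(\C^*)^{e(A_+)}=(\C^*)^d$, since $b(A_+)=0$ and \eqref{eq:braid-balance} gives $e(A_+)=d$. This also proves the final sentence of the lemma once all three conditions are known to be equivalent.

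\emph{First condition implies second.} Suppose $\Ro_{u,\beta}\cong(\C^*)^d$, and call this torus $T$. The positive stratum $D_+$ is a nonempty open subset of $\Ro_{u,\beta}$ and is itself isomorphic to $(\C^*)^d$. Lemma~\ref{lem:open-torus} then forces $D_+=\Ro_{u,\beta}$. Since the decomposition \eqref{eq:braid-deodhar} is disjoint and every stratum $D_A$ is nonempty (it is isomorphic to $(\C^*)^{e(A)}\times\C^{b(A)}$), no $A\neq A_+$ can occur. Hence $\DS(u,\beta)=\{A_+\}$.

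\emph{Second and third conditions.} If $\DS(u,\beta)=\{A_+\}$, then \eqref{eq:braid-R} reduces to the single term $(q-1)^d$. Conversely, suppose $R_{u,\beta}(q)=(q-1)^d$. The $A_+$ term of \eqref{eq:braid-R} already contributes $(q-1)^d$, so the remaining sum
\[
\sum_{A\neq A_+}(q-1)^{e(A)}q^{b(A)}
\]
vanishes identically. Each summand is positive at any real $q>1$, so this sum is empty, and $\DS(u,\beta)=\{A_+\}$. Alternatively, every other $A$ has $b(A)\geq1$, so its term is nonzero, and evaluating at $q=2$ gives a contradiction.

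The only step requiring care is the first implication. It depends on $D_+$ being open and on the Deodhar strata being nonempty and disjoint; both are supplied by the cited results, after which Lemma~\ref{lem:open-torus} does the remaining work. I do not expect a genuine obstacle.
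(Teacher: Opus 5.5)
Your proof is correct and follows the paper's argument: you apply Lemma~\ref{lem:open-torus} to the open torus $D_+$ and use that every stratum is nonempty to get the first two equivalences, and positivity of the extra summands in \eqref{eq:braid-R} to get the third. The only difference is cosmetic: you evaluate at a real $q>1$, while the paper substitutes $q=1+z$ and uses nonnegativity of coefficients, which is the same positivity observation.
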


\begin{proof}
If $\Ro_{u,\beta}$ is a torus, its open torus $D_+$ equals
$\Ro_{u,\beta}$ by Lemma~\ref{lem:open-torus}. Since every
distinguished subword indexes a nonempty Deodhar stratum, this is
equivalent to $\DS(u,\beta)=\{A_+\}$; conversely, that equality
exhibits $\Ro_{u,\beta}=D_+\cong(\C^*)^d$.

Substituting $q=1+z$ in \eqref{eq:braid-R} gives
\[
 R_{u,\beta}(1+z)
 =z^d+\sum_{A\neq A_+}z^{e(A)}(1+z)^{b(A)}.
\]
Every additional summand is nonzero with nonnegative coefficients.
Thus $R_{u,\beta}(q)=(q-1)^d$ exactly when the additional sum is empty.
\end{proof}

By Lemma~\ref{lem:braid-intrinsic}, uniqueness of the distinguished subword is invariant under positive braid moves, although the individual subwords and their parametrizations can change. For reduced $\beta$, the polynomial criterion concerns the corresponding Bruhat interval. For nonreduced $\beta$, crown avoidance in $[u,\delta(\beta)]$ does not characterize intrinsic toricity; see Example~\ref{ex:braid-demazure}.

The next lemma is the key combinatorial step in the affine criterion in Section~\ref{sec:affine-proof}. Let $(W,S)$ be any Coxeter system, let $\beta$ be an arbitrary word in $S$, and fix $u\leq\delta(\beta)$. In the geometric applications, $b(A)$ is the codimension of $D_A$, so the lemma says that a decomposition with more than one stratum must have one of codimension one.

\begin{lemma}\label{lem:codim-one}
If $\DS(u,\beta)\neq\{A_+\}$, then some $A\in\DS(u,\beta)$ has $b(A)=1$.
\end{lemma}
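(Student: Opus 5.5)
The plan is to modify an arbitrary non-positive distinguished subword at its prefix, following the strategy announced in the introduction. Since $A_+$ is the unique element of $\DS(u,\beta)$ with $b=0$, the hypothesis gives some $A=(a_1,\dots,a_m)\in\DS(u,\beta)$ with $b(A)\geq1$. Let $k$ be the \emph{last} position at which the length decreases, so $\ell(a_{(k)})<\ell(a_{(k-1)})$, and put $v=a_{(k-1)}$. Because the step at $k$ decreases length, it is selected: $a_k=s_{i_k}$ and $vs_{i_k}<v$. Write $\beta_{<k}=(s_{i_1},\dots,s_{i_{k-1}})$ for the prefix word.

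The first step is to observe that the truncation $(a_1,\dots,a_{k-1})$ lies in $\DS(v,\beta_{<k})$. Indeed, the distinguished condition $a_{(j)}\leq a_{(j-1)}s_{i_j}$ at position $j$ involves only the prefix products up to $j$. Hence $\DS(v,\beta_{<k})$ is nonempty, and the recursion \eqref{eq:pds-backward}, with the argument given after it, produces the positive distinguished subword $(p_1,\dots,p_{k-1})\in\DS(v,\beta_{<k})$, which has no length-decreasing step. (This needs $v\leq\delta(\beta_{<k})$, which follows from nonemptiness as recalled in the text. The construction of $A_+$ only used the existence of some distinguished subword in any case.)

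Next, define $A'=(p_1,\dots,p_{k-1},a_k,\dots,a_m)$. Its prefix products agree with those of $A$ at every index $j\geq k-1$, since both prefixes multiply to $v$. The distinguished condition therefore holds at positions $j\leq k-1$ because of $(p_1,\dots,p_{k-1})$, and at positions $j\geq k$ because it held for $A$ and only involves $a_{(j-1)}$ and $a_{(j)}$. Also $a'_{(m)}=a_{(m)}=u$, so $A'\in\DS(u,\beta)$. Counting decreasing steps: there are none before $k$ by positivity of the replaced prefix, exactly one at $k$, and none after $k$ by the choice of $k$ as the last decrease. Hence $b(A')=1$.

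I expect no serious obstacle here. The only points needing care are that distinguishedness is a local condition on consecutive prefix products, so splicing along the common value $v$ is legitimate, and that choosing $k$ to be the \emph{last} decreasing position, rather than the first, is what rules out decreases in the untouched suffix. The argument uses nothing beyond the Coxeter-group facts already invoked in constructing $A_+$, so it holds for arbitrary words in an arbitrary Coxeter system, as claimed.
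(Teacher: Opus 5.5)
Your proposal is correct and matches the paper's proof: take the last length-decreasing position of some $A\neq A_+$, replace the preceding prefix by its positive distinguished subword with the same product $a_{(k-1)}$, and splice. Since the prefix products agree from position $k-1$ onward, the result is distinguished, and it has exactly one decreasing step.
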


\begin{proof}
Choose $A=(a_1,\ldots,a_m)\in\DS(u,\beta)$ with $A\neq A_+$; then $b(A)\geq1$, since $A_+$ is the only distinguished $u$-subword with $b=0$. Let $j$ be the largest position at which $A$ takes a length-decreasing step, so that $a_j=s_{i_j}$ and $\ell(a_{(j)})<\ell(a_{(j-1)})$, with no position after $j$ length-decreasing.

The truncation $(a_1,\ldots,a_{j-1})$ is a distinguished subword of $(s_{i_1},\ldots,s_{i_{j-1}})$ with product $a_{(j-1)}$. Apply \eqref{eq:pds-backward} to this prefix word with endpoint $a_{(j-1)}$ to obtain its positive distinguished subword $(a_1',\ldots,a_{j-1}')$. It has the same product and no length-decreasing step. Set
\[
 A'=(a_1',\ldots,a_{j-1}',a_j,a_{j+1},\ldots,a_m).
\]
The prefix products of $A'$ and $A$ agree from position $j-1$ onward, so every retained suffix step remains distinguished and the full product is still $u$. The replacement prefix is positive distinguished, hence $A'\in\DS(u,\beta)$. Its only length-decreasing position is $j$, so $b(A')=1$.
\end{proof}

The following is the coefficient criterion from the proof of Gorsky--Kim--Sherman-Bennett \cite[Lemma~3.1]{unexpected}, with arbitrary positive words allowed in place of reduced words. Assume $d\geq2$ and let $m_{u,\beta}$ be the number of mutable variables in a seed of the cluster structure on $\Ro_{u,\beta}$.

\begin{corollary}\label{cor:coefficient}
The following are equivalent:
\[
 \Ro_{u,\beta}\text{ is a torus},\qquad
 m_{u,\beta}=0,\qquad
 [q^{d-1}]R_{u,\beta}(q)=-d .
\]
\end{corollary}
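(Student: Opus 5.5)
We plan to prove the equivalence of the three conditions by establishing two equivalences: the coefficient of $q^{d-1}$ in $R_{u,\beta}$ detects exactly whether $\DS(u,\beta)=\{A_+\}$, and the number of mutable variables does the same. Lemma~\ref{lem:braid-intrinsic} already identifies ``$\Ro_{u,\beta}$ is a torus'' with $\DS(u,\beta)=\{A_+\}$, so both remaining conditions need only be compared with this combinatorial condition.

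For the coefficient, we expand \eqref{eq:braid-R}. The summand for $A_+$ is $(q-1)^d$, whose $q^{d-1}$ coefficient is $-d$. For any other $A$ we have $b(A)\ge1$ and $e(A)=d-2b(A)$, so the summand $(q-1)^{e(A)}q^{b(A)}$ has degree $e(A)+b(A)=d-b(A)\le d-1$. Equality holds exactly when $b(A)=1$, and then the summand has leading coefficient $+1$ at $q^{d-1}$. Hence
\[
 [q^{d-1}]R_{u,\beta}(q)=-d+\#\{A\in\DS(u,\beta):b(A)=1\}.
\]
By Lemma~\ref{lem:codim-one}, this count is positive whenever $\DS(u,\beta)\neq\{A_+\}$, and it is zero otherwise. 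This gives the equivalence between the torus condition and the coefficient condition. The hypothesis $d\ge2$ is not really needed here, since the degree bound holds for any $d$.

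For the cluster condition, we use the cluster structure on braid varieties of \cite{GLSBS} (or Casals--Gorsky--Gorsky--Le--Shen--Simental), in which $D_+$ is an initial cluster torus. The cluster variables of that seed are the $d$ coordinates indexed by the omitted positions of $A_+$, and the frozen variables are units on $\Ro_{u,\beta}$.
\begin{itemize}
 \item If $m_{u,\beta}=0$, the coordinate ring is the Laurent polynomial ring in the frozen variables. Hence $\Ro_{u,\beta}\cong(\C^*)^d$, where we use that the cluster algebra equals the coordinate ring and that the frozen variables are invertible.
 \item Conversely, suppose $\Ro_{u,\beta}$ is a torus. A mutable variable $x$ is a regular function that is not a unit. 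Its mutation gives $x x'=M_1+M_2$ with both $x'$ regular, and $x$ vanishes along a divisor, namely the complement of the cluster torus in the union of that seed's torus and its mutation. A torus has only units among the functions whose zero locus is empty, so the existence of a nonunit cluster variable contradicts the condition that the complement of $D_+$ is empty.
\end{itemize}
Rather than this argument, it is cleaner to follow \cite[Lemma~3.1]{unexpected}. There, the codimension-one strata correspond to mutable variables: each mutable variable's vanishing locus in $\Ro_{u,\beta}$ is a divisor contained in the complement of $D_+$. Therefore $m_{u,\beta}>0$ forces a codimension-one stratum, and conversely.

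The main obstacle is the cluster step. It requires importing the precise facts that the cluster algebra equals $\C[\Ro_{u,\beta}]$ with the initial torus equal to $D_+$, and that the mutable variables vanish on the complement of $D_+$. These facts hold for arbitrary positive words by the braid-variety cluster theorems, and we will cite them rather than reprove them. Given these facts, $m_{u,\beta}=0$ is equivalent to $\Ro_{u,\beta}=D_+$, and Lemma~\ref{lem:braid-intrinsic} closes the loop.
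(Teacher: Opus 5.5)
Your proposal is correct and follows essentially the same route as the paper. Both group the terms of $R_{u,\beta}$ by $b(A)$ to get $[q^{d-1}]R_{u,\beta}=-d+\#\{A\in\DS(u,\beta):b(A)=1\}$ and invoke Lemma~\ref{lem:codim-one}, and both import from \cite{GLSBS} that the mutable variables of the Deodhar seed correspond to the codimension-one strata. The paper uses that correspondence to write the coefficient directly as $-d+m_{u,\beta}$.

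Your first sketch of the cluster step, via the exchange relation, is unnecessary. As written it would also need the extra input that each exchange binomial has two distinct monomials; otherwise $xx'=2$ would make $x$ a unit. The cited correspondence you fall back on already suffices.
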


\begin{proof}
In the Deodhar seed, the mutable variables index the closures of the codimension-one Deodhar strata \cite[Sections~7.2--7.3]{GLSBS}, so $m_{u,\beta}$ also counts these strata. By \eqref{eq:braid-balance} a stratum $D_A$ has codimension $b(A)$, and those with $b(A)=1$ have $e(A)=d-2$; strata with $b(A)\geq2$ contribute terms of degree $e(A)+b(A)=d-b(A)\leq d-2$. Thus
\[
 R_{u,\beta}(q)
 =(q-1)^d+m_{u,\beta}\,q(q-1)^{d-2}
   +\bigl(\text{terms of degree at most $d-2$}\bigr).
\]
Extracting the coefficient of $q^{d-1}$ gives $-d+m_{u,\beta}$. If $\Ro_{u,\beta}$ is a torus then $\DS(u,\beta)=\{A_+\}$ by Lemma~\ref{lem:braid-intrinsic} and $m_{u,\beta}=0$; conversely $m_{u,\beta}=0$ forces $\DS(u,\beta)=\{A_+\}$ by Lemma \ref{lem:codim-one}.
\end{proof}

Let $N_{u,\beta}=|\DS(u,\beta)|$. For the empty word set $N_{e,\varnothing}=1$ and $N_{u,\varnothing}=0$ for $u\neq e$. For a positive word $\gamma$ and a simple reflection $s$, the counts satisfy
\begin{equation}\label{eq:braid-count-recursion}
 N_{u,\gamma s}=
 \begin{cases}
 N_{us,\gamma},&us<u,\\
 N_{us,\gamma}+N_{u,\gamma},&us>u.
 \end{cases}
\end{equation}

\begin{proposition}\label{prop:braid-algorithm}
The variety $\Ro_{u,\beta}$ is a torus if and only if $N_{u,\beta}=1$.
\end{proposition}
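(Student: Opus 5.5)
The statement follows from Lemma~\ref{lem:braid-intrinsic} once we check two things: that $N_{u,\beta}=|\DS(u,\beta)|$ for every $u$ and positive word, and that the recursion \eqref{eq:braid-count-recursion} computes this count. The proposition then amounts to saying that this recursion gives an algorithm for deciding toricity.

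\textbf{Step 1: the recursion.} We verify \eqref{eq:braid-count-recursion} directly from the definition of distinguished subwords. Write $\beta=\gamma s$. A distinguished $u$-subword of $\gamma s$ is a distinguished subword $A'$ of $\gamma$ with product $u'$, followed by a last letter $a\in\{e,s\}$. The constraint on this letter is $u'a\leq u's$, and the final product must be $u=u'a$.

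If $a=s$, then $u'=us$, and the condition $u\leq us\cdot s=u$ holds automatically. If $a=e$, then $u'=u$, and the condition is $u\leq us$, which holds exactly when $us>u$.

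So the distinguished $u$-subwords of $\gamma s$ are in bijection with $\DS(us,\gamma)$, together with a copy of $\DS(u,\gamma)$ when $us>u$. This is \eqref{eq:braid-count-recursion}. The base case, the empty word, has only the empty subword, whose product is $e$.

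\textbf{Step 2: conclude.} Since $u\leq\delta(\beta)$, the set $\DS(u,\beta)$ contains $A_+$, so $N_{u,\beta}\geq1$. Hence $N_{u,\beta}=1$ holds exactly when $\DS(u,\beta)=\{A_+\}$. By Lemma~\ref{lem:braid-intrinsic}, this is equivalent to $\Ro_{u,\beta}\cong(\C^*)^d$.

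One could alternatively obtain the count by evaluating the generalized $R$-polynomial \eqref{eq:braid-R} at $q=2$, but the direct bijection above is cleaner.

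\textbf{Main obstacle.} The argument has no serious obstacle. The only point needing care is the convention that the distinguished condition compares $a_{(j)}$ with $a_{(j-1)}s_{i_j}$, so that omitting a letter is forbidden exactly when $us<u$. Applied recursively, the recursion computes $N_{u,\beta}$ in $m$ steps, tracking a table of counts indexed by group elements. This is what makes the criterion algorithmic.
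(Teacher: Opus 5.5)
Your argument is correct and essentially matches the paper's proof. You use the same last-letter case analysis to establish \eqref{eq:braid-count-recursion}, and then apply Lemma~\ref{lem:braid-intrinsic} with $A_+\in\DS(u,\beta)$ to turn $N_{u,\beta}=1$ into toricity.

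One correction to your aside: evaluating \eqref{eq:braid-R} at $q=2$ gives $\sum_{A\in\DS(u,\beta)}2^{b(A)}$, not $N_{u,\beta}$. The equality $R_{u,\beta}(2)=1$ still characterizes $N_{u,\beta}=1$, because every $A\neq A_+$ contributes at least $2$.
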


\begin{proof}
If $us<u$, a distinguished subword with product $u$ cannot omit its last letter, so that letter is $s$ and the preceding prefix product is $us$. If $us>u$, its last letter is either $e$, with preceding prefix product $u$, or $s$, with preceding prefix product $us$ and a decreasing last step. Deleting that letter gives exactly the two cases of \eqref{eq:braid-count-recursion}; this is the unweighted Deodhar recursion of \cite[Section~4.2]{galashin2024rational}. The decision criterion is Lemma~\ref{lem:braid-intrinsic}.
\end{proof}

\begin{example}\label{ex:braid-demazure}
Take $S_2=\{e,s\}$ and $\beta=(s,s)$, so that $\delta(\beta)=s$ and $[e,\delta(\beta)]=[e,s]$ is crown-free. There are nevertheless two distinguished $e$-subwords: $\mathbf{v} = (e,e)$ and $\mathbf{v}' = (s,s)$. Their strata are $(\C^*)^2$ and $\C$, so
\[
 R_{e,(s,s)}(q)=(q-1)^2+q
\]
and $\Ro_{e,(s,s)}$ is not a torus. By contrast $\Ro_{e,(s)}\cong\C^*$, although the Demazure product is again $s$. Nonreducedness is not by itself an obstruction: for the same word and endpoint $u=s$ the only distinguished subword is $\mathbf{v}''=(e,s)$, so
\[
 R_{s,(s,s)}(q)=q-1,\qquad
 \Ro_{s,(s,s)}\cong\C^* .
\]
All three point counts are immediate from the flag model. For instance, identifying $\mathcal F$ with $\mathbb P^1$, the variety $\Ro_{e,(s,s)}$ consists of the pairs $(F_1,F_2)$ with $F_1\neq B_+$, $F_2\neq F_1$, and $F_2$ in the big cell, of which there are $q+(q-1)^2=q^2-q+1$ over $\F_q$.
\end{example}

\subsection{Solid crossings and the standard action}

To determine the dimensions of standard-torus orbits, we compute the weights of the action on the positive Deodhar torus. These weights are indexed by the omitted positions of the positive distinguished subword and can be recorded as edges of a multigraph.

Write the positive distinguished subword as $A_+=(p_1,\ldots,p_m)$, with $p_j\in\{e,s_{i_j}\}$, and put
\[
 u_0=e,\qquad u_j=p_1\cdots p_j,
\qquad
 J^\circ=J^\circ(u,\beta)=\{j:p_j=e\},
\]
so that $u_m=u$ and $|J^\circ|=d$. Both selected and omitted positions satisfy $u_{j-1}s_{i_j}>u_{j-1}$; in particular the selected letters form a reduced expression for $u$, even though $\beta$ need not be reduced.

\begin{definition}\label{def:solid-graph}
Let $\epsilon_1,\ldots,\epsilon_n$ be the standard basis of $\mathbb Z^n$ and put $\alpha_i=\epsilon_i-\epsilon_{i+1}$. At a position $j\in J^\circ$ set
\[
 \rho_j=u_{j-1}(\alpha_{i_j})
       =\epsilon_{a_j}-\epsilon_{b_j},
 \qquad
 \tau_j=u_{j-1}s_{i_j}u_{j-1}^{-1}=(a_j\ b_j).
\]
The multigraph $T_{u,\beta}$ has vertex set $[n]$ and one edge $e_j=\{a_j,b_j\}$ for each $j\in J^\circ$. Equivalently, trace the wiring of the selected subword and insert a bridge between the two strands at each omitted position; see Figure~\ref{fig:3D-graph-T-and-Link}. Edges are indexed by solid positions, so parallel edges are not identified; two parallel edges already constitute a cycle. For a reduced word $\mathbf w$ we also write $T_{v,\mathbf w}$.
\end{definition}

\begin{remark}
The omitted PDS positions are the solid crossings of \cite[Sections~3.1--3.2]{GLSBS}. The strand labels in Definition \ref{def:solid-graph} are determined by the selected prefix $u_{j-1}$, not by the product in $S_n$ of all preceding letters of $\beta$.
\end{remark}

Let $k$ be the number of connected components of $T_{u,\beta}$. We use inverse Deodhar parameters $x_j\in\C^*$, indexed by $J^\circ$. With $y_i(z)=1+zE_{i+1,i}$ and permutation-matrix representatives $\dot s_i$, the corresponding gallery is
\[
 F_r=g_1\cdots g_rB_+,\qquad
 g_j=
 \begin{cases}
  y_{i_j}(x_j^{-1}),&j\in J^\circ,\\
  \dot s_{i_j},&j\notin J^\circ .
 \end{cases}
\]
This is the positive case of the Marsh--Rietsch parametrization \cite[Proposition~5.2]{MR04}, applied successively to the gallery; the Deodhar construction for arbitrary words is recalled in \cite[Theorem~7.4]{galashin2024rational}.

\begin{proposition}\label{prop:braid-weights}
The standard action on $D_+$ has weights $\{\rho_j:j\in J^\circ\}$ in the coordinates $x_j$, and every orbit in $D_+$ has dimension $n-k$.
\end{proposition}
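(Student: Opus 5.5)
We compute how $t=\mathrm{diag}(t_1,\dots,t_n)$ acts on the gallery parametrizing $D_+$, by pushing $t$ from left to right through the factors $g_1\cdots g_m$. Since $T_n$ acts on each flag $F_r$ by left multiplication and $t B_+=B_+$, the point $(F_0,\dots,F_m)$ is sent to $(tF_0,\dots,tF_m)$, and $tF_0=B_+$ is preserved. Put $t^{(0)}=t$. Inductively, write $t^{(j-1)}g_j=g_j' t^{(j)}$ with $t^{(j)}$ diagonal, as follows.
\begin{itemize}[nosep]
 \item If $j\notin J^\circ$, take $g_j'=\dot s_{i_j}$ and $t^{(j)}=\dot s_{i_j}^{-1}t^{(j-1)}\dot s_{i_j}$.
 \item If $j\in J^\circ$, use $t\,y_i(z)\,t^{-1}=y_i\bigl(z\,t_{i+1}/t_i\bigr)=y_i\bigl(z\,t^{-\alpha_i}\bigr)$. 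This gives $g_j'=y_{i_j}\bigl(x_j^{-1}(t^{(j-1)})^{-\alpha_{i_j}}\bigr)$ and $t^{(j)}=t^{(j-1)}$.
\end{itemize}
By induction, $t^{(j-1)}$ is the conjugate of $t$ by the selected prefix $u_{j-1}$, namely $t^{(j-1)}=\dot u_{j-1}^{-1}t\,\dot u_{j-1}$, up to signs in the permutation representatives, which cancel under conjugation of diagonal matrices. Hence $(t^{(j-1)})^{\alpha_{i_j}}=t^{u_{j-1}(\alpha_{i_j})}=t^{\rho_j}$. It follows that $g_1\cdots g_r B_+$ is sent to $g_1'\cdots g_r' t^{(r)}B_+=g_1'\cdots g_r'B_+$. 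So the new point lies again in $D_+$, with coordinates $x_j\mapsto t^{\rho_j}x_j$. This identifies the weights as $\{\rho_j\}$. The main care needed is bookkeeping: the conjugation direction, and the inverse parameter $x_j^{-1}$, which flips $-\alpha$ into $+\alpha$. Either sign convention gives the same orbit dimensions, so even a sign slip would not affect the second claim.

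Next, $D_+\cong(\C^*)^d$ with $T_n$ acting through the characters $\rho_j$, $j\in J^\circ$. This action is a homomorphism $T_n\to(\C^*)^{J^\circ}$ composed with translation. Every orbit is therefore a translate of the image subtorus, and all orbits have dimension equal to the rank of the lattice spanned by $\{\rho_j\}$ inside the character lattice $\{\lambda\in\mathbb Z^n:\sum\lambda_i=0\}$ of $T_n$.

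Finally, $\rho_j=\epsilon_{a_j}-\epsilon_{b_j}$, so the $\rho_j$ are the rows of a signed incidence matrix of the multigraph $T_{u,\beta}$. By the standard fact about graphic matroids, this matrix has rank $n-k$, where $k$ is the number of connected components of the graph. Consequently every orbit in $D_+$ has dimension $n-k$.
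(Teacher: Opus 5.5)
Your proposal is correct and follows the paper's proof. You push $t$ through the gallery, picking up the conjugate $u_{j-1}^{-1}(t)$ at selected positions and the character $\rho_j$ at omitted ones, and you absorb the leftover diagonal factor into $B_+$. The only difference is cosmetic: you read off the orbit dimension as the rank $n-k$ of the signed incidence matrix, whereas the paper computes the stabilizer (entries constant on each component, dimension $k-1$) and subtracts it from $n-1$, which is the same computation seen from the dual side.
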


\begin{proof}
For a diagonal matrix $h$,
\[
 h\,y_i(x^{-1})=y_i\bigl((\alpha_i(h)x)^{-1}\bigr)h,
 \qquad
 h\,\dot s_i=\dot s_i\,s_i(h).
\]
Thus moving $t=\operatorname{diag}(t_1,\ldots,t_n)$ through the gallery factors changes the diagonal factor only at selected positions. Immediately before position $j$ it is $u_{j-1}^{-1}(t)$, so at an omitted position
\[
 x_j(t\cdot F_\bullet)
 =\alpha_{i_j}\bigl(u_{j-1}^{-1}(t)\bigr)x_j(F_\bullet)
 =\frac{t_{a_j}}{t_{b_j}}x_j(F_\bullet)
 =\rho_j(t)x_j(F_\bullet).
\]
The remaining right diagonal factors lie in $B_+$ and do not change the flags. The coordinate formula is unchanged when $t$ is multiplied by a scalar, so it defines the standard $T_n$-action.

Since every $x_j$ is nonzero, a diagonal matrix fixes a point of $D_+$ exactly when $t_{a_j}=t_{b_j}$ for every edge of $T_{u,\beta}$. Its entries are therefore constant on each connected component. The stabilizer in $T_n$ has dimension $k-1$, and every orbit in $D_+$ has dimension $(n-1)-(k-1)=n-k$.
\end{proof}

Since $D_+$ is open and dense of dimension $d$, Lemma \ref{lem:stable-open} shows that a dense standard orbit exists exactly when $n-k=d$. For a multigraph with $n$ vertices and $d$ edges this equality is equivalent to being a forest. Thus $\Ro_{u,\beta}$ has a dense standard orbit if and only if $T_{u,\beta}$ is a forest.

\subsection{Reflection length and the absence of boundary strata}

The weight calculation determines when a dense standard orbit exists. To prove that such an orbit fills the variety, we must also exclude the remaining Deodhar strata. A factorization of $\beta u^{-1}$ into transpositions gives a bound on their codimensions and forces the decomposition to have a single stratum in the forest case.

Write $\ell_T(\pi)$ for the minimum number of transpositions whose product is $\pi\in S_n$. Then
\[
 \ell_T(\pi)=n-c(\pi),
\]
since every transposition changes the cycle count by one and a cycle of length $r$ is a product of $r-1$ transpositions. 

For the next lemma, let $A=(a_1,\ldots,a_m)$ be any subword of $\beta$ with product $a_{(m)}=u$, and let $J(A)=\{j:a_j=e\}$ be its omitted positions. The subword need not be distinguished. An arrow over a product means that its factors are ordered by increasing position. The following identity is in $S_n$.

\begin{lemma}\label{lem:omitted-factorization}
\begin{equation}\label{eq:hollow-reflection-factorization}
 \beta u^{-1}
 =\prod_{j\in J(A)}^{\longrightarrow}
   a_{(j-1)}s_{i_j}a_{(j-1)}^{-1}.
\end{equation}
\end{lemma}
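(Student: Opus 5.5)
We prove \eqref{eq:hollow-reflection-factorization} by a telescoping argument that compares the full product $\beta=s_{i_1}\cdots s_{i_m}$ with the subword product $a_{(m)}=u$. Write $\beta_{(j)}=s_{i_1}\cdots s_{i_j}$ for the prefix products of the full word, with $\beta_{(0)}=e$, and set
\[
 \pi_j:=\beta_{(j)}\,a_{(j)}^{-1}\in S_n .
\]
Then $\pi_0=e$ and $\pi_m=\beta u^{-1}$. We show by induction on $j$ that
\[
 \pi_j=\prod^{\longrightarrow}_{k\in J(A),\,k\leq j} a_{(k-1)}s_{i_k}a_{(k-1)}^{-1},
\]
and the case $j=m$ is the lemma.

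For the inductive step, compute $\pi_j$ from $\pi_{j-1}$ in the two cases. If position $j$ is selected, then $a_j=s_{i_j}$ and
\[
 \pi_j=\beta_{(j-1)}s_{i_j}\,s_{i_j}^{-1}a_{(j-1)}^{-1}=\pi_{j-1},
\]
so no factor is added. If position $j$ is omitted, then $a_{(j)}=a_{(j-1)}$ and
\[
 \pi_j=\beta_{(j-1)}s_{i_j}a_{(j-1)}^{-1}
 =\beta_{(j-1)}a_{(j-1)}^{-1}\cdot a_{(j-1)}s_{i_j}a_{(j-1)}^{-1}
 =\pi_{j-1}\cdot a_{(j-1)}s_{i_j}a_{(j-1)}^{-1}.
\]
The new factor therefore appears on the right, which matches the increasing-position ordering of the arrow product. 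Nothing about distinguishedness is used, so the identity holds for any subword with product $u$, as stated.

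The only real obstacle is bookkeeping of conventions. We must check that with right-to-left composition the new conjugate lands on the right of $\pi_{j-1}$. The alternative normalization $a_{(j)}^{-1}\beta_{(j)}$ would instead produce conjugates by the suffix, $\beta$-side elements, in reverse order. We must also confirm that the product $s_{i_1}\cdots s_{i_m}$ is the image of $\beta$ used in $L_{u,\beta}$. With $\pi_j=\beta_{(j)}a_{(j)}^{-1}$ as chosen, both points are immediate from the computation above.
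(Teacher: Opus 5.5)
Your proof is correct and is essentially the paper's argument: the paper defines $C_j=W_ja_{(j)}^{-1}$ with $W_j=s_{i_1}\cdots s_{i_j}$. It notes that $C_j=C_{j-1}$ at selected positions and $C_j=C_{j-1}(a_{(j-1)}s_{i_j}a_{(j-1)}^{-1})$ at omitted ones, and telescopes from $C_0=e$, exactly as you do with $\pi_j$.
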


\begin{proof}
Put $W_j=s_{i_1}\cdots s_{i_j}$ and $C_j=W_ja_{(j)}^{-1}$. If $a_j=s_{i_j}$ then $C_j=C_{j-1}$; if $a_j=e$ then $C_j=C_{j-1}\bigl(a_{(j-1)}s_{i_j}a_{(j-1)}^{-1}\bigr)$. Starting from $C_0=e$ proves the factorization.
\end{proof}

Each factor is a transposition. For a distinguished subword $A$, \eqref{eq:braid-balance} therefore gives
\begin{equation}\label{eq:braid-reflection-bound}
 \ell_T\bigl(\beta u^{-1}\bigr)\leq e(A)=d-2b(A).
\end{equation}
For $A_+$, the factors are the edge transpositions of $T_{u,\beta}$. The criterion below also shows that the forest property is invariant under positive braid moves, since $d$ and the permutation represented by $\beta$ are unchanged.

\begin{lemma}\label{lem:hollow-forest}
The multigraph $T_{u,\beta}$ is a forest if and only if $d=n-c\bigl(\beta u^{-1}\bigr)$.
\end{lemma}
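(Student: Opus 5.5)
The argument combines Lemma~\ref{lem:omitted-factorization}, applied to $A=A_+$, with the standard fact that a product of transpositions coming from the edges of a forest has cycles exactly equal to the vertex sets of the forest's components. By Lemma~\ref{lem:omitted-factorization},
\[
 \beta u^{-1}=\prod_{j\in J^\circ}^{\longrightarrow}\tau_j,
\]
where each $\tau_j=(a_j\ b_j)$ is the transposition of the edge $e_j$ of $T_{u,\beta}$, and $|J^\circ|=d$. I would prove both directions by comparing $c(\beta u^{-1})$ with $k$, the number of components of $T_{u,\beta}$.

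\emph{First inequality: $c(\beta u^{-1})\geq k$.} Every factor $\tau_j$ preserves the vertex set of each connected component of $T_{u,\beta}$. Hence every cycle of $\beta u^{-1}$ lies inside a single component, so $c(\beta u^{-1})\geq k$. A multigraph with $n$ vertices, $d$ edges and $k$ components satisfies $d\geq n-k$, with equality exactly for a forest. Combining,
\[
 d\geq n-k\geq n-c(\beta u^{-1}).
\]

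\emph{Second inequality: $c(\beta u^{-1})\leq k$.} Here I would check that the product of the edge transpositions of a connected graph, taken in any order, is transitive on its vertex set. I would argue by induction on the number of factors: multiplying a permutation $\pi$ by a transposition $(a\ b)$, on either side, merges the cycles containing $a$ and $b$ if they are distinct, and splits the cycle otherwise. So after processing the edges in order, vertices in the same component could still end up in different cycles only through a split. This is the main obstacle, since a cycle in the graph can genuinely cause a split; for example, $(12)(12)=e$.

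So transitivity fails in general, and the real claim must be handled differently. The key fact is that the product of the transpositions of a \emph{forest}, in any order, has exactly one cycle per tree. This is standard: each new edge joins two distinct components of the forest built so far, and those lie in distinct cycles by induction, so every multiplication is a merge. Hence if $T_{u,\beta}$ is a forest, $c(\beta u^{-1})=k=n-d$, which proves the forward direction.

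\emph{Converse.} Suppose $d=n-c(\beta u^{-1})$. The chain of inequalities above becomes $d\geq n-k\geq n-c=d$, so all the inequalities are equalities. In particular $d=n-k$, and a multigraph with $n$ vertices, $k$ components and $n-k$ edges is a forest. I expect the only care needed is the first inequality's claim that cycles refine components; parallel edges are handled automatically, since they simply make the graph not a forest and force $d>n-k$.
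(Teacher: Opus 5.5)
Your proof is correct and follows essentially the same route as the paper's: in the forest case each edge transposition in the $A_+$ factorization merges two distinct cycles, and for the converse you combine $c(\beta u^{-1})\geq k$ with $d\geq n-k$, where equality forces a forest. You rightly abandon the general bound $c\leq k$, since two parallel edges already violate it, and that detour plays no role in your final argument.
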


\begin{proof}
Apply \eqref{eq:hollow-reflection-factorization} to $A_+$; its factors are exactly the edge transpositions $\tau_j$. If $T_{u,\beta}$ is a forest, each successive edge joins different components of the graph formed by the earlier edges. The preceding partial product preserves those components, so its cycles lie inside them and the new transposition merges two distinct cycles. Starting from $n$ cycles leaves $n-d$ cycles.

Conversely, the full product preserves each component of $T_{u,\beta}$, so $c(\beta u^{-1})\geq k$, where $k$ is the number of components. A multigraph on $n$ vertices with $d$ edges that is not a forest has $d>n-k$, hence $k>n-d$ and $c(\beta u^{-1})>n-d$, ruling out the displayed equality.
\end{proof}

\begin{proposition}\label{prop:braid-single-orbit}
If $d=\ell_T\bigl(\beta u^{-1}\bigr)$, then $\Ro_{u,\beta}$ is a single standard-torus orbit.
\end{proposition}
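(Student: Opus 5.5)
The plan is to show first that the Deodhar decomposition \eqref{eq:braid-deodhar} has the single stratum $D_+$, and then that $D_+$ is a single standard-torus orbit.

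\textbf{Step 1: only $D_+$ survives.} Let $A\in\DS(u,\beta)$ be arbitrary. Inequality \eqref{eq:braid-reflection-bound} gives
\[
 \ell_T\bigl(\beta u^{-1}\bigr)\leq d-2b(A).
\]
The hypothesis $d=\ell_T(\beta u^{-1})$ then forces $b(A)=0$. Since $A_+$ is the unique distinguished $u$-subword with $b=0$, we get $\DS(u,\beta)=\{A_+\}$. By Lemma~\ref{lem:braid-intrinsic}, this gives $\Ro_{u,\beta}=D_+\cong(\C^*)^d$.

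\textbf{Step 2: $D_+$ is one orbit.} We have $\ell_T(\pi)=n-c(\pi)$, so the hypothesis reads $d=n-c(\beta u^{-1})$. Lemma~\ref{lem:hollow-forest} then says $T_{u,\beta}$ is a forest. A forest with $n$ vertices and $d$ edges has $k=n-d$ components. Proposition~\ref{prop:braid-weights} now shows that every orbit in $D_+$ has dimension $n-k=d=\dim D_+$.

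It remains to upgrade "every orbit is open" to "one orbit." In the coordinates $x_j$, the torus $T_n$ acts on $D_+\cong(\C^*)^d$ through the character map
\[
 t\mapsto(\rho_j(t))_{j\in J^\circ}.
\]
This is a homomorphism of tori $T_n\to(\C^*)^d$, acting by translation on $(\C^*)^d$. Its image is a closed connected subgroup of dimension $d$, namely the full orbit dimension. Hence the image is all of $(\C^*)^d$, and the action on $D_+$ is transitive.

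An alternative for Step 2: the orbits in $D_+$ are all open, they are disjoint, and $D_+$ is irreducible, so there can be only one.

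The main subtlety is Step 2. An open dense orbit alone does not give transitivity; the argument needs both that all orbits are open and that the action is by translations in the torus coordinates. Proposition~\ref{prop:braid-weights} supplies exactly this. Everything else follows directly from the stated lemmas.
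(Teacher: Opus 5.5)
Your proposal is correct and follows the paper's proof essentially step for step. The reflection-length bound \eqref{eq:braid-reflection-bound} forces $b(A)=0$, so $\Ro_{u,\beta}=D_+$; then Lemma~\ref{lem:hollow-forest} gives the forest property, so the weight homomorphism $T_n\to(\C^*)^d$ from Proposition~\ref{prop:braid-weights} has $d$-dimensional image and is surjective, and your alternative ending via disjoint open orbits in the irreducible $D_+$ is also valid.
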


\begin{proof}
For any $A\in\DS(u,\beta)$, \eqref{eq:braid-reflection-bound} gives $d\leq d-2b(A)$, so $b(A)=0$ and hence $A=A_+$. The Deodhar decomposition therefore reduces to $\Ro_{u,\beta}=D_+$. By Lemma \ref{lem:hollow-forest} the multigraph $T_{u,\beta}$ is a forest, so its $d$ weights are linearly independent by Proposition \ref{prop:braid-weights}. The homomorphism from $T_n$ to the coordinate torus has image of dimension $d$ and is therefore surjective, so the action on $D_+$ is transitive.
\end{proof}

The stabilizer is
\[
 K=\{[t_1,\ldots,t_n]\in T_n:
       t_a=t_b\text{ whenever }a\text{ and }b
       \text{ lie in the same component of }T_{u,\beta}\},
\]
a subtorus of dimension $k-1$; in the forest case $\Ro_{u,\beta}\cong T_n/K\cong(\C^*)^d$ equivariantly. In particular the single-orbit property does not assert that the full $(n-1)$-dimensional torus acts effectively.

\subsection{Quasipositivity, sliceness, and slice genus}

The omitted-position factorization also lifts to the braid group. Its factors give a quasipositive surface bounded by $L_{u,\beta}$, and Rudolph's theorem determines its maximal Euler characteristic. This connects the orbit calculation with smooth sliceness and slice genus. Recall our normalization:
\begin{equation}\label{eq:link-genera}
 g_i(L)=\frac{c-\chi_i(L)}2,\qquad i\in\{3,4\}.
\end{equation}
A surface with $r$ components, $c$ boundary circles, and total genus $g$ has $\chi=2r-2g-c$, so its contribution to this normalization is $g+c-r$. Since every component has boundary, $r\leq c$; equality $\chi=c$ holds precisely for $c$ disjoint disks. This proves the zero-genus characterizations of sliceness and the unlink used above.

A braid is \emph{quasipositive} if it is a product of conjugates of positive Artin generators. Write $L=L_{u,\beta}$ and $c=c(\beta u^{-1})$ for its number of components.

\begin{lemma}\label{lem:braid-quasipositive}
The braid $\beta\,\beta(u)^{-1}$ is quasipositive with $d$ factors, and
\begin{equation}\label{eq:braid-chi}
 \chi_4(L)=n-d,\qquad g_4(L)=\frac{d-n+c}{2}.
\end{equation}
\end{lemma}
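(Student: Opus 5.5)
The plan is to lift the factorization of Lemma~\ref{lem:omitted-factorization}, applied to $A_+$, from $S_n$ to the braid group, and then to invoke Rudolph's Euler-characteristic formula for quasipositive braids \cite[Section~3]{Rudolph1993}.

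\textbf{Step 1: the braid-level factorization.} Write $\sigma_i$ for the Artin generators. Let $P_j$ denote the positive braid lift of the selected prefix $u_{j-1}$, that is, the product of the selected letters $\sigma_{i_r}$ with $r<j$. Set $W_j=\sigma_{i_1}\cdots\sigma_{i_j}$ and $C_j=W_j P_{j+1}^{-1}$ in the braid group, mimicking the proof of Lemma~\ref{lem:omitted-factorization}. At a selected position $C_j=C_{j-1}$, since the new letter is appended to both $W$ and $P$. At an omitted position $C_j=C_{j-1}\,(P_j\sigma_{i_j}P_j^{-1})$. This gives
\[
 \beta\,\beta(u)^{-1}=\prod_{j\in J^\circ}^{\longrightarrow}P_j\,\sigma_{i_j}\,P_j^{-1}.
\]
Here we need the product of selected letters to be the positive braid lift $\beta(u)$. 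This holds because the selected letters form a reduced expression for $u$, as noted before Definition~\ref{def:solid-graph}, and positive lifts of reduced words agree by Matsumoto's theorem. The result is a product of $d$ conjugates of positive generators, so the braid is quasipositive with $d$ bands.

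\textbf{Step 2: the Euler characteristic.} The standard disk-and-band surface for such a factorization has $n$ disks and $d$ bands. Pushing it into $B^4$ gives a smooth oriented surface with boundary $L$ and $\chi=n-d$, so $\chi_4(L)\ge n-d$. For the reverse inequality, Rudolph's theorem \cite{Rudolph1993} states that for a quasipositive braid on $n$ strands with $d$ factors, $\chi_4$ of the closure equals $n-d$. Its proof uses the slice-Bennequin inequality, which comes from Kronheimer--Mrowka. Two compatibility checks are needed. First, Rudolph's convention must allow disconnected surfaces without closed components, matching ours; closed components only lower $\chi$, so they never help. Second, the inequality should be applied componentwise to a possibly disconnected surface, using additivity of the writhe-minus-strands bound. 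Then $g_4=(c-\chi_4)/2=(d-n+c)/2$ follows directly from \eqref{eq:link-genera}.

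\textbf{Main obstacle.} The step I expect to be the main obstacle is the precise citation of the upper bound $\chi_4\le n-d$ in our disconnected normalization. Rudolph states it as $\chi_4(\hat b)=n-k$ for quasipositive $b$ with $k$ bands, for the maximal Euler characteristic over surfaces without closed components. I would check that this is exactly our $\chi_4$, and otherwise derive it from the slice-Bennequin inequality $\chi_4(\hat b)\le n-w(b)$. Here the writhe is $w=d$, since each conjugate contributes $+1$ to the exponent sum.
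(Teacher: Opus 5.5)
Your proposal is correct and follows the paper's proof essentially verbatim. You lift the omitted-position factorization to the braid group by the same selected/omitted recursion. Your index shift $C_j=W_jP_{j+1}^{-1}$ is harmless, and only reducedness of the full selected subword is needed to identify the last factor with $\beta(u)$. The $n$-disk, $d$-band surface together with Rudolph's slice--Bennequin bound then gives $\chi_4(L)=n-d$.

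One side remark is wrong but does not affect the argument. Closed components do not only lower $\chi$, since a sphere contributes $+2$; that is exactly why both conventions exclude closed components.
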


\begin{proof}
Let $B_j=\sigma_{i_1}\cdots\sigma_{i_j}$ and let $P_j$ be the braid formed by the selected letters of $A_+$ among the first $j$ positions. The selected subword is reduced, so $P_j=\beta(u_j)$ and in particular $P_m=\beta(u)$. If position $j$ is selected then $B_jP_j^{-1}=B_{j-1}P_{j-1}^{-1}$; if it is omitted then
\[
 B_jP_j^{-1}
 =B_{j-1}P_{j-1}^{-1}
   \bigl(P_{j-1}\sigma_{i_j}P_{j-1}^{-1}\bigr).
\]
Therefore
\begin{equation}\label{eq:braid-quasipositive}
 \beta\,\beta(u)^{-1}
 =\prod_{j\in J^\circ}^{\longrightarrow}
   \beta(u_{j-1})\sigma_{i_j}\beta(u_{j-1})^{-1},
\end{equation}
a quasipositive factorization with $d$ factors. Its braided surface in $B^4$ has $n$ disks and $d$ bands, hence Euler characteristic $n-d$, and the slice--Bennequin inequality gives the matching upper bound for any smooth bounding surface \cite[Section~3]{Rudolph1993}. Thus $\chi_4(L_{u,\beta})=n-d$. The closure has $c(\beta u^{-1})$ components, which gives the genus formula.
\end{proof}

\begin{proof}[Proof of Theorem \ref{thm:braid-standard}]
Proposition \ref{prop:braid-weights} and the discussion following it give \textup{(i)}$\Leftrightarrow$\textup{(iii)} and Lemma \ref{lem:hollow-forest} gives \textup{(iii)}$\Leftrightarrow$\textup{(iv)}. Proposition \ref{prop:braid-single-orbit} gives \textup{(iv)}$\Rightarrow$\textup{(ii)}, and \textup{(ii)}$\Rightarrow$\textup{(i)} is immediate. By \eqref{eq:braid-chi}, smooth sliceness is equivalent to $n-d=c(\beta u^{-1})$, which gives \textup{(iv)}$\Leftrightarrow$\textup{(v)}, with the genus given by \eqref{eq:braid-chi}.
\end{proof}

\begin{proof}[Proof of Theorem~\ref{thm:genus-codimension}]
Proposition~\ref{prop:braid-weights} gives the generic orbit dimension $n-\kappa$. Since $\dim\Ro_{u,\beta}=d$ and $T_{u,\beta}$ has $n$ vertices and $d$ edges,
\[
 C_{u,\beta}=d-n+\kappa=b_1(T_{u,\beta}).
\]
Lemma~\ref{lem:braid-quasipositive} therefore gives
\[
 2g_4(L_{u,\beta})=d-n+c=C_{u,\beta}+c-\kappa.
\]

We claim that $\kappa\leq c\leq\kappa+C_{u,\beta}$. The first inequality holds because the edge-transposition product \eqref{eq:hollow-reflection-factorization} preserves every connected component of $T_{u,\beta}$.

For the second inequality, insert the edges in their order in $\beta$. Exactly $n-\kappa$ edges join previously distinct graph components. Each corresponding transposition merges two permutation cycles, since the preceding product preserves every existing graph component. Each of the remaining $C_{u,\beta}$ transpositions can increase the cycle count by at most one. Starting with $n$ cycles gives
\[
 c\leq n-(n-\kappa)+C_{u,\beta}=\kappa+C_{u,\beta}.
\]
Consequently
\[
 C_{u,\beta}\leq 2g_4(L_{u,\beta})\leq 2C_{u,\beta}.
\]
If the link is a knot, then $c=1$ forces $\kappa=1$, giving $C_{u,\beta}=2g_4(L_{u,\beta})$. The ribbon surface is connected and has one boundary component. Since it consists of $n$ disks and $d$ bands, $\chi(S_{u,\beta})=n-d$, and therefore
\[
 2g(S_{u,\beta})  =1-\chi(S_{u,\beta})  =d-n+1  =2g_4(L_{u,\beta}).
\]
Thus $S_{u,\beta}$ is a minimal slice surface.
\end{proof}

Let $\overline X$ be an irreducible $T_n$-equivariant compactification of $X=\Ro_{u,\beta}$.

\begin{corollary}\label{cor:braid-compactification}
$\overline X$ has a dense standard-torus orbit if and only if the conditions of Theorem \ref{thm:braid-standard} hold.
\end{corollary}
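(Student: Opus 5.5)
The strategy is to reduce the statement about $\overline X$ to Lemma~\ref{lem:stable-open} applied to the open subset $X\subseteq\overline X$, and then to Theorem~\ref{thm:braid-standard}.

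First we check the hypotheses of Lemma~\ref{lem:stable-open}. The variety $\overline X$ is irreducible by assumption. Since $\overline X$ is a compactification of $X$, the variety $X=\Ro_{u,\beta}$ sits in $\overline X$ as a dense open subset. It is nonempty because $u\leq\delta(\beta)$. Because the compactification is $T_n$-equivariant, the embedding $X\hookrightarrow\overline X$ intertwines the two actions, so $X$ is a $T_n$-stable open subset.

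Lemma~\ref{lem:stable-open} then shows that $\overline X$ has a dense $T_n$-orbit exactly when $X$ does. By Theorem~\ref{thm:braid-standard}, $X$ has a dense orbit exactly when conditions (i)--(v) hold. This proves the corollary.

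The only point needing care is the meaning of "compactification". We read it as: $X$ embeds equivariantly as an open dense subset of $\overline X$. If instead only a dense equivariant immersion were intended, one would first note that the image of a locally closed embedding whose image is dense is open in its closure, so $X$ is still open in $\overline X$. With that settled, no other obstacle remains; the argument does not depend on which compactification is chosen.
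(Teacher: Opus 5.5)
Your proposal is correct and matches the paper's proof, which applies Lemma~\ref{lem:stable-open} to the dense $T_n$-stable open subset $X\subseteq\overline X$ and then invokes condition~(i) of Theorem~\ref{thm:braid-standard}. Your check of the hypotheses (irreducibility of $\overline X$, nonemptiness and stability of $X$) just spells out what the paper leaves implicit.
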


\begin{proof}
Apply Lemma \ref{lem:stable-open} to the dense stable open subset $X\subseteq\overline X$.
\end{proof}

Here toricity refers to the dense-orbit property; normality of an arbitrary compactification is not asserted. One may take the closure in the Bott--Samelson gallery variety, whose boundary also allows successive flags to coincide.

For the next corollary, suppose $\DS(u,\beta)=\{A_+\}$.

\begin{corollary}\label{cor:braid-unexpected}
$\Ro_{u,\beta}$ is an unexpected torus if and only if $L_{u,\beta}$ is not smoothly slice.
\end{corollary}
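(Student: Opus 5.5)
Under the standing hypothesis $\DS(u,\beta)=\{A_+\}$, Lemma~\ref{lem:braid-intrinsic} gives $\Ro_{u,\beta}=D_+\cong(\C^*)^d$. So $\Ro_{u,\beta}$ is an intrinsic torus, and by definition it is unexpected exactly when it is not standard toric. The statement therefore reduces to showing that, for this intrinsic torus, ``not standard toric'' is equivalent to ``$L_{u,\beta}$ not smoothly slice''. Taking contrapositives, this amounts to: $\Ro_{u,\beta}$ has a dense $T_n$-orbit if and only if $L_{u,\beta}$ is smoothly slice.

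This last equivalence is exactly \textup{(i)}$\Leftrightarrow$\textup{(v)} of Theorem~\ref{thm:braid-standard}, which holds for every $u\leq\delta(\beta)$. The hypothesis $\DS(u,\beta)=\{A_+\}$ is used only to guarantee that the variety is an intrinsic torus, so that the word ``unexpected'' applies. Concretely, I would first invoke Lemma~\ref{lem:braid-intrinsic} to identify $\Ro_{u,\beta}$ with $(\C^*)^d$. I would then note that standard toricity, meaning a dense standard-torus orbit, is condition \textup{(i)}, and apply Theorem~\ref{thm:braid-standard}.

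As an optional remark, in this setting the orbit codimension $C_{u,\beta}=b_1(T_{u,\beta})$ of Theorem~\ref{thm:genus-codimension} measures the failure: the torus is unexpected exactly when $C_{u,\beta}>0$, which by the inequality $g_4\le C_{u,\beta}\le 2g_4$ happens exactly when $g_4(L_{u,\beta})>0$. There is no real obstacle here. The only point needing care is the definitional one: ``unexpected'' means an intrinsic torus that is not standard toric, so the intrinsic-torus part must be supplied by the hypothesis through Lemma~\ref{lem:braid-intrinsic} before Theorem~\ref{thm:braid-standard} can be applied.
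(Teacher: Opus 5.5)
Your proof is correct and follows the paper's argument: the standing hypothesis $\DS(u,\beta)=\{A_+\}$ makes $\Ro_{u,\beta}$ an intrinsic torus by Lemma~\ref{lem:braid-intrinsic}, and Theorem~\ref{thm:braid-standard} \textup{(i)}$\Leftrightarrow$\textup{(v)} then identifies the standard-toric cases with sliceness. The paper also uses \eqref{eq:braid-reflection-bound} for $A_+$ to note that failure of $d=n-c(\beta u^{-1})$ means strict inequality; this is a side remark that your optional $C_{u,\beta}$ comment parallels, and it is not needed for the equivalence.
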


\begin{proof}
The hypothesis gives an intrinsic torus by Lemma~\ref{lem:braid-intrinsic}, and Theorem \ref{thm:braid-standard} identifies the standard-toric cases with equality, equivalently with sliceness. Equation \eqref{eq:braid-reflection-bound} applied to $A_+$ gives $n-c(\beta u^{-1})\leq d$, so failure of equality is strict inequality.
\end{proof}

Equivalently, the unexpected cases are those with $\DS(u,\beta)=\{A_+\}$ and $d>n-c\bigl(\beta u^{-1}\bigr)$.

For a nonempty braid Richardson variety, compute $d=m-\ell(u)$ and the cycle count of the permutation $\beta u^{-1}$, and then test $d=n-c(\beta u^{-1})$.

\begin{proposition}\label{prop:braid-standard-algorithm}
This test decides standard toricity, the single-orbit property, and smooth sliceness simultaneously.
\end{proposition}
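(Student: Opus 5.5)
The statement is a repackaging of Theorem~\ref{thm:braid-standard}, so the plan is to check that the test $d=n-c(\beta u^{-1})$ is condition~\textup{(iv)} of that theorem. The theorem already identifies~\textup{(iv)} with each of the three properties: standard toricity~\textup{(i)}, the single-orbit property~\textup{(ii)}, and smooth sliceness~\textup{(v)}. The only content beyond citation is that the test can actually be carried out and that its inputs are the right ones.

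\emph{Step 1: the inputs are well defined and computable.} The number $d=m-\ell(u)$ needs only the word length $m$ and the Coxeter length of $u$. The permutation $\beta u^{-1}$ is obtained by multiplying the letters of $\beta$ in $S_n$, with no reduction and no Demazure product required, and then composing with $u^{-1}$. Its cycle count $c(\beta u^{-1})$ is read off directly. Nonemptiness, that is $u\leq\delta(\beta)$, is part of the hypothesis. It can itself be checked by computing $\delta(\beta)$ through the recursion $\delta_j=\max(\delta_{j-1},\delta_{j-1}s_{i_j})$, or by checking that the backward recursion \eqref{eq:pds-backward} ends at $u_0=e$.

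\emph{Step 2: the test matches the three properties.}
\begin{itemize}
\item Proposition~\ref{prop:braid-single-orbit} shows that equality forces $\Ro_{u,\beta}=D_+$ to be a single orbit.
\item Lemma~\ref{lem:hollow-forest} shows that equality is equivalent to $T_{u,\beta}$ being a forest. By Proposition~\ref{prop:braid-weights} and Lemma~\ref{lem:stable-open}, this is equivalent to having a dense orbit.
\item Lemma~\ref{lem:braid-quasipositive} gives $2g_4(L_{u,\beta})=d-n+c$, so sliceness holds exactly when equality holds.
\end{itemize}
Conversely, failure of the test is always strict inequality $d>n-c(\beta u^{-1})$, by \eqref{eq:braid-reflection-bound} applied to $A_+$. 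In that case $T_{u,\beta}$ has a cycle, the generic orbit has positive codimension $C_{u,\beta}=b_1(T_{u,\beta})>0$, and $g_4>0$. So all three properties fail together.

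No step is a real obstacle. The one point I would state explicitly is that the test never needs the positive distinguished subword or the graph $T_{u,\beta}$. That is exactly what Lemma~\ref{lem:hollow-forest} supplies: it replaces the forest check by a cycle count of a product that does not depend on the chosen subword. I would also note that the total cost is polynomial in $m$ and $n$.
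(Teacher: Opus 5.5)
Your proposal is correct and follows the paper's route. The paper's proof likewise identifies the test with condition \textup{(iv)} of Theorem~\ref{thm:braid-standard}, and you unpack that theorem's equivalences through Proposition~\ref{prop:braid-single-orbit}, Lemma~\ref{lem:hollow-forest}, and Lemma~\ref{lem:braid-quasipositive}. The paper additionally combines the test with the intrinsic test of Proposition~\ref{prop:braid-algorithm}, using Proposition~\ref{prop:braid-single-orbit} to see that standard toricity forces $N_{u,\beta}=1$, which yields the trichotomy standard torus, unexpected torus, or not a torus. That extra step goes beyond what the statement itself requires.
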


\begin{proof}
The standard test is Theorem \ref{thm:braid-standard}\,\textup{(iv)} and the intrinsic test is Proposition \ref{prop:braid-algorithm}. Standard toricity implies that the intrinsic test succeeds, by Proposition \ref{prop:braid-single-orbit}. Hence a positive standard test gives a standard torus, a negative standard test together with a positive intrinsic test gives an unexpected torus, and a negative intrinsic test gives a variety that is not a torus.
\end{proof}

Combined with Proposition \ref{prop:braid-algorithm}, the test distinguishes standard tori, unexpected tori, and varieties that are not tori.

\begin{example}\label{ex:braid-standard-nonreduced}
For $\beta=(s,s)$ in $S_2$ and $u=s$, Example \ref{ex:braid-demazure} gives $\Ro_{s,\beta}\cong\C^*$. Here the permutation represented by $\beta$ is $e$, and
\[
 d=1=2-c(s),\qquad
 L_{s,\beta}=\widehat{\sigma_1^2\sigma_1^{-1}}
           =\widehat{\sigma_1},
\]
so the standard action is transitive and the link is the unknot. For the same word with $u=e$, the solid graph has two parallel edges, the link is the positive Hopf link, and the variety is not an intrinsic torus.
\end{example}

\section{Finite Richardson varieties and positroid links}\label{sec:finite}

The standard-action criterion and genus formula specialize to ordinary Richardson varieties by taking the braid word to be reduced. Let $v\leq w$ in $S_n$, let $\mathbf w$ be a reduced word for $w$, and put $d=\ell(w)-\ell(v)$. The endpoint map is a $T_n$-equivariant isomorphism
\[
 \Ro_{v,\mathbf w}\xrightarrow{\ \sim\ }\Rfin_{v,w},
\]
and the associated braid Richardson link is the Richardson link
\[
 L_{v,\mathbf w}=\Lambda_{v,w}=\widehat{\beta(w)\beta(v)^{-1}}
\]
of Galashin--Lam \cite{galashin2024positroids}. We first compare the resulting slice criterion with the intrinsic classification, then use positivity to obtain the stronger conclusion for positroid links.

\subsection{Intrinsic versus standard toricity}

\begin{theorem}[Gorsky--Kim--Sherman-Bennett
{\cite[Theorem~1.2]{unexpected}}]\label{thm:GKSB}
For $v\leq w$ in $S_n$ the following are equivalent:
\begin{enumerate}[label=\textup{(\roman*)}]
 \item $\Rfin_{v,w}\cong(\C^*)^d$ as an algebraic variety;
 \item $\overline R_{v,w}$ is toric for some algebraic torus;
 \item $[v,w]$ contains no $2$-crown;
 \item $[v,w]$ is a lattice.
\end{enumerate}
\end{theorem}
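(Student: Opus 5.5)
The plan is to route all four conditions through the Deodhar criterion, applied with $\beta=\mathbf w$ a reduced word for $w$. By Lemma~\ref{lem:braid-intrinsic} and the endpoint isomorphism $\Ro_{v,\mathbf w}\cong\Rfin_{v,w}$, condition (i) is equivalent to $\DS(v,\mathbf w)=\{A_+\}$ and to $R_{v,w}(q)=(q-1)^d$.

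\emph{Step 1: (i)$\Leftrightarrow$(iii).} I will argue by induction on $d$, using the rank-two and rank-three structure of Bruhat intervals.

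If $[v,w]$ contains a $2$-crown $[x,y]$, I want to show $R_{v,w}\neq(q-1)^d$. The first approach is to avoid a direct coefficient argument and use Lemma~\ref{lem:codim-one} together with Corollary~\ref{cor:coefficient}. It suffices to produce a codimension-one Deodhar stratum. For the crown itself this is immediate, since $R_{x,y}=(q-1)^3+q(q-1)$.

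The difficulty is propagating this from $[x,y]$ up to $[v,w]$. I would try the standard recursion
\[
R_{v,w}=R_{vs,ws}\quad\text{or}\quad qR_{vs,ws}+(q-1)R_{v,ws}
\]
for $ws<w$. The aim is to show that crown-containment of $[v,w]$ forces crown-containment, or an extra term, in one of the smaller intervals. Nonnegativity of the coefficients of $R$ in the variable $q-1$ then prevents cancellation.

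Conversely, if $R_{v,w}\neq(q-1)^d$, Lemma~\ref{lem:codim-one} gives a distinguished subword $A$ with $b(A)=1$. Let $j$ be its unique decreasing step. The three prefix products around $j$, together with the positive subword, should exhibit elements $x<y$ with $\ell(y)-\ell(x)=3$ and $[x,y]\cong S_3$ inside $[v,w]$. I expect this to be the main obstacle. It may be cleaner to cite Dyer's result that intervals of length three are $k$-crowns, and that $R$-polynomials of short intervals are determined by the poset.

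\emph{Step 2: (iii)$\Leftrightarrow$(iv).} For lattice implies crown-free: a $2$-crown has two atoms with two distinct minimal upper bounds. This configuration survives in $[v,w]$ because rank is preserved, so $[v,w]$ would not be a lattice. For crown-free implies lattice: since Bruhat intervals are shellable (Björner--Wachs), I would use the fact that every rank-two subinterval is a diamond. A crown-free interval then has all rank-three subintervals Boolean. I would argue that joins of atoms exist, using the local criterion that a graded poset in which every pair of atoms has a unique minimal upper bound in every subinterval is a lattice.

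\emph{Step 3: (i)$\Leftrightarrow$(ii).} If $\Rfin_{v,w}$ is a torus, then $\overline R_{v,w}$ is an equivariant compactification with a dense orbit. Normality of Richardson varieties gives toricity. For the reverse direction, assume $\overline R_{v,w}$ is toric for some torus $T'$. Its open orbit is an open torus, and I would show that the boundary divisor $\overline R_{v,w}\setminus\Rfin_{v,w}$, a union of smaller Richardson varieties, is $T'$-stable. One route is that this divisor is anticanonical, and the toric boundary is the unique reduced $T'$-stable anticanonical divisor. Then $\Rfin_{v,w}$ is a $T'$-stable open subset containing the open orbit. Its complement must also have no codimension-one toric strata, since their number is $\dim$ plus the Picard rank. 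This forces $\Rfin_{v,w}$ to equal the open orbit, and Lemma~\ref{lem:open-torus} finishes the argument.

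Since this theorem is quoted from \cite{unexpected}, in the paper I would in the end simply cite it, and record that (i)$\Leftrightarrow$(iii) reappears in our affine proof via Lemma~\ref{lem:codim-one}.
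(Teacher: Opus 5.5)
The paper does not prove this theorem. It quotes it from \cite{unexpected}, credits (ii)$\Rightarrow$(iii) to Can--Saha \cite{CanSaha}, and re-derives only (i)$\Leftrightarrow$(iii), in Proposition~\ref{prop:crown-R}. Your closing decision to cite therefore matches the paper, but the self-contained argument you sketch has genuine gaps. In Step~1 the missing ingredient is Brenti's crown criterion \cite[Theorem~6.3]{brenti1994combinatorial}. For $d\geq2$ it says that $[v,w]$ is crown-free exactly when $[q]R_{v,w}(q)=(-1)^{d-1}d$. Reciprocity turns this into $[q^{d-1}]R_{v,w}(q)=-d$, which says that no distinguished subword has $b(A)=1$. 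Lemma~\ref{lem:codim-one} then gives $\DS(v,\mathbf w)=\{A_+\}$. Neither of your substitutes supplies this link between crowns and codimension-one strata. The recursion relates $R_{v,w}$ to $R_{vs,ws}$ and $R_{v,ws}$, but you give no way to carry a $2$-crown of $[v,w]$ into either smaller interval, and that transfer is exactly the content needed. Likewise, you assert but do not construct the $2$-crown spanned by the prefix products around the decreasing step of a $b(A)=1$ subword. Knowing that length-three intervals are $k$-crowns, and that short $R$-polynomials are poset invariants, does not tell you how a crown deep inside $[v,w]$ affects the coefficients of $R_{v,w}$.

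Step~3 fails as written. An abstract isomorphism $\Rfin_{v,w}\cong(\C^*)^d$ gives a translation action on $\Rfin_{v,w}$ only, and nothing makes it extend to $\overline R_{v,w}$. In general such actions do not extend. Blow up $\mathbb P^2$ at three collinear points of one coordinate line, away from the other two coordinate lines. The result is a smooth projective surface containing $(\C^*)^2$ as a dense open subset, yet its automorphism group contains no two-dimensional torus, so it is not toric. Normality is therefore not the point; a Richardson-specific argument is needed, and that is part of what the paper cites.

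In the converse, uniqueness of the toric boundary among reduced $T'$-stable anticanonical divisors identifies $\overline R_{v,w}\setminus\Rfin_{v,w}$ with the toric boundary only after you know it is $T'$-stable, which is what you set out to prove. Your divisor count also needs the number of boundary Richardson divisors to equal $d+\operatorname{rank}\operatorname{Cl}(\overline R_{v,w})$, which you have not shown.

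In Step~2, lattice$\Rightarrow$crown-free is fine and is Remark~\ref{rem:lattice}, but the converse is unproved. A crown-free rank-three interval need not be Boolean: $[s_2,s_2s_1s_3s_2]$ in $S_4$ is a $4$-crown. Your local criterion is correct: in a finite bounded poset where any two covers of a common element have a join, all joins exist. It still leaves the substantive step, namely that two covers of $x$ have a unique minimal upper bound in a crown-free $[v,w]$. The paper attributes that step to Dyer in general and to \cite{unexpected} in type $A$, and does not reprove it.
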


The implication from \textup{(ii)} to \textup{(iii)} is due to Can--Saha \cite[Corollary~3.4]{CanSaha}. An intrinsic Richardson torus need not have a dense orbit under the standard diagonal action. Theorem~\ref{thm:braid-standard} identifies the standard-toric cases through smooth sliceness of their Richardson links, giving the following specialization.

\begin{corollary}\label{cor:finite-slice}
Let $v\leq w$ in $S_n$, let $\mathbf w$ be a reduced word for $w$, and let
$d=\ell(w)-\ell(v)$. The following are equivalent.
\begin{enumerate}[label=\textup{(\roman*)}]
 \item $\overline R_{v,w}$ is toric for the standard diagonal torus.
 \item $\Rfin_{v,w}$ is a single standard-torus orbit.
 \item $T_{v,\mathbf w}$ is a forest.
 \item $d=n-c(wv^{-1})$.
 \item The Richardson link
       $\Lambda_{v,w}=\widehat{\beta(w)\beta(v)^{-1}}$ is smoothly
       slice.
\end{enumerate}
Moreover $g_4(\Lambda_{v,w})=\bigl(d-n+c(wv^{-1})\bigr)/2$.
\end{corollary}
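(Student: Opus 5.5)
This corollary is a direct specialization of Theorem~\ref{thm:braid-standard} and Lemma~\ref{lem:braid-quasipositive} to the reduced word $\beta=\mathbf w$, with $u=v$. The plan is to check that each hypothesis and each quantity in the braid statements matches the one in the Richardson statement, and then transfer (i)--(v) directly.

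First, since $\mathbf w$ is reduced, its Demazure product is $\delta(\mathbf w)=w$. The hypothesis $v\leq w$ is therefore exactly the hypothesis $u\leq\delta(\beta)$. The braid length is $m=\ell(w)$, so the braid dimension $m-\ell(u)$ equals $d=\ell(w)-\ell(v)$. The permutation represented by $\beta$ is $w$, so $c(\beta u^{-1})=c(wv^{-1})$. The reduced-word lift satisfies $\beta(w)=\mathbf w$ as braids, so $L_{v,\mathbf w}=\widehat{\beta(w)\beta(v)^{-1}}=\Lambda_{v,w}$. With these identifications, (iii), (iv), (v) of the corollary are literally (iii), (iv), (v) of Theorem~\ref{thm:braid-standard}. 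The genus formula is \eqref{eq:braid-chi}.

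Second, I transfer the geometric conditions along the endpoint map $\Ro_{v,\mathbf w}\to\Rfin_{v,w}$. This map is a $T_n$-equivariant isomorphism, since it sends $(F_0,\dots,F_m)\mapsto F_m$ and $T_n$ acts diagonally on galleries. Being a single orbit is preserved by an equivariant isomorphism, so (ii) of the corollary is (ii) of the theorem.

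For (i), I read "toric for the standard diagonal torus" as the dense-orbit property, as in the convention after Corollary~\ref{cor:braid-compactification}. The closed Richardson variety $\overline R_{v,w}$ is irreducible, $T_n$-stable, and contains $\Rfin_{v,w}$ as a dense $T_n$-stable open subset. Lemma~\ref{lem:stable-open} then shows that $\overline R_{v,w}$ has a dense $T_n$-orbit if and only if $\Rfin_{v,w}$ does. Equivalently, one can apply Corollary~\ref{cor:braid-compactification} with $\overline X=\overline R_{v,w}$. This gives condition (i) of Theorem~\ref{thm:braid-standard}.

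The main point needing care is the interpretation of (i). If "toric" were meant to include normality, I would cite the known normality of Richardson varieties, so that the dense-orbit version suffices. Apart from that, the argument is bookkeeping.
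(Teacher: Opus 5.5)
Your proposal is correct and follows the paper's proof. The paper likewise specializes Theorem~\ref{thm:braid-standard} to $\beta=\mathbf w$, $u=v$, transfers conditions along the equivariant endpoint isomorphism, and applies Lemma~\ref{lem:stable-open} to pass to $\overline R_{v,w}$. It then reads off the genus from \eqref{eq:braid-chi}; your additional checks, such as $\delta(\mathbf w)=w$ and $\beta(w)=\mathbf w$ as braids, are the bookkeeping the paper leaves implicit.
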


\begin{proof}[Proof of Corollary \ref{cor:finite-slice}]
Take $\beta=\mathbf w$ and $u=v$, so that $m=\ell(w)$ and the permutation represented by $\beta$ is $w$. Theorem \ref{thm:braid-standard} identifies standard toricity of $\Rfin_{v,w}$, the single-orbit property, the forest condition, the equality $d=n-c(wv^{-1})$, and smooth sliceness of $\Lambda_{v,w}$. Since $\Rfin_{v,w}$ is a dense stable open subset of $\overline R_{v,w}$, Lemma \ref{lem:stable-open} gives the equivalence with standard toricity of the closed Richardson variety. Equation \eqref{eq:braid-chi} specializes to the stated genus formula.
\end{proof}

For comparison with the Bruhat-interval-polytope formulation, choose a saturated chain
\[
 v=x_0\lessdot x_1\lessdot\cdots\lessdot x_d=w,
 \qquad x_j=x_{j-1}(a_j\ b_j),
\]
and form the multigraph $G_C$ on $[n]$ with one edge per cover. Let $\pi_0(G_C)$ be the set of connected components of $G_C$. The generic standard-torus orbit has dimension $n-\#\pi_0(G_C)$ \cite[Theorem~4.6]{tsukerman2015bruhat}; see also \cite[Section~4]{lee2021toric}.

\begin{corollary}[{\cite[Proposition~4.12]{tsukerman2015bruhat}}]\label{cor:finite-standard}
The conditions of Corollary \ref{cor:finite-slice} hold if and only if some, equivalently every, maximal-chain multigraph $G_C$ is a forest.
\end{corollary}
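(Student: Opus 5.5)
Since the generic standard-torus orbit of $\Rfin_{v,w}$ has dimension $n-\#\pi_0(G_C)$ for every maximal chain $C$ \cite[Theorem~4.6]{tsukerman2015bruhat}, I will compare $G_C$ directly with $T_{v,\mathbf w}$ and then use Corollary~\ref{cor:finite-slice}. The cleanest route avoids the Tsukerman--Williams computation altogether and instead uses the transposition factorization of Lemma~\ref{lem:omitted-factorization}. The chain gives
\[
 w=v\,(a_1\ b_1)(a_2\ b_2)\cdots(a_d\ b_d),
\]
so
\[
 wv^{-1}=\prod_{j=1}^{d}\bigl(x_{j-1}(a_j\ b_j)x_{j-1}^{-1}\bigr)\cdot\text{(reordered)},
\]
or more simply $v^{-1}w$ is a product of the $d$ transpositions $(a_j\ b_j)$. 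Since $c(v^{-1}w)=c(wv^{-1})$ by conjugacy, $wv^{-1}$ is a product of $d$ transpositions whose supports are exactly the edges of $G_C$, up to conjugation by the fixed permutation $v$. The relabeling by $v$ is a graph isomorphism, so the forest property is unaffected.

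The argument of Lemma~\ref{lem:hollow-forest} then applies verbatim to $G_C$. If $G_C$ is a forest, each successive transposition joins two different components of the earlier edges, so it merges two cycles, and $c=n-d$. If $G_C$ is not a forest, then $c(v^{-1}w)\ge\#\pi_0(G_C)>n-d$. Hence $G_C$ is a forest if and only if $d=n-c(wv^{-1})$, which is condition (iv) of Corollary~\ref{cor:finite-slice}. Because (iv) does not depend on $C$, this proves both the equivalence and the phrase ``some, equivalently every''.

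The main point to check is the bookkeeping of left versus right multiplication. The covers are written $x_j=x_{j-1}(a_j\ b_j)$, so the transpositions act on positions. The product $(a_1\ b_1)\cdots(a_d\ b_d)$ therefore equals $x_0^{-1}x_d=v^{-1}w$ exactly, and the order of the factors matters only in the sequential cycle-merging argument, which I run in the order $j=1,\dots,d$. With this set up there is no need to relate $G_C$ to the solid crossings of $T_{v,\mathbf w}$. I will add a remark that one particular chain, read from the positive distinguished subword, recovers $T_{v,\mathbf w}$ up to relabeling.
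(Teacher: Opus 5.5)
Your proof is correct, but it takes a different route from the paper.

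\textbf{The paper's route.} The paper proves the corollary in one line from the Tsukerman--Williams orbit-dimension formula \cite[Theorem~4.6]{tsukerman2015bruhat}. For every saturated chain the generic standard orbit has dimension $n-\#\pi_0(G_C)$. Since $\overline R_{v,w}$ has dimension $d$ and $G_C$ has $d$ edges, standard toricity, i.e.\ $n-\#\pi_0(G_C)=d$, is exactly the forest condition for every $C$ at once.

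\textbf{Your route.} You never actually use that formula, so your opening citation of it is superfluous. Instead you write $v^{-1}w=(a_1\ b_1)\cdots(a_d\ b_d)$ and run the cycle-merging argument of Lemma~\ref{lem:hollow-forest}. This shows directly that $G_C$ is a forest if and only if $d=n-c(wv^{-1})$, which is condition \textup{(iv)} of Corollary~\ref{cor:finite-slice} and visibly independent of $C$. The corollary thus becomes a purely combinatorial consequence of Corollary~\ref{cor:finite-slice} with no geometric input. Your argument recovers \cite[Proposition~4.12]{tsukerman2015bruhat} independently of their orbit computation. It also applies to any factorization of $v^{-1}w$ into $d$ transpositions, whether or not it comes from a chain.

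\textbf{What the paper's route buys.} It is quantitative: $\#\pi_0(G_C)$ is chain-independent, and $b_1(G_C)$ equals the generic orbit codimension. Outside the forest case your argument gives only $\#\pi_0(G_C)\le c(wv^{-1})$.

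\textbf{A small correction.} Your displayed product for $wv^{-1}$ with the ``reordered'' annotation should say that the conjugates $x_{j-1}(a_j\ b_j)x_{j-1}^{-1}$ multiply to $wv^{-1}=x_dx_0^{-1}$ in decreasing order of $j$. Since you pass to $v^{-1}w$ immediately, this does not affect the argument.
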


\begin{proof}
The closed Richardson variety has dimension $d$, so it is standard toric exactly when $n-\#\pi_0(G_C)=d$. As $G_C$ has $d$ edges counted with multiplicity, this is the forest condition, and the dimension formula applies to every saturated chain.
\end{proof}

\begin{remark}\label{rem:chain-solid}
The multigraphs $G_C$ and $T_{v,\mathbf w}$ need not be isomorphic. For $v=e$ and $\mathbf w=s_1s_2s_3$, the solid graph is the path with edges $12,23,34$, whereas the chain $e\lessdot s_3\lessdot s_2s_3\lessdot s_1s_2s_3$ gives the star with edges $34,24,14$; see Figure \ref{fig:chain-solid}.

For $\mathbf w=s_{i_1}\cdots s_{i_m}$, write $A_+=(p_1,\ldots,p_m)$ for the positive distinguished $v$-subword, and set $v_0=e$ and $v_j=p_1\cdots p_j$. Fill the omitted positions from right to left. At a position $j$ with $p_j=e$, write $q_j=s_{i_{j+1}}\cdots s_{i_m}$; the step is
\[
 x=v_{j-1}q_j\lessdot v_{j-1}s_{i_j}q_j=\tau_jx.
\]
PDS positivity and reducedness of the suffix $s_{i_j}q_j$ imply that each insertion is a cover. Thus the solid graph is the left-reflection graph of this chain, or equivalently the right-reflection graph of its inverse chain in $[v^{-1},w^{-1}]$. Tsukerman--Williams use right reflections \cite[Definition~4.5]{tsukerman2015bruhat}. Conjugating the subgroups generated by the left and right labels, and applying their chain-independence result \cite[Corollary~4.8]{tsukerman2015bruhat}, shows that $a\mapsto v(a)$ sends the components of $G_C$ to those of $T_{v,\mathbf w}$ for every $C$. Hence $c(G_C)=c(T_{v,\mathbf w})$, and the two forest criteria agree.
\end{remark}

\begin{figure}[ht!]
\centering
\begin{tikzpicture}[vertex/.style={circle,draw,fill=white,inner sep=0pt,minimum size=15pt,font=\small},line width=0.6pt]
\node at (1.65,1.95) {$T_{v,\mathbf w}$};
\node[vertex] (t1) at (0,0.6) {$1$};
\node[vertex] (t2) at (1.1,0.6) {$2$};
\node[vertex] (t3) at (2.2,0.6) {$3$};
\node[vertex] (t4) at (3.3,0.6) {$4$};
\draw (t1)--(t2)--(t3)--(t4);
\begin{scope}[xshift=6cm]
\node at (1.65,1.95) {$G_C$};
\node[vertex] (g1) at (0,0.05) {$1$};
\node[vertex] (g2) at (1.65,1.15) {$2$};
\node[vertex] (g3) at (3.3,0.05) {$3$};
\node[vertex] (g4) at (1.65,0.05) {$4$};
\draw (g4)--(g1) (g4)--(g2) (g4)--(g3);
\end{scope}
\end{tikzpicture}
\caption{The solid graph for $v=e$ and $\mathbf w=s_1s_2s_3$, and the graph of $C:e\lessdot s_3\lessdot s_2s_3\lessdot s_1s_2s_3$. Both are connected forests, but they are not isomorphic. See Remark~\ref{rem:chain-solid}}
\label{fig:chain-solid}
\end{figure}

\subsection{The positroid specialization}

For positroid links, the slice criterion has a stronger consequence: smooth sliceness forces the link to be an unlink. We prove this by showing that the Seifert and slice genera agree.

Suppose that $w$ is $k$-Grassmannian, and let $f=f_{v,w}$ be the corresponding bounded affine permutation. The projection $\Rfin_{v,w}\to\Pio_f$ is a $T_n$-equivariant isomorphism \cite{knutson2013positroid,galashin2024positroids}, and $\Lambda_f:=\Lambda_{v,w}$ is the positroid link of $f$.

The grid-link presentation of \cite[Remark~2.2 and Theorem~2.5]{galashin2024plabic} gives an ordinary link diagram in $S^3$ with all $X$-markings on the northwest--southeast diagonal. Fixed points contribute split unknots and may be deleted before
drawing the grid \cite[Section~4.5]{galashin2024plabic}. An example of a grid link can be seen in Figure~\ref{fig:Grid-Link}.
\begin{figure}
    \centering

\begin{tikzpicture}[
    scale=1.0,
    ->-/.style={postaction={decorate, decoration={markings, mark=at position 0.55 with {\arrow{>}}}}},
    ->>-/.style={postaction={decorate, decoration={markings, mark=at position 0.55 with {\arrow{>>}}}}},
    blue path/.style={blue!90!black, very thick, -{Stealth[length=2.5mm]}},
    red path/.style={red!90!black, very thick, -{Stealth[length=2.5mm]}},
    overcross/.style={preaction={draw, white, line width=5pt, -}}
]
\def\ht{9}
    \def\permA{9}
    \def\permB{6}
    \def\permC{4}
    \def\permD{7}
    \def\permE{1}
    \def\permF{8}
    \def\permG{3}
    \def\permH{2}
    \def\permI{5}

    \draw[gray!30, thin, step=1] (0,0) grid (\ht,\ht);

    \draw[thick] (0,0) -- (\ht,0);
    \draw[thick] (0,\ht) -- (\ht,\ht);
    \draw[thick] (0,0) -- (0,\ht);
    \draw[thick] (\ht,0) -- (\ht,\ht);

    \foreach \y in {1,...,\ht} { \node[label={[yshift=-12]X}] (X\y) at (\ht+0.5-\y,-0.5+\y) {}; }

    \node[label={[yshift=-12]O}] (O1) at (\ht+0.5- \permA,-0.5+ 1) {};
    \node[label={[yshift=-12]O}] (O2) at (\ht+0.5- \permB,-0.5+ 2) {};
    \node[label={[yshift=-12]O}] (O4) at (\ht+0.5- \permD,-0.5+ 4) {};

    \node[label={[yshift=-12]O}] (O3) at (\ht+0.5- \permC,-0.5+ 3) {};
    \node[label={[yshift=-12]O}] (O5) at (\ht+0.5- \permE,-0.5+ 5) {};
    \node[label={[yshift=-12]O}] (O6) at (\ht+0.5- \permF,-0.5+ 6) {};
    \node[label={[yshift=-12]O}] (O7) at (\ht+0.5- \permG,-0.5+ 7) {};
    \node[label={[yshift=-12]O}] (O8) at (\ht+0.5- \permH,-0.5+ 8) {};
    \node[label={[yshift=-12]O}] (O9) at (\ht+0.5- \permI,-0.5+ 9) {};

    \draw[-{Stealth}, very thick] (O1)+({-0.25*abs(1-\permA)/(1-\permA)},0) to (X1);
    \draw[-{Stealth}, very thick] (O2)+({-0.25*abs(2-\permB)/(2-\permB)},0) to (X2);
    \draw[-{Stealth}, very thick] (O4)+({-0.25*abs(4-\permD)/(4-\permD)},0) to (X4);
    \draw[-{Stealth}, very thick] (O3)+({-0.25*abs(3-\permC)/(3-\permC)},0) to (X3);
    \draw[-{Stealth}, very thick] (O5)+({-0.25*abs(5-\permE)/(5-\permE)},0) to (X5);
    \draw[-{Stealth}, very thick] (O6)+({-0.25*abs(6-\permF)/(6-\permF)},0) to (X6);
    \draw[-{Stealth}, very thick] (O7)+({-0.25*abs(7-\permG)/(7-\permG)},0) to (X7);
    \draw[-{Stealth}, very thick] (O8)+({-0.25*abs(8-\permH)/(8-\permH)},0) to (X8);
    \draw[-{Stealth}, very thick] (O9)+({-0.25*abs(9-\permI)/(9-\permI)},0) to (X9);

    \draw[-{Stealth}, very thick, overcross] (X\permD) to (O4);
    \draw[-{Stealth}, very thick, overcross] (X\permB) to (O2);
    \draw[-{Stealth}, very thick, overcross] (X\permA) to (O1);

    \draw[-{Stealth}, very thick, overcross] (X\permF) to (O6);
    \draw[-{Stealth}, very thick, overcross] (X\permE) to (O5);
    \draw[-{Stealth}, very thick, overcross] (X\permC) to (O3);
    \draw[-{Stealth}, very thick, overcross] (X\permG) to (O7);
    \draw[-{Stealth}, very thick, overcross] (X\permH) to (O8);
    \draw[-{Stealth}, very thick, overcross] (X\permI) to (O9);

\end{tikzpicture}

    \caption{Grid link of the permutation $(195)(268)(347)$.}
    \label{fig:Grid-Link}
\end{figure}

\begin{lemma}\label{lem:positroid-positive}
Every positroid link admits an oriented diagram with only positive crossings.
\end{lemma}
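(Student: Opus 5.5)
We use the grid-link presentation of \cite[Remark~2.2 and Theorem~2.5]{galashin2024plabic}. After deleting the split unknots that come from fixed points of $f$, $\Lambda_f$ is the grid link of a permutation $\pi$ of $[N]$. All $X$-markings lie on the northwest--southeast diagonal: the $X$ in row $y$ sits in column $N+1-y$. The $O$-marking in row $y$ sits in column $N+1-\pi(y)$. Following the standard grid convention, we orient each horizontal segment from $O$ to $X$ and each vertical segment from $X$ to $O$, with vertical segments always passing over horizontal ones, as in Figure~\ref{fig:Grid-Link}. The plan is to check that every crossing of this diagram has the same sign. If the sign is negative, we replace the diagram by its mirror, or reverse every orientation, using the convention fixed in \cite{galashin2024plabic}.

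A crossing occurs where a vertical segment in column $x$ meets a horizontal segment in row $y$ in their interiors. Because the vertical strand is always over, the sign depends only on the two directions:
\begin{itemize}
\item which way the vertical strand points (up or down);
\item which way the horizontal strand points (left or right).
\end{itemize}
Put $y_X=N+1-x$ for the row of the $X$ in column $x$, and let $y_O$ be the row of its $O$. The vertical segment runs from $y_X$ to $y_O$, so it points up exactly when $y_O>y_X$. In row $y$, the $X$ is in column $N+1-y$ and the $O$ is in column $c_O$. The horizontal segment points right exactly when $N+1-y>c_O$.

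The key step is to show that the diagonal position of the $X$'s forces a correlation between these two directions. Since the crossing lies inside the vertical segment, $y$ lies strictly between $y_X$ and $y_O$. Suppose the vertical strand points up, so $y_X<y<y_O$. Then $y>y_X=N+1-x$, hence $x>N+1-y$. So column $x$ lies strictly left of the $X$ in row $y$. Since the crossing lies inside the horizontal segment, $x$ lies strictly between $c_O$ and $N+1-y$, and therefore $c_O>N+1-y$: the horizontal segment points left, from $O$ toward $X$. Symmetrically, if the vertical strand points down then $y<N+1-x$, so $x<N+1-y$ and the horizontal segment points right.

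Thus every crossing has one of only two configurations: (up, left) or (down, right). These two configurations are rotations of each other by $180^\circ$, so they have the same sign.

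The only real obstacle is bookkeeping: pinning down the orientation and over/under conventions of \cite{galashin2024plabic}, and checking that the common sign is positive rather than negative. We expect Figure~\ref{fig:Grid-Link} to settle this. If the sign under the stated convention comes out negative, we note that \cite{galashin2024plabic} identifies $\Lambda_f$ with the grid link using the orientation under which it is positive. Equivalently, $\Lambda_f=\Lambda_{v,w}$ is the closure of $\beta(w)\beta(v)^{-1}$ and has positive writhe contributions, which fixes the sign.

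Finally, the split unknots removed at the start have crossingless diagrams. Their disjoint union with the diagram above is an oriented diagram of the full link $\Lambda_f$ with only positive crossings.
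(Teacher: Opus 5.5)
Your construction and crossing analysis coincide with the paper's. You use the same grid diagram from \cite{galashin2024plabic}, with vertical segments over horizontal ones, oriented $X\to O$ vertically and $O\to X$ horizontally. Your inequalities correctly show that every crossing is either (vertical up, horizontal left) or (vertical down, horizontal right); the paper phrases this as above versus below the diagonal. Fixed points contribute crossingless split unknots, as in the paper.

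The gap is the final step. You prove only that all crossings share one sign and leave that sign undetermined. Positivity is the entire content of the lemma, and none of your fallbacks can supply it:
\begin{enumerate}[label=\textup{(\alph*)},nosep]
 \item Mirroring changes the link. It would give a positive diagram of the mirror of $\Lambda_f$, e.g.\ the negative Hopf link instead of the positive Hopf link $\Lambda_{f_{2,4}}$.
 \item Reversing every orientation reverses both strands at each crossing, so it leaves every sign unchanged.
 \item Appealing to an orientation convention in \cite{galashin2024plabic} assumes the conclusion.
 \item The closure of $\beta(w)\beta(v)^{-1}$ has $\ell(v)$ negative crossings coming from $\beta(v)^{-1}$, so it does not fix the sign either.
\end{enumerate}

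The missing check is one line. Take the standard positive crossing, whose over-strand runs southwest to northeast and whose under-strand runs southeast to northwest. Rotating it counterclockwise by $45^\circ$ makes the over-strand point up and the under-strand point left, which is your (up, left) configuration. Equivalently, travelling up the vertical over-strand, one sees the horizontal under-strand pass from right to left. Hence (up, left) is positive, and (down, right) is its $180^\circ$ rotation, so it is positive as well. With this computation in place of the fallbacks, your proof is complete.
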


\begin{proof}
In the grid diagram above, vertical segments pass over horizontal segments, and the orientation runs vertically from $X$ to $O$ and horizontally from $O$ to $X$. Above the diagonal a crossing has its vertical segment oriented upward and its horizontal segment oriented leftward; below the diagonal both directions reverse. Both configurations are positive. This is the crossing argument of \cite[Theorem~3.3]{arndt2025diagonal}, which does not depend on the number of link components. Adding the split unknots for fixed points introduces no crossings.
\end{proof}

The following corollary combines Lemma \ref{lem:positroid-positive} with Rudolph's results on positive links and quasipositive Seifert surfaces \cite{rudolph1999positive,rudolph1998quasipositive}.

\begin{corollary}\label{cor:positroid-genera}
For every positroid link $\Lambda_f$, $g_3(\Lambda_f)=g_4(\Lambda_f)$.
\end{corollary}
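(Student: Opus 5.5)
The inequality $g_4\le g_3$ holds for every link: push a Seifert surface in $S^3$ into $B^4$. With the paper's normalization both genera are $(c-\chi)/2$ for surfaces with no closed components, so pushing in preserves $\chi$ and gives $\chi_4(\Lambda_f)\ge\chi_3(\Lambda_f)$. It therefore suffices to construct a Seifert surface $F\subset S^3$ with $\chi(F)=\chi_4(\Lambda_f)$.

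For this I would take the positive diagram $D$ from Lemma~\ref{lem:positroid-positive} and apply Seifert's algorithm to it. The resulting surface $F_D$ has Euler characteristic $s(D)-x(D)$, where $s$ is the number of Seifert circles and $x$ the number of crossings. Rudolph \cite{rudolph1999positive} shows that a positive diagram presents the closure of a strongly quasipositive braid, and that the Seifert-algorithm surface is a quasipositive Seifert surface \cite{rudolph1998quasipositive}. Concretely, $F_D$ is isotopic to a braided surface with $s(D)$ disks and $x(D)$ positive embedded bands, so the link is the closure of a strongly quasipositive braid on $s(D)$ strands with $x(D)$ band generators. By the slice--Bennequin inequality, which is the same input from \cite[Section~3]{Rudolph1993} used in Lemma~\ref{lem:braid-quasipositive}, such a surface maximizes $\chi$ among all smooth surfaces in $B^4$ with the same boundary and no closed components. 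Hence
\[
\chi_3(\Lambda_f)\ \ge\ \chi(F_D)=\chi_4(\Lambda_f)\ \ge\ \chi_3(\Lambda_f),
\]
and therefore $g_3=g_4$.

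The main obstacle is the normalization, since the paper allows disconnected surfaces. The equality $\chi_4=s-x$ must be valid for surfaces with several components, and $F_D$ may itself be disconnected, for example when the diagram is split. I would handle this by splitting $D$ into its connected diagram components. The positive grid diagram may be split, and the fixed-point unknots are split components. Each piece is again positive, so $F_D$ is a disjoint union of connected quasipositive surfaces, one for each connected piece. The slice--Bennequin bound, applied in the form $\chi_4(\widehat\gamma)=n-d$ for a quasipositive braid $\gamma$ on $n$ strands with $d$ factors, holds for possibly disconnected surfaces without closed components. This is exactly how it is used in Lemma~\ref{lem:braid-quasipositive}. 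Both $\chi$ and the component count are additive under split union, so the equality carries over.

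The fixed-point unknots can also be checked directly: each bounds a disk in $S^3$, and adding it adds $1$ to both $\chi_3$ and $\chi_4$.
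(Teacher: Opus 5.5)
Your proposal is correct and follows the paper's proof. The positive diagram of Lemma~\ref{lem:positroid-positive} gives, by Rudolph's theorem, a quasipositive Seifert surface $S$ with $\chi(S)=\chi_4(\Lambda_f)$; the paper cites \cite[Lemma~5.2.1]{rudolph1998quasipositive} where you invoke slice--Bennequin. The sandwich $\chi(S)\le\chi_3\le\chi_4=\chi(S)$ then finishes the argument.

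Your split-union paragraph is unnecessary, since Rudolph's result and the slice--Bennequin bound apply directly to the whole, possibly disconnected, quasipositive surface. It also should not be phrased as additivity of $\chi_4$, which fails for general split links: for the trefoil $K$, the split link $K\sqcup -\overline{K}$ bounds an annulus. Likewise, you only need $\chi(F_D)=s(D)-x(D)$, not the specific counts of $s(D)$ disks and $x(D)$ bands.
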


\begin{proof}
By Lemma \ref{lem:positroid-positive}, the link $\Lambda_f$ admits a positive diagram. Rudolph's theorem \cite{rudolph1999positive} therefore gives a quasipositive Seifert surface $S\subset S^3$ with boundary $\Lambda_f$. By \cite[Lemma~5.2.1]{rudolph1998quasipositive}, its Euler characteristic satisfies $\chi_4(\Lambda_f)=\chi(S)$. Hence
\[
 \chi(S)\leq\chi_3(\Lambda_f)\leq\chi_4(\Lambda_f)=\chi(S).
\]
Thus $\chi_3(\Lambda_f)=\chi_4(\Lambda_f)$, and \eqref{eq:link-genera} gives the genus equality.
\end{proof}

The equality $g_3=g_4$ shows that a positroid link is smoothly slice exactly when it is an unlink. Subsection~\ref{subsec:slice-richardson} gives a nontrivial slice Richardson knot, showing that this conclusion does not extend to all Richardson links.

The orbit codimension also has a description in terms of a reduced plabic graph. Let $G$ be such a graph for $f$, with all $n$ boundary vertices included and with no disk-boundary arcs adjoined. Set $d=\dim\Pio_f$, let $r$ be the generic standard-torus orbit dimension, and let $c(G)$ be the number of connected components of $G$. Its cycle rank is $b_1(G)=|E(G)|-|V(G)|+c(G)$. The following formula gives the plabic forest criterion for standard toricity.
\begin{proposition}\label{prop:plabic-forest}
\begin{equation}\label{eq:plabic-complexity}
 d-r=b_1(G).
\end{equation}
\end{proposition}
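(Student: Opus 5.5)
We prove \eqref{eq:plabic-complexity} by comparing both sides with quantities already computed for the Richardson model. Choose $v\leq w$ with $w$ $k$-Grassmannian and $f=f_{v,w}$, and a reduced word $\mathbf w$. By Theorem~\ref{thm:genus-codimension}, using the equivariant isomorphism $\Rfin_{v,w}\cong\Pio_f$, we have $d-r=C_{v,\mathbf w}=d-n+\kappa$, where $\kappa$ is the number of components of $T_{v,\mathbf w}$. It therefore suffices to show
\[
 b_1(G)=d-n+\kappa .
\]
Both sides are invariant under the relevant moves. The left side is invariant under square moves and contraction/uncontraction of same-colored vertices, since these preserve $|E|-|V|$ and connectivity. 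The right side does not depend on $\mathbf w$, by Remark~\ref{rem:chain-solid}. Hence we may compute with one convenient reduced plabic graph.

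Our first choice is the Le-diagram (or bridge-decomposition) plabic graph $G$ attached to $\mathbf w$ and the positive distinguished subword of $v$. This graph is built from $n$ boundary vertices by adding one bridge per omitted position $j\in J^\circ$, plus lollipops for fixed points. Each bridge adds a white and a black vertex. These vertices are inserted on existing edges, with one new edge joining them. The net effect is that $|V|$ increases by $2$ and $|E|$ increases by $3$, so $|E|-|V|$ increases by $1$ per bridge.

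The starting configuration consists of $n$ boundary vertices with the identity boundary strands. We account for it as an initial graph whose boundary leaves lie in $n$ components (the lollipop conventions must be fixed here), contributing $|E|-|V|=-n+(\text{number of starting edges})$. Summing over bridges then gives $|E(G)|-|V(G)|=d-n$, up to the initial contribution that we must check cancels.

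It remains to show $c(G)=\kappa$. The $j$-th bridge connects the two wires that occupy positions $i_j,i_j+1$ after applying the selected prefix. These are exactly the strands $a_j,b_j$ of Definition~\ref{def:solid-graph}, with the strand labels tracked by $u_{j-1}$. Contracting each wire of the bridge graph to a single vertex gives $T_{v,\mathbf w}$. This is the same contraction used in the introduction for the 3D plabic graph spine. Since the contracted pieces are disjoint trees (paths), contraction is a homotopy equivalence, so $c(G)=\kappa$ and $b_1(G)=b_1(T_{v,\mathbf w})$ directly. This homotopy argument gives the identity without separate edge bookkeeping.

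The main obstacle is justifying that a reduced plabic graph for $f$ (with the stated boundary conventions) is obtained from the wiring of the positive distinguished subword by bridges, with wires becoming disjoint trees. This requires matching the Grassmannian projection and boundary normalization with bounded affine permutation $f_{v,w}$. We expect it to follow from the bridge decomposition / Le-diagram construction of reduced plabic graphs together with the identification of solid crossings with bridges in \cite[Sections~3.1--3.2]{GLSBS}. Fixed points need care: lollipops become isolated boundary-attached components on both sides. After this, the result is also consistent with \cite[Propositions~3.15--3.16]{lukowski2023positive} in the forest case.
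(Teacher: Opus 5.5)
Your reduction is sound. Theorem~\ref{thm:genus-codimension}, together with the equivariant isomorphism $\Rfin_{v,w}\cong\Pio_f$, gives $d-r=d-n+\kappa$, and $b_1(G)$ is invariant under the plabic moves. The gap is the step $c(G)=\kappa$, which you try to obtain by contracting wires. The graph you actually describe has the wires of the selected subword and a bridge joining $a_j$ and $b_j$ at each omitted position. That is the 3D plabic graph of \cite{GLSBS}, and for it the contraction does give $T_{v,\mathbf w}$, as in the introduction. But it is not a reduced plabic graph for $f$. It has $2n$ wire ends rather than $n$ boundary vertices, and for $v\neq e$ its wires cross at the selected positions, so it is not a plabic graph in the disk.

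The reduced graphs produced by the constructions you invoke have different bridges. A Le-diagram graph joins the row line and column line of each $+$ box. A BCFW bridge joins the wires of cyclically adjacent boundary vertices (up to lollipops), in an order given by a reduced factorization of $f$. Neither joins the strands labelled by $u_{j-1}$. Concretely, take the top cell of $\Gr(2,4)$ with $v=e$ and $\mathbf w=s_2s_1s_3s_2$. Then $T_{e,\mathbf w}$ is the path $1-2-3-4$ with $\{2,3\}$ doubled. Contracting the four legs of the square plabic graph instead gives the $4$-cycle $1-2-3-4-1$. The two graphs happen to have the same $b_1$ and number of components, but that is what must be proved, and you supply no homotopy equivalence between them.

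The missing ingredient is an independent description of the components of a reduced plabic graph. The paper uses \cite[Proposition~3.13]{lukowski2023positive}: these components are the connected components of the positroid $\cM(f)$. The orbit formula of \cite{gelfand1987combinatorial} then gives $r=n-c(G)$. With that input the detour through the solid graph is unnecessary. Adjoining the $n$ boundary arcs and applying Euler's formula gives $F=|E|-|V|+n+1$ disk regions. The plabic parametrization gives $d=F-1$, so $|E|-|V|=d-n$ for every reduced $G$. Hence $d-r=|E|-|V|+c(G)=b_1(G)$, with no special graph chosen and no lollipop bookkeeping. If you want to keep your route, you must prove directly that some reduced plabic graph for $f$ decomposes into $n$ disjoint trees plus $d$ edges whose contraction has exactly $\kappa$ components. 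One candidate is the row--column incidence graph of the $+$ boxes. That is a genuine combinatorial statement, comparable to the chain-independence argument in Remark~\ref{rem:chain-solid}, and it does not follow from the spine argument in the introduction.
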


\begin{proof}
Write $e=|E(G)|$, $v_{\mathrm{int}}=|V(G)|-n$, and $c=c(G)$. Every component meets the disk boundary. Adjoining its $n$ boundary arcs makes the graph connected, so Euler's formula gives $F=e-v_{\mathrm{int}}+1$ disk regions. The reduced plabic parametrization gives $d=F-1$, and graph components agree with positroid components \cite[Proposition~3.13 and the proof of Proposition~3.16]{lukowski2023positive}. The matroid-polytope orbit formula \cite{gelfand1987combinatorial} therefore gives $r=n-c$. Subtraction yields
\[
 d-r=e-v_{\mathrm{int}}-n+c=b_1(G).
\]
\end{proof}

The closed variety $\Pi_f=\overline{\Pio_f}$ is standard toric exactly when $d=r$, or equivalently when $G$ is a forest. This criterion is independent of the reduced representative. If $\cM(f)$ is connected, then $c(G)=1$ and the criterion is that $G$ is a tree.

\begin{proof}[Proof of Theorem \ref{thm:intro-positroid-link}]
Corollary~\ref{cor:positroid-genera} gives the genus equality and \textup{(iii)}$\Leftrightarrow$\textup{(iv)}. Under the equivariant Richardson--positroid isomorphism, Theorem \ref{thm:braid-standard} identifies the existence of a dense standard orbit on $\Pio_f$, the single-orbit property, and smooth sliceness of $\Lambda_f$. This proves \textup{(i)}$\Leftrightarrow$\textup{(iii)}.

Each nonempty patch $\Pio_{f,I}$ is a $T_n$-stable open subset of $\Pio_f$, because $\Delta_I$ is a semi-invariant. If $\Pio_f$ is a single orbit, every nonempty patch therefore equals $\Pio_f$. Conversely, if some patch is a single orbit, that orbit is dense in $\Pio_f$ by Lemma \ref{lem:stable-open}, so the preceding paragraph shows that $\Pio_f$ is a single orbit. This proves \textup{(i)}$\Leftrightarrow$\textup{(ii)}.
\end{proof}

\section{Affine Richardson varieties, patches, and Deodhar decompositions}
\label{sec:affine-patches}

We now return to intrinsic tori. On a fixed positroid variety, the standard action has a dense orbit on one nonempty Pl\"ucker patch exactly when it has a dense orbit on every nonempty patch. Whether a patch is an intrinsic torus can vary with the patch. Snider's isomorphism identifies each patch with an affine Richardson variety, allowing us to study this variation through the Deodhar decomposition. After recalling the affine setup, we prove the crown criterion in Sections~\ref{sec:crowns} and~\ref{sec:affine-proof}.

\subsection{Affine permutations and Richardson varieties}

An affine permutation is a bijection $f:\mathbb Z\to\mathbb Z$ with $f(i+n)=f(i)+n$. Its average is the integer $k$ determined by $\sum_{i=1}^n(f(i)-i)=kn$. Affine permutations of a fixed average form one component of the extended affine symmetric group. Right multiplication by $s_i$ interchanges $i+rn$ and $i+1+rn$ for every $r\in\mathbb Z$. In window notation the affine generator acts by
\[
 [a_1,\ldots,a_n]s_0=[a_n-n,a_2,\ldots,a_{n-1},a_1+n].
\]  If $f\leq g$ in Bruhat
order, left translation by their common length-zero factor identifies $[f,g]$ with an interval in the affine Coxeter group; we use this identification whenever reduced words or simple reflections are needed.

Let $X_g^\circ$ be the affine Schubert cell and $(X^f)^\circ$ the opposite Schubert cell. Their intersection
\[
  \Ro_{f,g}:=X_g^\circ\cap (X^f)^\circ
\]
is the open affine Richardson variety; it is smooth and irreducible of dimension $d=\ell(g)-\ell(f)$. Although the ambient affine flag variety is infinite dimensional, the Schubert variety $X_g$ and the closed Richardson variety $\overline{\cR}_f^g=X_g\cap X^f$ are finite-dimensional projective varieties.

\subsection{Bounded affine permutations and patches}

An affine permutation $f$ is $(k,n)$-bounded if it has average $k$ and satisfies $i\leq f(i)\leq i+n$ for every $i$. Write $\Bkn$ for the set of such permutations. For $I\in\binom{[n]}k$, the translation $t_I$ defined in Section~\ref{sec:intro} belongs to $\mathbf B_{k,n}$, and $\ell(t_I)=k(n-k)$.

The open positroid variety indexed by $f\in\mathbf B_{k,n}$ can be described from either the source or the target Grassmann necklace of $f$; we use only that it is a smooth irreducible variety of dimension $k(n-k)-\ell(f)$ and that each Pl\"ucker coordinate $\Delta_I$ is a $T_n$-semi-invariant. Consequently every nonempty $\Pio_{f,I}$ is a dense $T_n$-stable affine open subset of $\Pio_f$. See \cite{knutson2013positroid,galashin2023positroid,thomas}. We also use the standard fact that $\Pio_{f,I}\neq\varnothing$ if and only if $I$ is a basis of the positroid of $f$, if and only if $f\leq t_I$ \cite{snider,knutson2013positroid,thomas}.

Suppose $f\in\mathbf B_{k,n}$ and $f\leq t_I$. Snider \cite{snider} identifies the corresponding patch with an open affine Richardson variety.

\begin{theorem}\label{thm:snider}
There is an isomorphism
\[
  \Ro_{f,t_I}\xrightarrow{\ \sim\ }\Pio_{f,I}.
\]
\end{theorem}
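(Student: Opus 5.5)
Snider's isomorphism \cite{snider} realizes the patch as an open affine Richardson variety through an explicit chart, and I would reconstruct it in three steps: identify the relevant affine Schubert cell with an affine space, compose with a lattice-to-Grassmannian map, and check that the opposite Schubert condition matches the positroid condition.

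\textbf{Step 1: the chart.} View the affine flag variety as the space of periodic flags of $\C[t^{\pm1}]$-lattices in $\C((t))^n$, or equivalently of periodic subspaces of $\C^{\mathbb Z}$. Since $t_I$ is a translation element, the cell $X_{t_I}^\circ$ has an explicit parametrization. It is an orbit of the Iwahori subgroup, and I expect its points to be the lattices $\Lambda$ with $t\Lambda_0\subseteq\Lambda\subseteq t^{-1}\Lambda_0$, suitably normalized, whose reduction lies in the big cell $\{\Delta_I\neq0\}$ of $\Gr(k,n)$. Concretely, a point is given by a $k\times n$ matrix whose $I$-columns form the identity. The remaining entries give exactly the affine space $\C^{k(n-k)}$, consistent with $\ell(t_I)=k(n-k)$.

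\textbf{Step 2: the map.} Define $\Ro_{f,t_I}\to\Gr(k,n)$ by sending a periodic flag to the $k$-plane $\Lambda/t\Lambda_0\subset t^{-1}\Lambda_0/t\Lambda_0\cong\C^{n}$, after identifying this quotient with $\C^n$ through the first relevant graded piece. On $X_{t_I}^\circ$ this map is an isomorphism onto the Schubert-type open set $\{\Delta_I\neq0\}$, by Step 1.

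\textbf{Step 3: matching the opposite condition.} I must show that a point lies in $(X^f)^\circ$ exactly when its image lies in $\Pio_f$. Relative position with respect to the opposite Iwahori is determined by the ranks of the intersections of $\Lambda$ with the opposite standard periodic flag. Under the chart, these ranks become the ranks of cyclic intervals of columns of the $k\times n$ matrix. By the Knutson--Lam--Speyer description \cite{knutson2013positroid}, $\Pio_f$ is cut out by prescribing exactly these cyclic-interval ranks in terms of $f$. Matching the two rank arrays then gives a bijection, and since both sides are smooth, a bijective morphism of smooth varieties over $\C$ is an isomorphism by Zariski's main theorem. Alternatively, the inverse is regular because it is given by the explicit chart.

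\textbf{Main obstacle.} The hardest step is Step 3. It requires checking that the affine Bruhat order conditions encoded in $(X^f)^\circ$ translate exactly into the bounded-affine-permutation rank conditions, including the boundary cases $f(i)=i$ and $f(i)=i+n$. I would first verify this on $T_n$-fixed points: the fixed points of $\Ro_{f,t_I}$ closures are the $g$ with $f\leq g\leq t_I$, and these should correspond to the bases $J$ with $f\leq t_J$. I would then argue that both sides are unions of the same $T$-stable strata, indexed compatibly by $[f,t_I]$.
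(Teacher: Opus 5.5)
The paper does not prove this statement; it quotes it from Snider~\cite{snider} (see also \cite{thomas}). Your outline has the right architecture: a chart $X^\circ_{t_I}\cong\{\Delta_I\neq0\}\cong\C^{k(n-k)}$, then matching the opposite-Schubert conditions with cyclic rank conditions. But the chart you propose in Steps~1--2 is not an isomorphism for general $I$.

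\textbf{The chart.} A map that remembers only one lattice $F_r$ of the periodic flag factors through the partial affine flag variety of the maximal parahoric with Weyl group $W_{\hat r}=\langle s_i:i\neq r\rangle$. On a cell $X^\circ_w$ this projection has fibers of dimension $\ell(w)-\ell(w')$, where $w'$ is the minimal element of $wW_{\hat r}$. So it is injective only when every right descent of $w$ is $s_r$.
\begin{itemize}
\item The right descents of $t_I$ are the $s_i$ with $i\in I$ and $i+1\notin I$ (indices mod $n$), one for each cyclic block of $I$.
\item For the paper's own patch $I=\{1,3\}$ in $\Gr(2,4)$, $t_{13}=[5,2,7,4]$ has right descents $s_1$ and $s_3$, as the reduced word $f\,s_2s_0s_3s_1$ shows.
\item Every single-lattice map therefore collapses this four-dimensional cell onto a set of dimension at most three.
\end{itemize}
Put differently, reducing one lattice modulo $t$ is equivariant for the Iwahori acting through a cyclically rotated Borel subgroup. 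It therefore sends $X^\circ_{t_I}$ to a Schubert cell, and $\{\Delta_I\neq0\}$ is the big cell of such a Borel only when $I$ is a cyclic interval. In addition, $t^{-1}\Lambda_0/t\Lambda_0$ is $2n$-dimensional, so the map in Step~2 is not even defined as written.

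The chart must use the whole periodic flag. For instance, the root subgroups of the $k(n-k)$ positive affine roots $\alpha$ with $t_I^{-1}\alpha<0$ commute pairwise, and their product acts simply transitively on $X^\circ_{t_I}$. This parallels the abelian unipotent group acting simply transitively on $\{\Delta_I\neq0\}$. Snider's construction likewise builds the entire periodic flag from the chart matrix.

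\textbf{The rank matching.} Step~3 is the actual content of Snider's theorem, and your proposal leaves it as an expectation. The cell $(X^f)^\circ$ is determined by the full array $\dim(F_a\cap E^-_b)$, $a,b\in\mathbb Z$, where $E^-_\bullet$ is the opposite standard flag. The intersections of a single lattice with $E^-_\bullet$ only locate a point in a partial affine flag variety. The lemma you need is that, for the correct chart, this array is an explicit shift of the cyclic rank function $(a,b)\mapsto\operatorname{rank}(v_a,\dots,v_b)$ of the periodically extended columns. Then $f$ is recovered exactly as in Knutson--Lam--Speyer.

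Your fixed-point fallback does not provide this.
\begin{itemize}
\item The chart relates $X^f\cap X^\circ_{t_I}$ and $\Pi_f\cap\{\Delta_I\neq0\}$, each with a single $T$-fixed point: $t_I$ and the coordinate plane of $I$, respectively.
\item The projective closure $X^f\cap X_{t_I}$, with fixed points $[f,t_I]$, has no counterpart on the patch.
\item $[f,t_I]$ is not in bijection with $\{J:f\leq t_J\}$. For the top cell of $\Gr(2,4)$ and $I=\{1,3\}$, these sets have $16$ and $6$ elements.
\item Even where index sets of stratifications agree, that does not show a particular map carries each stratum onto the corresponding one.
\end{itemize}
Once the correct chart and the rank lemma are established, Theorem~\ref{thm:snider} follows by restricting an isomorphism to matching locally closed subsets. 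No appeal to Zariski's main theorem is needed.
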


\subsection{Deodhar decompositions}

Let $f\leq g$ be arbitrary affine permutations. Choose a reduced word $\mathbf g=s_{i_1}\cdots s_{i_m}$ for $g$ and let $\DS(f,\mathbf g)$ be the finite set of distinguished subexpressions $A=(a_1,\ldots,a_m)$ of $\mathbf g$ with product $f$, using the chosen-letter notation above. For $A\in\DS(f,\mathbf g)$ let $e(A)=\#\{j:a_j=e\}$, let $b(A)$ count the positions with $\ell(a_{(j)})<\ell(a_{(j-1)})$, and let $p(A)$ count the positions with $a_j=s_{i_j}$ and $\ell(a_{(j)})>\ell(a_{(j-1)})$. Then $m=p(A)+e(A)+b(A)$ and $\ell(f)=p(A)-b(A)$, so
\begin{equation}\label{eq:dimension-balance}
  e(A)+2b(A)=d.
\end{equation}

Deodhar's decomposition, extended to Kac--Moody groups by Billig--Dyer, takes the following form \cite{Deodhar,billig1994decompositions,MR04}.

\begin{theorem}\label{thm:affine-deodhar}
There is a decomposition into locally closed subvarieties
\begin{equation}\label{eq:deodhar-decomposition}
  \Ro_{f,g}
  =\bigsqcup_{A\in\DS(f,\mathbf g)}
  D_A,\qquad D_A\cong(\C^*)^{e(A)}\times\C^{b(A)}.
\end{equation}
\end{theorem}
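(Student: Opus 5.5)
The plan is to reduce the affine statement to the Kac--Moody Deodhar decomposition of Billig--Dyer and Marsh--Rietsch, following the finite-type pattern recalled for braid Richardson varieties in \eqref{eq:braid-deodhar}. First factor off the common length-zero element $\omega$. Left translation by $\omega$ is an automorphism of the affine flag variety preserving Schubert and opposite Schubert cells up to relabeling. It identifies $\Ro_{f,g}$ with $\Ro_{\bar f,\bar g}$, so we may assume $f,g$ lie in the affine Coxeter group $\widetilde S_n$. Next fix the reduced word $\mathbf g=s_{i_1}\cdots s_{i_m}$. Since $\mathbf g$ is reduced, the Bott--Samelson gallery map $(F_0,\dots,F_m)\mapsto F_m$, with $F_0$ the base point and $F_{j-1}\xrightarrow{s_{i_j}}F_j$, is an isomorphism onto $X_g^\circ$. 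This identifies $\Ro_{f,g}$ with the set of galleries ending in $(X^f)^\circ$, exactly as in the braid model.

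For each gallery, record the relative position $a_{(j)}$ of $F_j$ with respect to the opposite base flag, that is, the opposite Schubert cell containing $F_j$. The step rule for opposite cells in a Kac--Moody flag variety shows two things. If $a_{(j-1)}s_{i_j}>a_{(j-1)}$, the $\mathbb P^1$ of flags $F_j$ adjacent to $F_{j-1}$ meets the cell of $a_{(j-1)}$ in a copy of $\C^*$ (after removing $F_{j-1}$) and meets the cell of $a_{(j-1)}s_{i_j}$ in one point. If $a_{(j-1)}s_{i_j}<a_{(j-1)}$, the line meets the cell of $a_{(j-1)}s_{i_j}$ in $\C$. The resulting sequence is a distinguished subexpression with product $f$. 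The locus $D_A$ of galleries with a given sequence $A$ is therefore an iterated bundle with fibers $\C^*$ at omitted positions, $\C$ at decreasing positions and a point at increasing selected positions.

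To trivialize these bundles, I would use the Marsh--Rietsch parametrization \cite[Proposition~5.2]{MR04}. It writes the gallery as $g_1\cdots g_m B_+$, where $g_j=y_{i_j}(t_j)$ with $t_j\neq0$ at omitted positions, $g_j=\dot s_{i_j}$ at increasing selected positions, and $g_j=x_{i_j}(c_j)\dot s_{i_j}^{-1}$ with $c_j\in\C$ at decreasing positions. This is done in the affine Kac--Moody group using real root subgroups. The proof consists of checking that this map is an isomorphism onto $D_A$. The computation is rank one at each step and uses only the relations in $\mathrm{SL}_2$ root subgroups and the fact that $\dot w B_-$-orbits behave as in finite type. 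That fact is exactly the content of Billig--Dyer's extension of Deodhar's argument. The result is $D_A\cong(\C^*)^{e(A)}\times\C^{b(A)}$, and the strata are locally closed because each is cut out by conditions on the relative positions of the $F_j$. These conditions are finitely many locally closed conditions on the finite-dimensional variety $X_g$.

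I expect the main obstacle to be working rigorously in the infinite-dimensional affine flag variety: making sense of opposite cells and their intersections with $\mathbb P^1$-lines. I would handle this by citing Billig--Dyer \cite{billig1994decompositions} for the Kac--Moody Deodhar decomposition, and by noting that all constructions take place inside the finite-dimensional $X_g$. Disjointness and exhaustiveness are then immediate, because every gallery determines a unique sequence of relative positions.
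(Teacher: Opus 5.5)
Your proposal is correct and takes the same approach as the paper, which does not reprove this statement but cites Deodhar, Billig--Dyer and Marsh--Rietsch \cite{Deodhar,billig1994decompositions,MR04}. Your gallery model, rank-one step rule for opposite cells, Marsh--Rietsch parametrization and reduction by the length-zero factor $\omega$ unpack exactly those references and the paper's convention of working with $\bar f\leq\bar g$. The one point where real work is deferred, that the parametrization is an isomorphism onto each stratum in the affine Kac--Moody setting, is taken from Billig--Dyer in both your argument and the paper.
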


Counting points over $\F_q$ gives
\begin{equation}\label{eq:R-deogram}
  R_{f,g}(q)
  =\sum_{A\in\DS(f,\mathbf g)}
    (q-1)^{e(A)}q^{b(A)}.
\end{equation}

There is a unique positive distinguished subexpression $A_+$ for $f$ in $\mathbf g$, with $b(A_+)=0$ and $e(A_+)=d$. Its stratum
\begin{equation}\label{eq:top-torus}
  D_+:=D_{A_+}\cong(\C^*)^d
\end{equation}
is the unique stratum of dimension $d$. Since $\Ro_{f,g}$ is irreducible, a locally closed stratum of full dimension is dense and therefore open. By \eqref{eq:dimension-balance} every other stratum has dimension $e(A)+b(A)=d-b(A)<d$, that is, codimension $b(A)$.

For the endpoint $g=t_I$ these subexpressions have a periodic diagrammatic model. Let $P_I$ be the up-right lattice path from $(0,0)$ to $(n-k,k)$ whose up-steps are indexed by $I$. A reading order on the boxes of a fundamental domain of the periodic strip determined by $P_I$ produces a reduced word for $t_I$ of length $k(n-k)$, and the $(f,P_I)$-affine Deograms are diagrammatic encodings of the distinguished subexpressions for $f$ in that word. We write $\Deo_{f,P_I}$ for their finite set; see \cite{thomas} for the definitions and for their independence of the chosen reading order. Under Snider's isomorphism \eqref{eq:deodhar-decomposition} becomes
\[
  \Pio_{f,I}
  =\bigsqcup_{A\in\Deo_{f,P_I}}
  (\C^*)^{e(A)}\times\C^{b(A)},
\]
and \eqref{eq:R-deogram} becomes the affine-Deogram formula for $R_{f,t_I}(q)$.

\section{Crown-free intervals and \texorpdfstring{$R$}{R}-polynomials}
\label{sec:crowns}

It remains to determine when an $R$-polynomial is a pure power of $q-1$. Brenti's crown criterion and Lemma~\ref{lem:codim-one} relate this condition to the Bruhat interval.

A \emph{$2$-crown} is a rank-three Bruhat interval with exactly two elements in each of its two intermediate ranks. Such an interval is isomorphic to $[e,s_1s_2s_1]$ in $S_3$. To see this, write the interval as $[a,b]$, with elements $c_1,c_2$ in the lower intermediate rank and $d_1,d_2$ in the upper one. Each interval $[a,d_i]$ has rank two and therefore exactly two intermediate elements. Hence both $c_1$ and $c_2$ lie below both $d_1$ and $d_2$.

An interval \emph{contains a $2$-crown} if one of its subintervals is isomorphic to this poset. In the terminology of \cite[Section~2.7]{bjorner2005combinatorics}, a $k$-crown is the face poset of a $k$-gon. Among crowns, only the $2$-crown fails to be a lattice.

Recall Deodhar's subword formula: for any Coxeter system, any $x\leq y$ and any reduced word $\mathbf y$ for $y$,
\begin{equation}\label{eq:deodhar-R}
  R_{x,y}(q)=\sum_{A\in\DS(x,\mathbf y)}(q-1)^{e(A)}q^{b(A)},
  \qquad e(A)+2b(A)=d:=\ell(y)-\ell(x),
\end{equation}
see \cite{Deodhar} and \cite[Chapter~5]{bjorner2005combinatorics}. The sum in \eqref{eq:deodhar-R} uses the same distinguished subexpressions as \eqref{eq:deodhar-decomposition}, and Lemma~\ref{lem:codim-one} applies without change.

\begin{proposition}\label{prop:crown-R}
Let $x\leq y$ in a Coxeter system, with $d=\ell(y)-\ell(x)\geq2$. The following are equivalent:
\begin{enumerate}[label=\textup{(\roman*)}]
\item $[x,y]$ contains no $2$-crown;
\item $[q^{d-1}]R_{x,y}(q)=-d$;
\item $R_{x,y}(q)=(q-1)^d$;
\item $\DS(x,\mathbf y)=\{A_+\}$ for some, equivalently every, reduced word $\mathbf y$ for $y$.
\end{enumerate}
\end{proposition}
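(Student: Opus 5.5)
The plan is to prove (iii)$\Leftrightarrow$(iv), then (ii)$\Leftrightarrow$(iv), and finally (i)$\Leftrightarrow$(ii), the last being where the interval itself enters.

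\emph{(iii)$\Leftrightarrow$(iv).} Fix a reduced word $\mathbf y$. Substituting $q=1+z$ in \eqref{eq:deodhar-R} gives $R_{x,y}(1+z)=z^d+\sum_{A\neq A_+}z^{e(A)}(1+z)^{b(A)}$, exactly as in the proof of Lemma~\ref{lem:braid-intrinsic}. Every extra summand is a nonzero polynomial with nonnegative coefficients. Hence $R_{x,y}=(q-1)^d$ if and only if $\DS(x,\mathbf y)=\{A_+\}$. Since the left side does not depend on $\mathbf y$, this also settles ``some, equivalently every''.

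\emph{(ii)$\Leftrightarrow$(iv).} Expand as in Corollary~\ref{cor:coefficient}. Let $m_1$ be the number of $A$ with $b(A)=1$. Each such $A$ has $e(A)=d-2$ and contributes $q(q-1)^{d-2}$, whose leading term is $q^{d-1}$. Strata with $b(A)\ge2$ have degree $d-b(A)\le d-2$. Therefore $[q^{d-1}]R_{x,y}=-d+m_1$, and (ii) is equivalent to $m_1=0$. By Lemma~\ref{lem:codim-one}, $m_1=0$ is equivalent to $\DS(x,\mathbf y)=\{A_+\}$. This step uses no geometry, so it holds in any Coxeter system.

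\emph{(i)$\Leftrightarrow$(ii).} I would use the standard combinatorial invariance of the subleading coefficient. In any Coxeter group, $R_{x,y}=\sum_k$ (coefficients of $\widetilde R$), and the $\widetilde R$-polynomial has the form $\widetilde R_{x,y}(t)=t^d+a\,t^{d-2}+\cdots$. Here $a$ counts the elements of $[x,y]$ at rank $1$ minus those at rank $2$, or more precisely it is determined by the number of rank-two... To be safe I would instead prove directly that $[q^{d-1}]R_{x,y}=-d+m_1$ detects crowns, as follows.

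Step A: if $[x,y]$ contains a $2$-crown $[a,b]$, then $R_{a,b}=(q-1)^3+q(q-1)$. This follows from Dyer's result that $R$-polynomials of rank-three intervals are determined by the poset, or from a direct computation with $[e,s_1s_2s_1]$. Hence $\DS(a,\mathbf b)\neq\{A_+\}$. I would then transport this to $[x,y]$ using the multiplicativity-type inequality for $\widetilde R$: its coefficients are nonnegative and counted by paths in the Bruhat graph with a reflection order (Dyer). A non-minimal path on $[a,b]$ extends, via paths from $x$ to $a$ and from $b$ to $y$, to a path of length less than $d$ on $[x,y]$. This forces $\widetilde R_{x,y}\neq t^d$, hence (iii) fails.

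Step B: conversely, if (iii) fails, Lemma~\ref{lem:codim-one} supplies an $A$ with $b(A)=1$. Let $j$ be its decreasing position. Around $j$, the prefix products $a_{(j-1)}>a_{(j)}$, together with the PDS prefix, should exhibit two elements $c_1\neq c_2$ and $d_1\neq d_2$ in a rank-three subinterval. I would try to take $[a_{(j)}\cdot(\text{suffix}),\,a_{(j-1)}s\cdot(\text{suffix})]$ built from the two subwords $A$ and $A_+$ differing there.

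I expect Step B to be the main obstacle: extracting an explicit $2$-crown from a codimension-one stratum. If it resists, the fallback is Dyer's theorem that $\widetilde R_{x,y}=t^d$ exactly when every Bruhat-graph path from $x$ to $y$ has length $d$, combined with the fact that a shortest non-cover edge yields a rank-three crown (Brenti's criterion).
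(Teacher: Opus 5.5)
Your arguments for (ii)$\Leftrightarrow$(iii)$\Leftrightarrow$(iv) are correct and agree with the paper's. Grouping the terms of \eqref{eq:deodhar-R} by $b(A)$ gives $[q^{d-1}]R_{x,y}=-d+m_1$. Lemma~\ref{lem:codim-one} converts $m_1=0$ into $\DS(x,\mathbf y)=\{A_+\}$, and positivity after $q=1+z$ gives the rest, including word-independence. The gap is (i)$\Leftrightarrow$(ii), where none of your proposed steps works.

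\emph{Step~A.} Dyer's formula $\widetilde R_{x,y}(t)=\sum_\Delta t^{\ell(\Delta)}$ runs only over Bruhat-graph paths whose labels increase in a fixed reflection order. Splicing the crown edge $a\to b$ between chains $x\to a$ and $b\to y$ gives a path of length $d-2$, but nothing makes it increasing, so it need not contribute to $\widetilde R_{x,y}$. What you really need is that the condition $\widetilde R=t^{\ell}$ passes from $[x,y]$ to its subintervals. Neither the path formula nor the Deodhar sum gives this; for instance, a reduced word for $b$ is in general not a prefix of $\mathbf y$. That heredity is a consequence of the crown criterion, not an input to it.

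\emph{Step~B.} It has no mechanism. The prefix products of $A'$ and $A_+$ are in general not elements of $[x,y]$; those of $A_+$ all lie below $x$. Right multiplication by the remaining letters preserves neither Bruhat order nor rank. Read literally, your two proposed endpoints even coincide, since $a_{(j)}=a_{(j-1)}s_{i_j}$ at the decreasing position.

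Your first guess is also false. The subleading coefficient of $\widetilde R_{x,y}$ is $m_1$. For $[e,s_1s_2s_1s_2]$ in a dihedral group of order at least $8$, one gets $\widetilde R=t^4+2t^2$, even though ranks $1$ and $2$ both have two elements.

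The fallback does not close the gap either. The implication ``every Bruhat-graph path from $x$ to $y$ has length $d$ $\Rightarrow$ $\widetilde R_{x,y}=t^d$'' does follow from the path formula and the uniqueness of the increasing maximal chain. Its converse, which (iii)$\Rightarrow$(i) needs, fails to follow for the same reason as Step~A. The claim that a shortest non-cover edge spans a $2$-crown also needs a reduction from edges of odd length at least $5$ to length $3$, which you do not supply. These statements are the content of Brenti's theorem, not consequences of Dyer's.

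The paper does not derive (i)$\Leftrightarrow$(ii) from Deodhar combinatorics at all. It cites Brenti's crown criterion \cite[Theorem~6.3]{brenti1994combinatorial}: $[x,y]$ is $2$-crown-free if and only if $[q]R_{x,y}(q)=(-1)^{d-1}d$. It then transfers this to (ii) by the reciprocity $R_{x,y}(q)=(-q)^dR_{x,y}(q^{-1})$, which gives $[q^{d-1}]R_{x,y}=(-1)^d[q]R_{x,y}$. Replacing Steps~A and~B by this citation completes your proof; Lemma~\ref{lem:codim-one} is needed only to connect (ii) with (iv).
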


\begin{proof}
Brenti's crown criterion \cite[Theorem~6.3]{brenti1994combinatorial} identifies \textup{(i)} with the coefficient condition $[q]R_{x,y}(q)=(-1)^{d-1}d$; see also \cite[Chapter~5, Exercise~36]{bjorner2005combinatorics}. The reciprocity identity
\[
 R_{x,y}(q)=(-q)^d R_{x,y}(q^{-1})
\]
converts this to \textup{(ii)}. The equivalence of \textup{(i)} and \textup{(iii)} is also part of the cited criterion; the following argument explains its relation to codimension-one strata. Group the terms of \eqref{eq:deodhar-R} by $b(A)$. Writing $m$ for the number of $A$ with $b(A)=1$, exactly as in Corollary \ref{cor:coefficient} we get
\[
 R_{x,y}(q)=(q-1)^d+m\,q(q-1)^{d-2}+\bigl(\text{degree}\leq d-2\bigr),
 \qquad [q^{d-1}]R_{x,y}(q)=-d+m .
\]
Thus \textup{(ii)} says $m=0$, which by Lemma \ref{lem:codim-one} forces $\DS(x,\mathbf y)=\{A_+\}$, that is \textup{(iv)}; and \textup{(iv)} gives \textup{(iii)} by \eqref{eq:deodhar-R}. Finally \textup{(iii)} gives \textup{(ii)} by inspection.
\end{proof}

\begin{remark}\label{rem:lattice}
A lattice interval cannot contain a $2$-crown, since every interval in a lattice is a lattice and a $2$-crown is not. The converse for arbitrary Bruhat intervals is attributed to an unpublished result of Dyer in \cite[Section~1]{unexpected}; that paper also proves the equivalence in finite type $A$. We retain the finite lattice equivalence in Theorem \ref{thm:GKSB}, but do not use the general converse or include it among the affine theorem's hypotheses and conclusions.
\end{remark}

\section{Intrinsic tori in affine type A}\label{sec:affine-proof}

We combine the affine Deodhar decomposition with Proposition~\ref{prop:crown-R} to prove Theorem~\ref{thm:main}. Snider's isomorphism then gives the corresponding criterion for positroid patches.

\begin{proof}[Proof of Theorem \ref{thm:main}]
Suppose first that $\Ro_{f,g}$ is an algebraic torus. Its open dense Deodhar stratum $D_+$ from \eqref{eq:top-torus} is then a torus of the same dimension, so Lemma \ref{lem:open-torus} gives $D_+=\Ro_{f,g}$. Hence the Deodhar decomposition has one stratum, the positive distinguished subexpression is the only distinguished subexpression, and $R_{f,g}(q)=(q-1)^d$ by \eqref{eq:R-deogram}.

Conversely, suppose $R_{f,g}(q)=(q-1)^d$ and substitute $q=1+z$ in \eqref{eq:R-deogram} to obtain
\begin{equation}\label{eq:positive-u}
  z^d=z^d+
  \sum_{A\neq A_+}z^{e(A)}(1+z)^{b(A)} .
\end{equation}
Every summand on the right is a nonzero polynomial with nonnegative integer coefficients, so there is no distinguished subexpression other than $A_+$ and $\Ro_{f,g}=D_+\cong(\C^*)^d$.

This proves the equivalence of \textup{(i)}, \textup{(iii)}, \textup{(iv)} and \textup{(v)}. Proposition \ref{prop:crown-R} gives the equivalence with \textup{(ii)} when $d\geq2$; for $d\leq1$ every interval is crown-free and $\Ro_{f,g}$ is a point or $\C^*$, so all five conditions hold. Independence of the chosen reduced word is automatic, since conditions \textup{(i)}--\textup{(iii)} do not refer to a word.
\end{proof}

\begin{proof}[Proof of Corollary \ref{cor:patch-main}]
Snider's isomorphism (Theorem \ref{thm:snider}) identifies $\Pio_{f,I}$ with $\Ro_{f,t_I}$. Apply Theorem \ref{thm:main} with $g=t_I$ and use the bijection between distinguished subexpressions of the $P_I$-word and $(f,P_I)$-affine Deograms.
\end{proof}

The same argument gives an affine version of Corollary~\ref{cor:coefficient}. Let $m_{f,g}$ count the Deodhar strata of codimension one in $\Ro_{f,g}$. For $d\geq2$, we obtain
\[
 R_{f,g}(q)
 =(q-1)^d+m_{f,g}\,q(q-1)^{d-2}
   +\bigl(\text{terms of degree at most $d-2$}\bigr),
\]
and $\Ro_{f,g}$ is a torus if and only if $m_{f,g}=0$, if and only if $[q^{d-1}]R_{f,g}(q)=-d$.

\section{Standard toricity, patches, and positroid links}
\label{sec:standard}

The affine criterion tells us which individual patches are intrinsic tori. We now combine it with the standard action. Since the existence of a dense standard orbit is the same on every nonempty patch of a fixed positroid variety, the two criteria distinguish standard torus patches from unexpected ones.

\begin{proposition}
\label{prop:standard-patch}
For a fixed $f\in\mathbf B_{k,n}$ the following are equivalent:
\begin{enumerate}[label=\textup{(\roman*)}]
  \item $\Pio_f$ has a dense $T_n$-orbit;
  \item some nonempty patch $\Pio_{f,I}$ has a dense $T_n$-orbit;
  \item every nonempty patch $\Pio_{f,I}$ has a dense $T_n$-orbit.
\end{enumerate}
\end{proposition}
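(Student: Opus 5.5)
The argument reduces everything to Lemma~\ref{lem:stable-open}, applied to the irreducible variety $X=\Pio_f$ with the $T_n$-stable open subsets $U=\Pio_{f,I}$. The only inputs are the facts recalled in Section~\ref{sec:affine-patches}. First, $\Pio_f$ is irreducible, being smooth and irreducible of dimension $k(n-k)-\ell(f)$. Second, each Pl\"ucker coordinate $\Delta_I$ is a $T_n$-semi-invariant. Hence its nonvanishing locus is $T_n$-stable, and every nonempty $\Pio_{f,I}=\Pio_f\cap\{\Delta_I\neq0\}$ is a nonempty $T_n$-stable open subset of $\Pio_f$.

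For (i)$\Rightarrow$(iii), let $I$ be such that the patch is nonempty. By the previous paragraph and Lemma~\ref{lem:stable-open}, a dense orbit of $\Pio_f$ lies in, and is dense in, $\Pio_{f,I}$.

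(iii)$\Rightarrow$(ii) needs at least one nonempty patch. The Pl\"ucker coordinates have no common zero on $\Gr(k,n)$, so some $\Delta_I$ is nonzero at a point of the nonempty variety $\Pio_f$. Equivalently, the positroid of $f$ has a basis $I$, and then $f\le t_I$.

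For (ii)$\Rightarrow$(i), apply Lemma~\ref{lem:stable-open} in the other direction. A dense orbit in the nonempty stable open patch $\Pio_{f,I}$ is dense in $\Pio_f$.

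I do not expect a real obstacle. The only points to state carefully are semi-invariance, which gives stability of the patch, and nonemptiness of some patch. If desired, one can add a remark tying this to Theorem~\ref{thm:intro-positroid-link}: by Theorem~\ref{thm:braid-standard}, a dense orbit on $\Pio_f$ is automatically the whole variety. This remark is not needed for the equivalence itself.
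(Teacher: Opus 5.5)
Your proposal is correct and follows the paper's proof: both apply Lemma~\ref{lem:stable-open} to $X=\Pio_f$ and $U=\Pio_{f,I}$, using that $\Delta_I$ is a $T_n$-semi-invariant, so each nonempty patch is a $T_n$-stable open subset. Your explicit check that some patch is nonempty, because the Pl\"ucker coordinates have no common zero, spells out what the paper tacitly uses when it calls (iii)$\Rightarrow$(ii) immediate.
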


\begin{proof}
Each nonempty patch $\Pio_{f,I}$ is a nonempty $T_n$-stable open subset
of the irreducible variety $\Pio_f$, since $\Delta_I$ is a
semi-invariant. Lemma \ref{lem:stable-open} applied to $X=\Pio_f$ and
$U=\Pio_{f,I}$ therefore gives
\textup{(i)}$\Leftrightarrow$\textup{(iii)}, while
\textup{(iii)}$\Rightarrow$\textup{(ii)} is immediate and the same lemma
gives \textup{(ii)}$\Rightarrow$\textup{(i)}.
\end{proof}

The generic $T_n$-orbit dimension has a matroidal description. If $M$ is
the matroid of a point of the Grassmannian, its matroid polytope has
dimension $n-\kappa(M)$, where $\kappa(M)$ is the number of connected
components, and this is also the dimension of the diagonal torus orbit
\cite{gelfand1987combinatorial}. Since the generic matroid on $\Pio_f$ is the positroid
$\cM(f)$,
\begin{equation}\label{eq:standard-rank}
  r_f:=\dim\bigl(T_n\cdot x_{\mathrm{gen}}\bigr)
  =n-\kappa\bigl(\cM(f)\bigr).
\end{equation}

\begin{corollary}\label{cor:standard-numerical}
The varieties in Proposition \ref{prop:standard-patch} are standard toric if
and only if
\[
  k(n-k)-\ell(f)=n-\kappa\bigl(\cM(f)\bigr).
\]
\end{corollary}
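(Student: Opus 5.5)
The plan is to compare two numbers on $\Pio_f$: its dimension and the dimension of a generic $T_n$-orbit. Standard toricity means these are equal. Recall that $\Pio_f$ is irreducible of dimension $k(n-k)-\ell(f)$. By Proposition~\ref{prop:standard-patch}, the three varieties listed there are simultaneously standard toric or not. So it suffices to treat $X=\Pio_f$ itself.

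First I reduce the dense-orbit property to a dimension count. Because $T_n$ is connected and $X$ is irreducible, every orbit is irreducible and locally closed. A dense orbit therefore exists exactly when some orbit has dimension $\dim X$. Orbit dimension is lower semicontinuous, so it is maximized on a dense open set. Hence $X$ has a dense orbit if and only if the generic orbit dimension $r_f$ equals $\dim X$. Here I use that an orbit of full dimension is constructible and dense, and that a dense orbit has dimension $\dim X$.

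Next I substitute the matroidal value \eqref{eq:standard-rank}, $r_f=n-\kappa(\cM(f))$, and the dimension $k(n-k)-\ell(f)$. This gives the displayed equality as the criterion.

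The only step needing care is justifying \eqref{eq:standard-rank}: that a general point of $\Pio_f$ has matroid $\cM(f)$. I would take this from the definition of the open positroid variety, since every point of $\Pio_f$ has positroid exactly $\cM(f)$. I would also use the Gelfand--Goresky--MacPherson--Serganova fact that the orbit of a point has dimension equal to that of its matroid polytope, namely $n$ minus the number of connected components of the matroid. As a consistency check, the criterion can be compared with Corollary~\ref{cor:finite-slice}\textup{(iv)}, via Proposition~\ref{prop:plabic-forest}, which gives $r=n-c(G)$.
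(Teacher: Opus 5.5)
Your argument is correct and is the paper's proof. You reduce to $\Pio_f$ via Proposition~\ref{prop:standard-patch}, note that a dense orbit exists exactly when the generic orbit dimension equals $\dim\Pio_f=k(n-k)-\ell(f)$, and substitute \eqref{eq:standard-rank}; your semicontinuity remarks just spell out the clause the paper states in one line.

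One small precision in your side justification: not every point of $\Pio_f$ has matroid $\cM(f)$, only positroid envelope $\cM(f)$. A general point does have matroid $\cM(f)$, because nonvanishing of the $\Delta_I$ with $I\in\cM(f)$ is a nonempty open condition, and that is all \eqref{eq:standard-rank} needs.
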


\begin{proof}
$\Pio_f$ is irreducible of dimension $k(n-k)-\ell(f)$, and a torus action
has a dense orbit exactly when the generic orbit has that dimension.
Substitute \eqref{eq:standard-rank} and apply Proposition
\ref{prop:standard-patch}.
\end{proof}

\begin{theorem}\label{thm:standard-patch-link}
Let $f\in\mathbf B_{k,n}$. The following are equivalent:
\begin{enumerate}[label=\textup{(\roman*)}]
 \item the closed positroid variety $\Pi_f$ is standard toric;
 \item $\Pio_f$ is a single $T_n$-orbit;
 \item some nonempty patch $\Pio_{f,I}$ is standard toric;
 \item every nonempty patch $\Pio_{f,I}$ is standard toric;
 \item some, equivalently every, reduced plabic graph for $f$ is a forest;
 \item the positroid link $\Lambda_f$ is smoothly slice;
 \item the positroid link $\Lambda_f$ is an unlink.
\end{enumerate}
When these conditions hold, every nonempty patch equals $\Pio_f$.
\end{theorem}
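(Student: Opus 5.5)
The plan is to treat Theorem~\ref{thm:standard-patch-link} as an assembly of results already proved, organized around the hub condition \textup{(ii)}. First, Theorem~\ref{thm:intro-positroid-link} gives the equivalences \textup{(ii)}$\Leftrightarrow$\textup{(vi)}$\Leftrightarrow$\textup{(vii)}. Next, \textup{(ii)} implies that $\Pio_f$ has a dense orbit. Since $\Pio_f$ is a dense $T_n$-stable open subset of the irreducible variety $\Pi_f$, Lemma~\ref{lem:stable-open} shows that $\Pi_f$ has a dense orbit exactly when $\Pio_f$ does. That gives \textup{(i)} from \textup{(ii)}. Conversely, a dense orbit on $\Pio_f$ yields \textup{(ii)} through the equivariant Richardson--positroid isomorphism together with Theorem~\ref{thm:braid-standard}, since (i)$\Rightarrow$(ii) there. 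Thus \textup{(i)}$\Leftrightarrow$\textup{(ii)}.

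For the patches, ``standard toric'' means having a dense $T_n$-orbit. Proposition~\ref{prop:standard-patch} identifies a dense orbit on $\Pio_f$ with a dense orbit on some nonempty patch, and also with a dense orbit on every nonempty patch. Combined with the previous paragraph, this gives \textup{(ii)}$\Leftrightarrow$\textup{(iii)}$\Leftrightarrow$\textup{(iv)}.

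For \textup{(v)}, Proposition~\ref{prop:plabic-forest} gives $d-r=b_1(G)$ for any reduced plabic graph $G$ of $f$, where $r$ is the generic orbit dimension. A dense orbit on $\Pio_f$ means $r=d$, which holds if and only if $b_1(G)=0$, i.e.\ $G$ is a forest. The left side $d-r$ does not depend on $G$, so the forest property holds for some reduced plabic graph exactly when it holds for all of them.

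For the final assertion, assume the conditions hold, so by \textup{(ii)} $\Pio_f$ is a single orbit. A nonempty patch is $T_n$-stable because $\Delta_I$ is a semi-invariant. Being nonempty, it contains a point of the unique orbit and hence the whole orbit, so it equals $\Pio_f$.

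The only real subtlety is consistency of terminology. The dense-orbit notion on the closed variety $\Pi_f$ must match the one used in Theorem~\ref{thm:braid-standard}, and the isomorphism $\Rfin_{v,w}\cong\Pio_f$ must be $T_n$-equivariant. Both points are recorded earlier in the paper, so I do not expect a genuine obstacle.
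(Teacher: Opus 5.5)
Your proposal is correct and follows essentially the same assembly as the paper. It uses the equivariant Richardson--positroid isomorphism with Theorem~\ref{thm:braid-standard} and Lemma~\ref{lem:stable-open} for \textup{(i)}$\Leftrightarrow$\textup{(ii)}, Proposition~\ref{prop:standard-patch} for the patch conditions, Proposition~\ref{prop:plabic-forest} for the forest condition, and Theorem~\ref{thm:intro-positroid-link} for the link conditions, and your argument for the final assertion (a nonempty $T_n$-stable patch must contain the unique orbit) is the one the paper gives in the proof of Theorem~\ref{thm:intro-positroid-link}.
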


\begin{proof}
The equivariant Richardson--positroid isomorphism and Theorem \ref{thm:braid-standard} show that a dense standard orbit on $\Pio_f$ is the whole variety. Lemma \ref{lem:stable-open} therefore gives \textup{(i)}$\Leftrightarrow$\textup{(ii)}, and Proposition \ref{prop:standard-patch} identifies these with \textup{(iii)} and \textup{(iv)}. Proposition \ref{prop:plabic-forest} gives \textup{(i)}$\Leftrightarrow$\textup{(v)}, while Theorem \ref{thm:intro-positroid-link} gives the equivalence of \textup{(ii)}, \textup{(vi)}, and \textup{(vii)}.
\end{proof}

The forest test concerns standard toricity, not the existence of an intrinsic torus patch. More precisely, Proposition \ref{prop:plabic-forest} and Corollary \ref{cor:patch-main} show that a nonempty patch is an \emph{unexpected} torus exactly when $[f,t_I]$ is crown-free and $b_1(G)>0$. Thus cycles in the global plabic graph obstruct a dense standard orbit on every patch, but need not obstruct an intrinsic torus structure on a particular patch.

\begin{corollary}\label{cor:unexpected}
Let $f\leq t_I$ and put $d=k(n-k)-\ell(f)$ and $r_f=n-\kappa(\cM(f))$; then $r_f \leq d$.
\begin{enumerate}[label=\textup{(\alph*)}]
  \item The patch $\Pio_{f,I}$ is standard toric if and only if $d=r_f$;
        in that case it is automatically an algebraic torus, it equals
        $\Pio_f$, and $[f,t_I]$ is crown-free.
  \item The patch $\Pio_{f,I}$ is an unexpected algebraic torus if and
        only if $[f,t_I]$ is crown-free and $d>r_f$.
\end{enumerate}
\end{corollary}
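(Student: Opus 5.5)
The plan is to combine the inequality $r_f\leq d$ with the patch results already in hand: Proposition~\ref{prop:standard-patch}, Theorem~\ref{thm:standard-patch-link}, and Corollary~\ref{cor:patch-main}.

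\emph{The inequality $r_f\leq d$.} By \eqref{eq:standard-rank}, $r_f$ is the dimension of a generic $T_n$-orbit on $\Pio_f$. Every such orbit lies in the irreducible variety $\Pio_f$, which has dimension $k(n-k)-\ell(f)=d$. Hence $r_f\leq d$. Alternatively, Proposition~\ref{prop:plabic-forest} gives $d-r_f=b_1(G)\geq0$.

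\emph{Part (a).} The nonempty patch $\Pio_{f,I}$ is a dense $T_n$-stable open subset of $\Pio_f$, so it is irreducible of dimension $d$. It is therefore standard toric exactly when its generic orbit dimension equals $d$. By Proposition~\ref{prop:standard-patch} this is the same as standard toricity of $\Pio_f$, which Corollary~\ref{cor:standard-numerical} identifies with $d=r_f$.

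Now assume $d=r_f$. Theorem~\ref{thm:standard-patch-link} shows that $\Pio_f$ is a single $T_n$-orbit and that every nonempty patch equals $\Pio_f$. So $\Pio_{f,I}=\Pio_f$ is a homogeneous space $T_n/K$, where $K$ is the stabilizer. Since the $T_n$-action is algebraic, $K$ is a closed subgroup, so the quotient $T_n/K$ is an algebraic torus of dimension $d$; concretely, it is isomorphic to $(\C^*)^d$ by the computation following Proposition~\ref{prop:braid-single-orbit}. The patch is thus isomorphic to $(\C^*)^{d}$, and Corollary~\ref{cor:patch-main} (i)$\Rightarrow$(ii) gives that $[f,t_I]$ is crown-free.

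\emph{Part (b).} By definition, the patch is an unexpected torus when it is isomorphic to an algebraic torus but is not standard toric. By Corollary~\ref{cor:patch-main}, being an algebraic torus is equivalent to $[f,t_I]$ being crown-free. By part~(a), failing to be standard toric is equivalent to $d\neq r_f$, which by the first step means $d>r_f$. Combining these gives (b).

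\emph{Main obstacle.} The only delicate point is the identification of the single orbit with $(\C^*)^d$ in part~(a). I would handle it through the equivariant Richardson--positroid isomorphism together with the forest case of Proposition~\ref{prop:braid-single-orbit}, which gives $\Ro_{v,\mathbf w}=D_+\cong(\C^*)^d$ directly.
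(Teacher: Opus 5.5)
Your proposal is correct and follows essentially the same route as the paper. You get $r_f\leq d$ because $r_f$ is an orbit dimension in a $d$-dimensional variety. Part (a) comes from Corollary~\ref{cor:standard-numerical} and Theorem~\ref{thm:standard-patch-link}, with $\Pio_{f,I}=\Pio_f\cong T_n/K\cong(\C^*)^d$ as in the forest case after Proposition~\ref{prop:braid-single-orbit}, followed by Corollary~\ref{cor:patch-main} for crown-freeness; part (b) then combines Corollary~\ref{cor:patch-main} with (a).
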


\begin{proof}
The inequality $r_f\leq d$ is automatic, since $r_f$ is the dimension of an orbit in a $d$-dimensional variety. Part (a) is Corollary \ref{cor:standard-numerical} together with Theorem \ref{thm:standard-patch-link}, whose last assertion gives $\Pio_{f,I}=\Pio_f\cong(\C^*)^d$; crown-freeness then follows from Corollary \ref{cor:patch-main}. For (b), Corollary \ref{cor:patch-main} says that the patch is an algebraic torus exactly when $[f,t_I]$ is crown-free, and by (a) it fails to be standard toric exactly when $d\neq r_f$, that is $d>r_f$.
\end{proof}

\begin{remark}
If $\cM(f)$ is connected then $r_f=n-1$, and the condition $d>r_f$ becomes the dimension threshold used for unexpected Richardson varieties in \cite{unexpected}.
\end{remark}

\section{Examples}\label{sec:example}

We illustrate how intrinsic torus structure, standard torus orbits, and link topology interact. The first example shows that intrinsic toricity can vary between patches of a fixed positroid variety. The second exhibits an intrinsic Richardson torus with a nonslice link. We then compute the slice genus of the Perko knot from its generic orbit codimension and finish with a nontrivial slice Richardson knot, showing why the unlink characterization requires the positroid hypothesis.

\subsection{The smallest unexpected patch}\label{subsec:unexpected-patch}
We work in $\Gr(2,4)$ with Pl\"ucker coordinates $p_{12},p_{13},p_{14},p_{23},p_{24},p_{34}$ and the relation
\begin{equation}\label{eq:plucker-24}
  p_{12}p_{34}-p_{13}p_{24}+p_{14}p_{23}=0.
\end{equation}
Let
\[
  f=f_{2,4}=[3,4,5,6],\qquad f(i)=i+2,
\]
which has length $0$. The corresponding open positroid variety is the top-dimensional one,
\begin{equation}\label{eq:top-open-24}
  X:=\Pio_f
  =\{p_{12}p_{23}p_{34}p_{14}\neq0\}\subset\Gr(2,4),
\end{equation}
of dimension four, while the generic diagonal torus orbit has dimension three because the uniform matroid $U_{2,4}$ is connected. This is the first possible ambient size for an unexpected patch: for $k\in\{1,n-1\}$ every open positroid variety is a single standard-torus orbit, and for $n\leq3$ these are the only nontrivial ranks.

The usual reduced plabic graph is a square of four alternating black and white internal vertices, each joined to one boundary vertex, as pictured in Figure~\ref{fig:plabicHopf}. It has $8$ edges, $8$ vertices, and one component, so $b_1(G)=1=4-3$, in agreement with \eqref{eq:plabic-complexity}.

\begin{figure}
    \centering
    \begin{tikzpicture}[vertex/.style={circle,draw,fill=white,inner sep=0pt,minimum size=15pt,font=\small},overcross/.style={preaction={draw, white, line width=5pt, -}},link/.style={orange!90!black, very thick, -{Stealth[length=2.5mm]}},or/.style={orange!90!black, very thick},scale=0.8, line width=1pt]

    \filldraw[fill=white] (0,0) circle (2.5);

    \draw (1,1) -- (1,-1);
    \draw (1,1) -- (-1,1);
    \draw (-1,-1) -- (1,-1);
    \draw (-1,-1) -- (-1,1);
    
    \draw (-1,-1) -- (-1.75,-1.75);
    \draw (-1,1) -- (-1.75,1.75);
    \draw (1,-1) -- (1.75,-1.75);
    \draw (1,1) -- (1.75,1.75);
    
    \filldraw[fill=black] (1, 1.) circle (0.13);
    \filldraw[fill=white] (1, -1.) circle (0.13);
    \filldraw[fill=white] (-1, 1.) circle (0.13);
    \filldraw[fill=black] (-1, -1.) circle (0.13);

    \draw[link, overcross] (-0.5,1.5) to (0.5,0.5);
    \draw[link, overcross] (0.5,1.5) to (-0.5,0.5);

    \draw[link, overcross] (0.5,-1.5) to (-0.5,-0.5);
    \draw[link, overcross] (-0.5,-1.5) to (0.5,-0.5);
    
    \draw[link, overcross] (0.5,0.5) to (1.5,-0.5);
    \draw[link, overcross] (0.5,-0.5) to (1.5,0.5);

    \draw[link, overcross] (-0.5,0.5) to (-1.5,-0.5);
    \draw[link, overcross] (-0.5,-0.5) to (-1.5, 0.5);
    
    \draw[link, overcross] (1.5,0.5) [out=45,in=45] to (0.5,1.5);

    \draw[link, overcross] (-1.5,0.5) [out=135,in=135] to (-0.5,1.5);

    \draw[link, overcross] (-1.5,-0.5) [out=225,in=225] to (-0.5,-1.5);

    \draw[link, overcross] (1.5,-0.5) [out=315,in=315] to (0.5,-1.5);

    \end{tikzpicture}
    \caption{The plabic graph and plabic link for the top cell of $\Gr(2,4)$.}
    \label{fig:plabicHopf}
\end{figure}

The positroid link of $f_{2,4}$ is the positive Hopf link \cite[Example~3.18]{galashin2024positroids}, the closure of $\sigma_1^2$, with two components, see Figure~\ref{fig:plabicHopf}. Thus
\[
 g_3(\Lambda_f)=g_4(\Lambda_f)
 =\frac{4-4+2}{2}=1,
\]
so by Theorem \ref{thm:standard-patch-link} neither $X$ nor any of its nonempty patches is standard toric. The computation below shows that this does not preclude an intrinsic, nonstandard torus structure on an individual patch.

\subsection*{The patch \texorpdfstring{$I=\{1,3\}$}{I=\{1,3\}}}

Here $t_{13}=[5,2,7,4]$, and factoring out the length-zero element $f$ gives the reduced word
\[
  t_{13}=f\,s_2s_0s_3s_1,\qquad \ell(t_{13})=4 .
\]
Every simple generator occurs at most once, so $[f,t_{13}]$ is the Boolean lattice $B_4$ and in particular is crown-free. Corollary \ref{cor:patch-main} therefore gives
\[
  \Pio_{f,13}\cong(\C^*)^4,
  \qquad R_{f,t_{13}}(q)=(q-1)^4 .
\]

This isomorphism is transparent in Pl\"ucker coordinates. On the chart $p_{13}\neq0$ the relation \eqref{eq:plucker-24} determines
\[
  \frac{p_{24}}{p_{13}}
  =\frac{p_{12}}{p_{13}}\cdot\frac{p_{34}}{p_{13}}
  +\frac{p_{14}}{p_{13}}\cdot\frac{p_{23}}{p_{13}},
\]
while the four cyclic ratios remain arbitrary and nonzero. Hence
\begin{equation}\label{eq:explicit-torus}
  \Pio_{f,13}\longrightarrow(\C^*)^4,
  \qquad
  x\longmapsto
  \left(\frac{p_{12}}{p_{13}},\frac{p_{23}}{p_{13}},
        \frac{p_{34}}{p_{13}},\frac{p_{14}}{p_{13}}\right)
\end{equation}
is an isomorphism: in the affine chart
\[
  \begin{pmatrix}1&a&0&b\\0&c&1&e\end{pmatrix},
  \qquad
  (p_{12},p_{13},p_{14},p_{23},p_{24},p_{34})
  =(c,\,1,\,e,\,a,\,ae-bc,\,-b),
\]
the four cyclic coordinates are $(c,a,-b,e)$, so their simultaneous nonvanishing cuts out $(\C^*)^4$ in $\C^4$. Since the standard orbit dimension is three, the four-dimensional torus structure in \eqref{eq:explicit-torus} is unexpected.

\subsection*{The patch \texorpdfstring{$I=\{1,2\}$}{I=\{1,2\}}}

By \eqref{eq:top-open-24} we have $\Pio_{f,12}=X$. Here $t_{12}=[5,6,3,4]$ and
\[
  t_{12}=f\,s_2s_3s_1s_2 .
\]
The subexpression $(s_2,s_3,e,s_2)$ has product $s_2s_3s_2$. Hence $[f,f\,s_2s_3s_2]$ is a $2$-crown subinterval of $[f,t_{12}]$. Equivalently,
\[
  R_{f,t_{12}}(q)
  =(q-1)^4+q(q-1)^2
  =(q-1)^2(q^2-q+1),
\]
so exactly one Deodhar stratum of codimension one occurs, in accordance with Lemma \ref{lem:codim-one}. Thus $\Pio_{f,12}$ is not an algebraic torus. This can be seen directly: in the chart $\begin{psmallmatrix}1&0&x&y\\0&1&z&w\end{psmallmatrix}$ one has $X=\{x\neq0,\ w\neq0,\ xw-yz\neq0\}\subset\C^4$, whose unit group modulo constants is free of rank three, generated by $x$, $w$ and $xw-yz$, rather than rank four. Over $\F_q$ this chart has $(q-1)^2\bigl(q^2-(q-1)\bigr)=(q-1)^2(q^2-q+1)$ points, matching $R_{f,t_{12}}$.

Rotating the example shows that the $p_{24}$-patch is also an unexpected four-torus, whereas all four cyclic Pl\"ucker patches coincide with $X$ and are not tori. Consequently no ``some patch if and only if every patch'' statement can hold for intrinsic torus structure, even though Proposition \ref{prop:standard-patch} gives exactly such a statement for standard toricity. It also shows why the global positroid link cannot detect the unexpected patch: the same nonslice Hopf link belongs to the ambient open positroid variety, while the crown condition changes with $I$.

\subsection{An unexpected Richardson torus}\label{subsec:unexpected-richardson}

The distinction between intrinsic toricity and sliceness also occurs for an entire open Richardson variety. In the example of Gorsky--Kim--Sherman-Bennett \cite[Example~2.11]{unexpected}, take
\[
 v=1324,\qquad w=4231\qquad\text{in }S_4,
\]
with reduced expression $w=s_1s_2s_3s_2s_1$. The unique distinguished subexpression for $v$ is $A_+=(e,e,e,s_2,e)$, so
\[
 \Rfin_{v,w}\cong(\C^*)^4,\qquad R_{v,w}(q)=(q-1)^4.
\]
This torus is unexpected, since standard diagonal orbits have dimension at most three.

The Richardson link
\[
 \Lambda_{v,w}=\widehat{\sigma_1\sigma_2\sigma_3\sigma_2\sigma_1\sigma_2^{-1}}
\]
is the mirror of the oriented Solomon link $L4a1\{0\}$ tabulated in LinkInfo \cite{linkinfo}; see \href{https://knotinfo.org/linkinfo/diagram_display.php?L4a1%7B0%7D}{its tabulated diagram}. Its two components have linking number $2$, so it is not smoothly slice. Since $\ell(w)-\ell(v)=4$ and $wv^{-1}=(1\,4)(2\,3)$, the slice-genus formula gives $g_4(\Lambda_{v,w})=(4-4+2)/2=1$.

\subsection{The Perko knot}\label{subsec:positive-genus}

We apply the genus--codimension formula to the Perko knot, recovering its known smooth slice genus from the standard torus action. We write positive words using the braid generators $\sigma_i$. Take
\[
 n=3,\qquad u=s_1,\qquad \beta=\sigma_2\sigma_1^2\sigma_2^2\sigma_1^3\sigma_2.
\]
The knot $L_{u,\beta}$ is the positive chirality of the Perko knot. Indeed, the mirror of the braid recorded by \href{https://katlas.org/wiki/10_161}{Knot Atlas for $10_{161}$} is
\[
\sigma_1^3\sigma_2\sigma_1^{-1}\sigma_2\sigma_1^2\sigma_2^2.
\]
Cyclically moving its first five letters to the end gives $\beta\sigma_1^{-1}$.

The PDS for $u$ selects position $8$, and the closure permutation is $\beta u^{-1}=(1\,2\,3)$. The solid graph is connected and has edge multiplicities
\[
 \{1,2\}:4,\qquad \{1,3\}:1,\qquad \{2,3\}:3.
\]
Thus $d=9-1=8$ and $c=\kappa=1$. Theorem~\ref{thm:genus-codimension} gives
\[
 C_{u,\beta}=8-3+1=6,\qquad g_4(L_{u,\beta})=\frac{C_{u,\beta}}2=3.
\]
This knot has a positive diagram but no positive braid presentation \cite[Example~4.2]{stoimenow2003positive}. Hence the inverse permutation factor is essential to this example.

\subsection{A nontrivial slice Richardson knot}\label{subsec:slice-richardson}

The Perko calculation exhibits positive slice genus and positive generic orbit codimension. We finish with the zero-genus case: a nontrivial slice knot whose Richardson variety is a single standard-torus orbit. This shows that the unlink conclusion of Theorem~\ref{thm:intro-positroid-link} does not extend to general Richardson links.

\begin{figure}
  \centering
  \begin{tikzpicture}[scale=1.5,line width=1pt]
    \draw (0,0)--(5,0);
    \draw (0,0.5)--(0.5,0.5)
          (1,0.5)--(4,0.5)
          (4.5,0.5)--(5,0.5);
    \draw (0,1)--(0.5,1)
          (1.5,1)--(4,1)
          (4.5,1)--(5,1);
    \draw (0,1.5)--(1,1.5)
          (2,1.5)--(4,1.5)
          (4.5,1.5)--(5,1.5);
    \draw (0,2)--(1.5,2)
          (2,2)--(3.5,2)
          (4.5,2)--(5,2);
    \draw (0,2.5)--(3.5,2.5)
          (4,2.5)--(5,2.5);
    \draw (0,3)--(5,3);
    \draw (0,3.5)--(5,3.5);

    \foreach \y/\strand in {
      0/1,0.5/2,1/3,1.5/4,2/5,2.5/6,3/7,3.5/8}
      \node[left] at (0,\y) {$\strand$};

    \foreach \x/\low/\high in {
      0.25/0/0.5,
      2.25/2/2.5,
      2.75/2.5/3,
      3.25/3/3.5,
      3.75/1/1.5,
      4.75/1/1.5,
      4.75/0/0.5} {
      \draw (\x,\low)--(\x,\high);
      \filldraw[fill=black] (\x,\low) circle (2pt);
      \filldraw[fill=white] (\x,\high) circle (2pt);
    }

    \foreach \leftx/\rightx/\low/\high in {
      0.5/1/0.5/1,
      1/1.5/1/1.5,
      1.5/2/1.5/2,
      3.5/4/2/2.5,
      4/4.5/1.5/2,
      4/4.5/0.5/1} {
      \draw[thick]
        (\leftx,\high) to[out=0,in=180] (\rightx,\low);
      \draw[white,line width=5pt]
        (\leftx,\low) to[out=0,in=180] (\rightx,\high);
      \draw[thick]
        (\leftx,\low) to[out=0,in=180] (\rightx,\high);
    }
  \end{tikzpicture}
  \caption{The 3D plabic graph for $v=14365278$ and $w=52734681$.}
  \label{fig:slice-not-unknot}
\end{figure}

Take
\[
 v=14365278,\qquad w=52734681\qquad\text{in }S_8.
\]
Reduced expressions are
\[
 v=s_2s_3s_4s_5s_4s_2,\qquad w=s_1s_2s_3s_4s_5s_6s_7s_5s_3s_4s_2s_3s_1.
\]
The corresponding 3D plabic graph appears in Figure~\ref{fig:slice-not-unknot}. We have $v\leq w$, $\ell(v)=6$, $\ell(w)=13$, and
\[
 wv^{-1}=(1\,5\,4\,2\,6\,3\,7\,8),\qquad R_{v,w}(q)=(q-1)^7.
\]
Thus $\Lambda_{v,w}$ is a knot, and
\[
 g_4(\Lambda_{v,w})=\frac{7-8+1}{2}=0.
\]
By Corollary~\ref{cor:finite-slice}, the open Richardson variety is a single standard-torus orbit; its generic orbit codimension is zero, in agreement with Theorem~\ref{thm:genus-codimension}.

A sequence of Reidemeister moves identifies the closure of $\beta(w)\beta(v)^{-1}$ with the knot $12n_{582}$ in KnotInfo \cite{knotinfo}; see \href{https://knotinfo.org/diagram_display.php?12n_582}{its tabulated diagram}. Its Alexander polynomial is
\[
 \Delta_{\Lambda_{v,w}}(t)=t^{-2}-2t^{-1}+3-2t+t^2,
\]
so it is nontrivial. In particular, $g_4=0<g_3$.

\bibliographystyle{alpha}
\bibliography{surface_deep_pts}

\end{document}